\numberwithin{equation}{section}
\newtheorem{defn}{{\bf Definition}}[section]
\newtheorem{eg}[defn]{{\bf Example}}
\newtheorem{lemma}[defn]{{\bf Lemma}}
\newtheorem{prop}[defn]{{\bf Proposition}}
\newtheorem{theo}[defn]{{\bf Theorem}}
\newtheorem{remark}[defn]{{\bf Remark}}
\begin{document}

\title{An algorithmic approach to construct crystallizations of $3$-manifolds from  presentations of fundamental groups}
\author{Biplab Basak}
\date{}
\maketitle
\vspace{-10mm}
\begin{center}

\noindent {\small Department of Mathematics, Indian Institute of Science, Bangalore 560\,012, India.}

\noindent {\small {\em E-mail address:} biplab10@math.iisc.ernet.in.}

\medskip

\date{September 13, 2015}
\end{center}

\hrule

\begin{abstract}
We have defined the weight of the pair $(\langle S \mid R \rangle, R)$ 
for a given presentation $\langle S \mid R \rangle$ of a group, where the number of generators is equal to the number of relations. We present an algorithm to construct crystallizations of 3-manifolds whose fundamental group has a presentation with two
generators and two relations. If the weight of $(\langle S \mid R \rangle, R)$ is $n$
then our algorithm constructs all the $n$-vertex crystallizations which yield $(\langle S \mid R \rangle, R)$. As an application, we have constructed some new crystallizations of 3-manifolds.

We have generalized our algorithm for presentations with three generators and a certain class of relations. For $m\geq 3$ and $m \geq n \geq k \geq 2$, our generalized algorithm gives a $2(2m+2n+2k-6+\delta_n^2 + \delta_k^2)$-vertex crystallization of
the closed connected orientable $3$-manifold $M\langle m,n,k \rangle$ having fundamental group $\langle x_1,x_2,x_3 \mid x_1^m=x_2^n=x_3^k=x_1x_2x_3 \rangle$.
These crystallizations are minimal and unique with respect to the given presentations. If `$n=2$'  or `$k\geq 3$ and $m \geq 4$' then our crystallization of $M\langle m,n,k \rangle$
is vertex-minimal for all the known cases.

\end{abstract}

\noindent {\small {\em MSC 2010\,:} Primary 57Q15. Secondary 05C15, 57N10, 57Q05.

\noindent {\em Keywords:} Pseudotriangulations of manifolds, Crystallizations of manifolds, Spherical and hyperbolic $3$-manifolds, Presentations of groups.
}

\medskip

\hrule

\section{Introduction}
For $d\geq 1$, a $(d+1)$-colored graph $(\Gamma,\gamma)$ represents a pure $d$-dimensional simplicial cell 
complex $\mathcal{K}(\Gamma)$ which has $\Gamma$ as dual graph. 
For a certain class of such graphs, the underlying space $|\mathcal{K}(\Gamma)|$ is a closed connected $d$-manifold. In such case, the $(d+1)$-colored graph $(\Gamma,\gamma)$
is called a crystallization of the $d$-manifold $|\mathcal{K}(\Gamma)|$. In \cite{pe74}, Pezzana showed existence of crystallizations for each 
closed connected PL manifold. In \cite{ga79b}, Gagliardi introduced an algorithm to find a presentation of the fundamental group of a closed connected
$d$-manifold $M$ from a crystallization of $M$. The components of the graph restricted over two colors give the relations, and
the components of the graph restricted over remaining colors give the generators of the presentation. In \cite{ep61}, Epstein proved that the fundamental group of a 
$3$-manifold has a presentation which has the number of relations is less than or equal to the number of generators. For a pair $(\langle S \mid R \rangle, R)$ with $\#S=\#R$,
we have defined its weight $\lambda(\langle S \mid R \rangle, R)$ in Definition \ref{def:presentation}.  If $(\Gamma,\gamma)$ is a crystallization of a closed connected orientable $3$-manifold and yields a presentation $(\langle S \mid R\rangle, R)$ then, from Lemma \ref{lemma:vertex}, $\#V(\Gamma) \geq
\lambda ( \langle S \mid R\rangle, R)$. Given a presentation 
$\langle S \mid R\rangle$ with two generators and two relations, our aim is to construct all  crystallizations which yield $(\langle S \mid R\rangle, R)$ and have $\lambda ( \langle S \mid R\rangle, R)$ vertices. 

For such a presentation $\langle S \mid R\rangle$ of a group, 
we have presented an algorithm 
(Algorithm 1 in Subsection \ref{subsec:algorithm1}) which gives all crystallizations such that the crystallizations yield the pair $(\langle S \mid R \rangle, R)$ and are minimal (cf. Definition \ref{def:minimal})
with respect to the pair $(\langle S \mid R \rangle, R)$. In particular, the algorithm determines whether such a crystallization exists or not. 

Let $M \not \cong L(p,q)$ be a closed connected orientable prime $3$-manifold and the fundamental group of $M$ has a presentation $\langle S \mid R\rangle$ with two generators and two relations. Using  Algorithm 1, we have constructed all possible crystallizations of $M$ which yield  $(\langle S \mid R \rangle, R)$ and are minimal with respect to the pair
$(\langle S \mid R \rangle, R)$ (cf. Theorem \ref{theorem:algorithm}). If $M \cong L(p,q)$ then the algorithm gives all such crystallizations of  $L(p,q^{\prime})$ for some $q^{\prime} \in \{1, \dots, p-1\}$.

As an application of Algorithm 1, we have constructed such crystallizations of some closed connected orientable $3$-manifolds, namely, $M\langle m,n,2 \rangle$ for all $m,n\geq 2$, 
lens spaces and a hyperbolic $3$-manifold. (Here $M\langle m,n,k \rangle$ is as in Subsection \ref{subsec:quaternion}.)

We have also generalized this algorithm for presentations with three generators and a certain class of relations (Algorithm 2 in Subsection \ref{subsec:algorithm2}). 
As an application of this, we have constructed a $4(m+2)$-vertex crystallization of the generalized quaternion space $S^3/Q_{4m}$
and a $4(m+n+k-3)$-vertex crystallization of the $3$-manifold $M\langle m,n,k \rangle$ for $m,n,k \geq 3$. (Here $S^3/Q_{4m}$ is as in Subsection \ref{subsec:quaternion}.) For $(m,n,k) \neq (3,3,3)$, these crystallizations are
vertex-minimal, when the number of vertices are at most $30$. In fact, there are no known crystallizations of these manifolds which have less number of vertices than our constructed ones (cf. Remark \ref{remark:minimal}). We have also constructed  a $(4m+4n-2)$-vertex crystallization of the $3$-manifold $M\langle m,n,2 \rangle$ for $m,n \geq 3$.
The crystallizations of the $3$-manifolds  $S^3/Q_{4m}$, $M\langle m,n,2 \rangle$ and $M\langle m,n,k \rangle$ for $m,n,k \geq 3$ are minimal and unique with respect to the given presentations (cf. Theorems \ref{theorem:q4n}, \ref{theorem:mn2} and  \ref{theorem:mnk}).

\section{Preliminaries}

\subsection{Colored graphs}
A multigraph $\Gamma = (V(\Gamma), E(\Gamma))$ is a  finite connected graph which can have multiple edges but no loops, where $V(\Gamma)$ and $E(\Gamma)$ denote the sets of
vertices and edges of $\Gamma$ respectively.
For $n\geq 1$, an $n$-path is a tree with $(n+1)$ distinct vertices and $n$ edges. 
If $a_i$ and $a_{i+1}$ are adjacent in an $n$-path for $1\leq i\leq n$ then the $n$-path is denoted by $P_{n}(a_1, a_2, \dots, a_{n+1})$.
For $n\geq 2$, an $n$-cycle is a closed path with $n$ distinct vertices and $n$ edges. 
If vertices $a_i$ and $a_{i+1}$ are adjacent in an $n$-cycle for $1\leq i\leq n$ (addition is modulo $n$) then the $n$-cycle 
is denoted by $C_n(a_1, a_2, \dots, a_n)$. By $kC_n$ we mean a graph consists of $k$ disjoint $n$-cycles. The disjoint union of the graphs $G$ and $H$ is denoted by $G \sqcup H$. A graph $\Gamma$ is called {\em $(d+1)$-regular} if the number of edges adjacent to each vertex is $(d+1)$. 

First we call $\Delta_d=\{0,1,\dots,d\}$ the color set.
An {\em edge coloring} with $(d+1)$ colors on the graph $\Gamma = (V(\Gamma), E(\Gamma))$ is a map $\gamma \colon E(\Gamma) \to \Delta_d$ such that $\gamma(e) \neq
\gamma(f)$ whenever $e$ and $f$ are adjacent (i.e., $e$ and $f$ are adjacent to a common vertex).
An $(d+1)$-colored graph is a pair $(\Gamma, \gamma)$ where  $\Gamma$ is a  multigraph and $\gamma$ is a 
edge coloring on the graph $\Gamma$ with $(d+1)$ colors. Two vertices are called $i$-adjacent to each other if they are joined by an edge of color $i$. 

Let $(\Gamma,\gamma)$ be a $(d+1)$-colored connected graph with color set $\Delta_d$. If $B \subseteq \Delta_d$ with $k$ elements then the
graph $(V(\Gamma), \gamma^{-1}(B))$ is a $k$-colored graph with coloring $\gamma|_{\gamma^{-1}(B)}$. This colored
graph is denoted by $\Gamma_B$. If $\Gamma_{\Delta_d \setminus \{c\}}$ is connected for all $c\in \Delta_d$ then  $(\Gamma,\gamma)$ is called {\em contracted}.
For standard terminology on graphs see \cite{bm08}.

\subsection{Spherical and hyperbolic 3-manifolds}

A  $3$-manifold $M$ is called a {\em spherical $3$-manifold} if $M \cong S^3/\Gamma$ where $\Gamma$ is a finite subgroup of $SO(4)$ 
acting freely by rotations on the $3$-sphere $S^3$. Therefore, spherical $3$-manifolds are prime, orientable and closed. 
Spherical $3$-manifolds are sometimes called {\em elliptic $3$-manifolds} or {\em Clifford-Klein $3$-manifolds}. In \cite[Chapter 3]{th80}, Thurston
conjectured that a closed 3-manifold with finite fundamental group is spherical, which is also known as {\em elliptization conjecture}.
In \cite{pe03}, Perelman proved  the elliptization conjecture.

Consider the 3-sphere $S^{\hspace{.2mm}3} = \{(z_1, z_2) \in \mathbb{C}^{\hspace{1mm}2} \, : \, |z_1|^2 + |z_2|^2
= 1\}$. Let $p$ and $q$ be relatively prime integers. Then the action of $\mathbb{Z}_p = \mathbb{Z}/p\mathbb{Z}$
on $S^3$ generated by $[1].(z_1, z_2) := (e^{2\pi i/p} \cdot z_1, e^{2\pi iq/p} \cdot z_2)$ is free and hence
properly discontinuous. Therefore the quotient space $L(p, q) := S^{\hspace{.3mm}3}/\mathbb{Z}_p$ is a 3-manifold
whose fundamental group is isomorphic to $\mathbb{Z}_p$. The 3-manifolds $L(p, q)$ are called the lens spaces. It
is a classical theorem of Reidemeister that $L(p, q^{\hspace{.3mm}\prime})$ is homeomorphic to $L(p, q)$ if and
only if $q^{\hspace{.3mm}\prime} \equiv \pm q^{\pm 1}$ (mod $p$).

A $3$-manifold is called a {\em hyperbolic $3$-manifold} if it is equipped with a complete Riemannian metric of constant sectional curvature $-1$.
In other words, it is the quotient of three-dimensional hyperbolic space by a subgroup of hyperbolic isometries acting freely and properly discontinuously.
From \cite[Theorem 2.2]{afw13}, we know the following.

\begin{prop} \label{prop:hyperbolic}
Let M and N be two orientable, closed, prime $3$-manifolds and let $\varphi : \pi_1(M, \ast) \to \pi_1(N, \ast)$ be an isomorphism. 
\begin{enumerate}[{\rm (i)}]
\item If M and N are not lens spaces then M and N are homeomorphic.
\item If M and N are not spherical then there exists a homeomorphism which induces $\varphi$.
\end{enumerate}

\end{prop}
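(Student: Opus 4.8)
The plan is to split according to whether the common fundamental group $\pi:=\pi_1(M,\ast)\cong\pi_1(N,\ast)$ is finite or infinite, invoking Geometrization and the appropriate rigidity statement in each regime. Suppose first that $\pi$ is finite. By the elliptization conjecture (Perelman, as recalled above) both $M$ and $N$ are then spherical, say $M\cong S^3/\Gamma_M$ and $N\cong S^3/\Gamma_N$ with $\Gamma_M,\Gamma_N$ finite subgroups of $SO(4)$ acting freely, and $\varphi$ identifies $\Gamma_M$ with $\Gamma_N$ as abstract groups. I would then appeal to the classification of spherical space forms (Hopf, Seifert--Threlfall, with the orthogonal-representation analysis completed by Wolf): for each such group the free orthogonal action is essentially unique up to the relevant equivalence, \emph{except} when $\Gamma$ is cyclic, where the residual freedom in the weights produces the lens spaces. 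This is precisely the Reidemeister classification recalled above, $L(p,q')\cong L(p,q)$ iff $q'\equiv\pm q^{\pm1}\pmod p$. Hence, once lens spaces are excluded, isomorphic $\pi$ forces $M\cong N$, which is part (i) in the finite case; the lens-space examples simultaneously explain why part (ii) must exclude the spherical case, since a given automorphism of $\mathbb{Z}_p$ need not be induced by a self-homeomorphism.

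Now suppose $\pi$ is infinite. If $M\cong S^2\times S^1$ then $\pi\cong\mathbb{Z}$, the only prime orientable closed $3$-manifold with this group is $S^2\times S^1$ itself, and both automorphisms $\pm1$ of $\mathbb{Z}$ are realized by self-homeomorphisms, so both parts hold trivially. Otherwise $M$ is irreducible with infinite $\pi_1$; by the sphere theorem $\pi_2(M)=0$, and since the universal cover is then a simply connected open $3$-manifold with vanishing $\pi_2$, it is contractible, so $M$ is aspherical, i.e.\ a $K(\pi,1)$, and likewise $N$. Because a homomorphism between fundamental groups of aspherical complexes is induced by a map that is unique up to homotopy, $\varphi$ is induced by a homotopy equivalence $f\colon M\to N$.

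The final and hardest step is to promote this homotopy equivalence to a homeomorphism inducing $\varphi$, i.e.\ topological rigidity of closed aspherical $3$-manifolds. I would decompose $M$ and $N$ along their canonical JSJ tori into Seifert-fibered and hyperbolic pieces and match corresponding pieces: Mostow--Prasad rigidity realizes an isomorphism of the hyperbolic pieces' fundamental groups by an isometry, while Waldhausen's theorem for Haken manifolds and Scott's results for Seifert-fibered pieces handle the remaining blocks, and one then glues the piecewise homeomorphisms along the splitting tori. I expect the main obstacle to be exactly this assembly: ensuring the piecewise homeomorphisms agree on the JSJ tori and that they collectively induce the \emph{given} $\varphi$ rather than merely some isomorphism, all resting on the deep Geometrization input. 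The resulting homeomorphism realizes $\varphi$, proving part (ii), and in particular yields $M\cong N$ without the lens-space caveat, completing part (i).
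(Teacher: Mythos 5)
The paper does not prove this statement at all: it is quoted verbatim from \cite[Theorem 2.2]{afw13} (Aschenbrenner--Friedl--Wilton), so there is no internal argument to compare against. What you have written is essentially a correct reconstruction of the standard proof that the cited survey assembles. Your division into the finite and infinite $\pi_1$ cases, with elliptization plus the classification of spherical space forms (unique free orthogonal action for non-cyclic groups, Reidemeister's $L(p,q')\cong L(p,q)$ iff $q'\equiv\pm q^{\pm1}$ for the cyclic case) handling the finite side, and the chain ``prime, not $S^2\times S^1$ $\Rightarrow$ irreducible $\Rightarrow$ (sphere theorem) aspherical $\Rightarrow$ homotopy equivalence inducing $\varphi$ $\Rightarrow$ (topological rigidity via Mostow, Waldhausen, and the Seifert/JSJ analysis) homeomorphism'' handling the infinite side, is exactly the architecture of the proof in the literature. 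Two small points to be aware of: first, you should make explicit that ``not spherical'' is equivalent to ``infinite $\pi_1$'' only because of elliptization, which you use implicitly when passing to the aspherical case in part (ii); second, the assembly step you flag as the main obstacle (matching the piecewise homeomorphisms across JSJ tori so that the \emph{given} $\varphi$ is induced) is genuinely where the heaviest input lies -- it is the Borel-type rigidity for closed aspherical $3$-manifolds, due to Waldhausen in the Haken case and completed in general by geometrization together with work of Scott and Turaev -- so a fully rigorous write-up would need to cite those results rather than re-derive them. As a blind reconstruction of a quoted theorem, your proposal is sound.
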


\subsection{Weights of  presentations of groups}\label{subsec:presentation}

Given a set $S$, let $F(S)$ denote the free group generated by $S$. So, any element $w$ of $F(S)$ is of the form
$w = x_1^{\varepsilon_1} \cdots x_m^{\varepsilon_m}$, where $x_1, \dots, x_m\in S$ and $\varepsilon_i = \pm 1$
for $1\leq i\leq m$ and $(x_{j+1}, \varepsilon_{j+1}) \neq (x_j, -\varepsilon_j)$ for $1\leq j \leq m-1$. For $R
\subseteq F(S)$, let $N(R)$ be the smallest normal subgroup of $F(S)$ containing $R$. Then, the quotient group
$F(S)/N(R)$ is denoted by $\langle S \, | \, R \rangle$. For a presentation $P=\langle S \, | \, T \rangle$ with  $N(T) = N(R)$,
the pair $(P,R)$ denotes the presentation $P$ with the relation set $R$. So, if $T \neq R$ and $N(T) = N(R)$ then $\langle S \, | \, T \rangle = \langle S \, | \, R
\rangle$ but as a pair $(\langle S \, | \, T \rangle,T) \neq (\langle S \, | \, R\rangle, R)$.  Two elements $w_1, w_2\in F(S)$ are said to be {\em
independent} (resp., {\em dependent}) if $N(\{w_1\}) \neq N(\{w_2\})$ (resp., $N(\{w_1\}) = N(\{w_2\})$).

For a finite subset $R$ of $F(S)$, let
\begin{align}\label{R}
\overline{R} := \{w \in N(R) \, : \, N((R \setminus \{r\}) \cup \{w\}) = N(R) \mbox{ for each } r\in R\}.
\end{align}
Observe that $\overline{\emptyset} = \emptyset$ and if $R\neq \emptyset$ is a finite set then $w :=\prod_{r\in R}
r \in \overline{R}$ and hence $\overline{R} \neq \emptyset$.

For $w = x_1^{\varepsilon_1}\cdots x_m^{\varepsilon_m} \in F(S)$, $m\geq 1$, let

\begin{align}
\varepsilon(w) :=
\left\{ \begin{array}{lcl}
0 & \mbox{if} & m = 1, \\
|\varepsilon_1-
\varepsilon_2| + \cdots + |\varepsilon_{m-1}-\varepsilon_m| +
|\varepsilon_m - \varepsilon_1| & \mbox{if}
& m \geq 2.
\end{array}\right. \nonumber
\end{align}
Consider the map $\lambda \colon F(S) \to \mathbb{Z}^{+}$ define inductively as follows.
\begin{eqnarray}\label{l(w)}
\lambda(w) := \left\{ \begin{array}{lcl}
2 & \mbox{if} & w = \emptyset, \\
2m-\varepsilon(w) & \mbox{if}
& w = x_1^{\varepsilon_1}\cdots
x_m^{\varepsilon_m}, \, (x_m, \varepsilon_m) \neq (x_1, -\varepsilon_1), \\
\lambda(w^{\hspace{.1mm}\prime})
& \mbox{if} & w = x_1^{\varepsilon_1} w^{\hspace{.1mm}\prime} x_1^{-\varepsilon_1}.
\end{array}\right.
\end{eqnarray}

Since $|\varepsilon_i - \varepsilon_j|=0$ or 2, $\varepsilon(w)$
is an even integer and hence $\lambda(w)$ is
also even. For $w\in F(S)$, $\lambda(w)$ is said to be the {\em weight} of $w$. Observe that $\lambda(w_1w_2) =
\lambda(w_2w_1)$ for $w_1, w_2\in F(S)$. In \cite{ep61}, Epstein proved that the fundamental group of a 3-manifold has a presentation where
the number of relations is less than or equal to the number of generators. Here, we are interested in those presentations $\langle S \mid R\rangle$ for which $\#S=\#R<\infty$.

\begin{defn} \label{def:presentation}
{\rm
 Let $S = \{x_1, \dots, x_s\}$ and $R= \{r_1, \dots, r_s\} \subseteq F(S)$. Let $r_{s+1}$ be an
element in $\overline{R}$ of minimum weight. Then, the number
$$\lambda(\langle S \mid R\rangle, R) := \lambda(r_1) +
\cdots + \lambda(r_s) + \lambda(r_{s+1}).$$
is called the  {\em weight} of the pair $(\langle S \mid R\rangle, R)$.}
\end{defn}

Let $w=\alpha_1^{\varepsilon_1}\alpha_2^{\varepsilon_2} \cdots \alpha_m^{\varepsilon_m}  \in  F(S:=\{x_1, \dots ,x_s\})$ where
$\varepsilon_i \in\{+1,-1\}$ for $1\leq i \leq m$. Then, we define 

\begin{enumerate}[{\rm (i)}]
\item $w_{ij}^{(2)}:=$ total number of appearances of $x_i^{-1}x_j$ and $x_j^{-1}x_i$ in $\alpha_m^{\varepsilon_m}\alpha_1^{\varepsilon_1}\alpha_2^{\varepsilon_2} \cdots \alpha_m^{\varepsilon_m}$, for $x_i, x_j \in S$ and $1\leq i<j \leq s$,
\item $w_{ij}^{(3)}:=$ total number of appearances of $x_ix_j^{-1}$ and $x_jx_i^{-1}$ in $\alpha_m^{\varepsilon_m}\alpha_1^{\varepsilon_1}\alpha_2^{\varepsilon_2} \cdots \alpha_m^{\varepsilon_m}$, for $x_i, x_j \in S$ and $1\leq i<j \leq s$,
\item $w_{i\,(s+1)}^{(2)}:=$ total number of appearances of $x_i^{-1}x_j^{-1}$ and $x_jx_i$ in $\alpha_m^{\varepsilon_m}\alpha_1^{\varepsilon_1}\alpha_2^{\varepsilon_2} \cdots \alpha_m^{\varepsilon_m}$, for $x_i, x_j \in S$ and $1 \leq i \neq j \leq s$,
\item $w_{i \,(s+1)}^{(3)}:=$ total number of appearances of $x_j^{-1}x_i^{-1}$ and $x_ix_j$ in $\alpha_m^{\varepsilon_m}\alpha_1^{\varepsilon_1}\alpha_2^{\varepsilon_2} \cdots \alpha_m^{\varepsilon_m}$, for $x_i, x_j \in S$ and $1 \leq i \neq j \leq s$. 
\end{enumerate}
Observe that $\lambda(w)= \sum w_{ij}^{(c)}$ is the sum over $1\leq i < j \leq s+1$ and $2 \leq c \leq 3$.

\subsection{Binary polyhedral groups and generalized quaternion spaces}\label{subsec:quaternion}
A group is called a {\em binary polyhedral group} if it has a presentation of the form $\langle x_1,x_2,x_3 \mid x_1^{m} = x_2^{n} = x_3^{k}=x_1x_2x_3 \rangle$ 
for some integer $m,n, k \geq 2$. This group is denoted by $\langle m,n,k \rangle$. This group is known to be fundamental group of the $3$-manifold $\mathcal{L}/(m,n,k)$, where $\mathcal{L}$ is the connected Lie group of orientation preserving
isometries of a plane $P$ (cf. \cite{li95, mil75}) and $(m,n,k)= \langle x_1,x_2 ,x_3 \mid x_1^{m} = x_2^{n} = x_3^{k}= 1\rangle$.
Since $\langle m,n,k \rangle$ is not a free product and not isomorphic to $\mathbb{Z}_p$, any $3$-manifold which has fundamental group $\langle m,n,k \rangle$,
is prime and not homeomorphic to lens space. Therefore, by Proposition \ref{prop:hyperbolic}, any two closed connected orientable manifolds with same
fundamental group $\langle m,n,k \rangle$, are homeomorphic. In this article, we will denote such a $3$-manifold by $M\langle m,n,k \rangle$. Observe that
$M\langle \tilde{m},\tilde{n},\tilde{k} \rangle \cong M\langle m,n,k \rangle$ for every permutation $\tilde{m}\tilde{n}\tilde{k}$ of $mnk$. Thus,
we can assume that $m \geq n \geq k$. Clearly, the group $\langle x_1,x_2 \mid x_1^{m}x_2^{-n},x_2^{nk-n-k}x_1^{-k}, x_1x_2x_1^{-1}x_2^{-1}\rangle$ 
is isomorphic to the abelianized group of $\langle m,n,k\rangle$. Therefore, $(m,n,k)=(5,3,2)$ or $(7,3,2)$ implies that abelianization of $\langle m,n,k \rangle$ is trivial.
Thus, $M\langle 5,3,2 \rangle$ and $M\langle 7,3,2 \rangle$ are homology spheres, in fact, $M\langle 5,3,2 \rangle$ is the Poincar\'{e} homology sphere.
Since $\langle m,2,2 \rangle ~ (\cong Q_{4m})$, $P_{24}:=\langle 3,3,2 \rangle$, $P_{48}:=\langle 4,3,2 \rangle$ and $P_{120}:= \langle 5,3,2 \rangle$ are finite groups, by the proof of
elliptization conjecture of Perelman, $M\langle m,n,k \rangle$ is spherical, i.e.,  $M\langle m,n,k \rangle \cong S^3/\langle m,n,k \rangle$ for these groups  $\langle m,n,k \rangle$. It is not difficult to prove that, the abelianization of $\langle m,n,k \rangle \cong \mathbb{Z} \oplus H$ for some group $H$ if and only if
$(m,n,k)=(6,3,2)$, $(4,4,2)$ or $(3,3,3)$. Therefore, in these three cases,
the $3$-manifold $M\langle m,n,k \rangle$ has a handle and in all the other cases, $M\langle m,n,k \rangle$ is handle-free.

A group is called a {\em generalized quaternion group} or {\em dicyclic group} if it has a presentation of the form
$\langle x_1,x_2 \mid x_1^{2m} = x_2^4 = 1, x_1^m = x_2^2, x_2^{-1}x_1x_2 = x_1^{-1}\rangle$ for some integer $m \geq 2$. 
This group has order $4m$ and is denoted by $Q_{4m}$.

\noindent{\em Claim:} For $m \geq 2$, $Q_{4m}$ has a presentation
$\langle S \mid R \rangle$, where $S =\{x_1,x_2, x_3\}$ and $R=\{x_1^{m-1}x_3^{-1}x_2^{-1}, x_2x_1^{-1}x_3^{-1}, x_3x_2^{-1}x_1^{-1}\}$.

Observe that $x_2x_1^{-1}x_3^{-1}=1=x_3x_2^{-1}x_1^{-1}$ implies $x_2x_1^{-1}= x_3 =x_1x_2$, i.e., $x_1^{-1}=x_2^{-1} x_1x_2$.
Again, $x_1^{m-1}x_3^{-1}x_2^{-1}=1$ and $x_2x_1^{-1}x_3^{-1}=1$ implies that $x_1^m=x_2^2$. 
Now, $x_2=x_1x_2x_1=x_1(x_1x_2x_1)x_1=x_1^2x_2x_1^2=\cdots=x_1^mx_2x_1^m=x_2^2x_2x_2^2=x_2^5$ implies
$x_2^4=1$. Since $x_1^m=x_2^2$, $x_1^{2m}=x_2^4=1$. Thus, $\langle S \mid R \rangle \cong \langle x_1,x_2 \mid x_1^{2m} = x_2^4 = 1, x_1^m = x_2^2, x_2^{-1}x_1x_2 = x_1^{-1}
\rangle$. 
This proves the claim.

The $3$-manifold $M\langle m,2,2 \rangle$ is called {\em generalized quaternion space}.
Then, by the proof of elliptization conjecture of Perelman, $M\langle m,2,2 \rangle$ is spherical and homeomorphic to $S^3/Q_{4m}$.

\subsection{Crystallizations} \label{crystal}

A CW-complex $X$ is said to be {\em regular} if the attaching maps which define the incidence structure of $X$ are homeomorphisms. Given a regular CW-complex $X$, let ${\mathcal X}$ be the set of all closed cells of $X$ together with the empty set. Then, ${\mathcal X}$ is a poset, where the partial ordering is the set inclusion. This poset ${\mathcal X}$ is said to be the {\em face poset} of $X$. Clearly, if $X$ and $Y$ are two finite regular CW-complexes with isomorphic face posets then $X$ and $Y$ are homeomorphic. A regular CW-complex $X$ is said to be {\em simplicial} if the boundary of each cell in $X$ is isomorphic (as a poset) to the boundary of a simplex of same dimension. 
A {\em simplicial cell complex} $K$ of dimension $d$ is a poset, isomorphic to the face poset ${\mathcal X}$ of a $d$-dimensional simplicial CW-complex $X$. The topological space $X$ is called the {\em geometric carrier} of $K$ and is also denoted by $|K|$. If a topological space $M$ is homeomorphic to $|K|$, then 
$K$ is said to be a {\em pseudotriangulation} of $M$. 

Let $K$ be a simplicial cell complex with partial ordering $\leq$. If $\beta\leq \alpha\in K$ 
then we say $\beta$ is a {\em face} of $\alpha$. If all the maximal cells of a $d$-dimensional simplicial cell complex $K$ are $d$-cells then it is called {\em pure}. 
Maximal cells in a pure simplicial cell complex $K$ are called the {\em facets} of $K$.
The 0-cells in a simplicial cell complex $K$ are said to be the {\em vertices} of $K$. If $u$ is a face of $\alpha$ and $u$ is a vertex then we say $u$ 
is a {\em vertex of} $\alpha$. 
Clearly, a $d$-dimensional simplicial cell complex $K$ has at least $d+1$ vertices. If  a $d$-dimensional simplicial cell complex $K$ has exactly $d+1$ 
vertices then $K$ is called {\em contracted}.

Let $K$ be a pure $d$-dimensional simplicial cell complex. Consider the graph 
$\Lambda(K)$ whose vertices are the facets of $K$ and  whose edges are the ordered pairs 
$(\{\sigma_1, \sigma_2\}, \gamma)$, where $\sigma_1$, $\sigma_2$ are facets, $\gamma$ is a $(d-1)$-cell and is a common face of $\sigma_1$, $\sigma_2$. The graph $\Lambda(K)$ is said to be the {\em dual graph} of $K$. Observe that $\Lambda(K)$ is in general a multigraph without loops. On the other hand, for $d\geq 1$, 
if $(\Gamma, \gamma)$ is a $(d+1)$-colored graph with color set $\Delta_d = \{0, \dots, d\}$ then we define a $d$-dimensional simplicial cell complex ${\mathcal K}(\Gamma)$ as follows. For each $v\in V(\Gamma)$, we take a $d$-simplex $\sigma_v$ and label its vertices by $0, \dots, d$. If $u, v \in V(\Gamma)$ are joined by an edge $e$ and $\gamma(e) =  {i}$, then we identify the $(d-1)$-faces of $\sigma_u$ and $\sigma_v$ opposite to the vertices labeled by ${i}$, so that equally labeled vertices are identified together. Since there is no identification within a $d$-simplex, this gives a simplicial CW-complex $W$ of dimension $d$. So, the face poset (denoted by ${\mathcal K}(\Gamma)$) of $W$ is a pure $d$-dimensional simplicial cell complex. We say that $(\Gamma, \gamma)$ {\em represents} the simplicial cell complex ${\mathcal K}(\Gamma)$. Clearly, the number of $i$-labeled vertices of ${\mathcal K}(\Gamma)$ is equal to the number of components of $\Gamma_{\Delta_d\setminus\{{i}\}}$ for each $ {i}\in \Delta_d$. Thus, the simplicial cell complex ${\mathcal K}(\Gamma)$ is contracted if and only if $\Gamma$ is contracted  (cf. \cite{fgg86}).

A {\em crystallization} of a connected closed $d$-manifold $M$ is a $(d+1)$-colored contracted graph 
$(\Gamma, \gamma)$ such that the simplicial cell complex ${\mathcal K}(\Gamma)$ is a pseudotriangulation of $M$. Thus, if $(\Gamma, \gamma)$ is a 
crystallization of a $d$-manifold $M$ then the number of vertices in ${\mathcal K}(\Gamma)$ is $d+1$. On the other hand, if $K$ is a contracted pseudotriangulation 
of $M$ then the dual graph $\Lambda(K)$ gives a crystallization of $M$. Clearly, if $(\Gamma, \gamma)$ is a crystallization of a closed $d$-manifold $M$ then, either 
$\Gamma$ has two vertices (in which case $M$ is $S^d$) or the number of edges between two vertices is at most $d-1$. 
In \cite{pe74}, Pezzana showed the following.

\begin{prop}[Pezzana] \label{prop:pezzana74}
Every connected closed PL manifold admits a crystallization.
\end{prop}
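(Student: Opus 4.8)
The plan is to begin with an arbitrary triangulation of $M$, pass to its (properly colorable) barycentric subdivision, read off the associated $(d+1)$-colored graph, and then repeatedly simplify that graph until it is contracted; by the correspondence between contracted colored graphs and crystallizations established above, the end product is the desired crystallization.

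First I would produce a $(d+1)$-colored graph representing $M$. Since $M$ is a closed connected PL $d$-manifold, it admits a finite simplicial triangulation $K_0$. Its barycentric subdivision $K_0^{\prime}$ is again a triangulation of $M$, and it carries a canonical proper vertex-coloring: color each barycenter of a $j$-dimensional simplex by $j \in \Delta_d$. Every facet of $K_0^{\prime}$ then has exactly one vertex of each color, so the dual graph $\Lambda(K_0^{\prime})$ inherits an edge-coloring $\gamma_0$ (assign to the edge dual to a shared $(d-1)$-face the color of the vertex that face omits). This gives a $(d+1)$-colored graph $(\Gamma_0, \gamma_0)$ with $\mathcal{K}(\Gamma_0) \cong K_0^{\prime}$, whence $|\mathcal{K}(\Gamma_0)| \cong M$. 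In general $(\Gamma_0, \gamma_0)$ is not contracted: for each color $c$, the number of components of $(\Gamma_0)_{\Delta_d \setminus \{c\}}$ equals the number of color-$c$ vertices of $K_0^{\prime}$, which can exceed one.

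Second I would reduce to the contracted case by cancelling dipoles. Recall that a cancellable $1$-dipole of color $c$ is a pair of vertices joined by a single $c$-colored edge and lying in distinct components of $\Gamma_{\Delta_d \setminus \{c\}}$; deleting the two vertices and welding, color by color, the $d$ remaining pairs of edge-ends produces a new $(d+1)$-colored graph representing the same manifold while strictly decreasing the number of vertices. The key observation is that non-contractedness forces the existence of such a dipole: if $(\Gamma, \gamma)$ is connected but $\Gamma_{\Delta_d \setminus \{c\}}$ is disconnected, then some $c$-colored edge must join two of its components, and that edge is precisely a cancellable $1$-dipole. Cancelling it merges those two components, so the number of color-$c$ vertices of the represented complex drops by one. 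Since $\#V(\Gamma)$ is finite and strictly decreases at each step, the procedure halts; it can only halt at a graph in which every $\Gamma_{\Delta_d \setminus \{c\}}$ is connected, i.e. at a contracted graph $(\Gamma_\ast, \gamma_\ast)$ with $|\mathcal{K}(\Gamma_\ast)| \cong M$. By definition, $(\Gamma_\ast, \gamma_\ast)$ is then a crystallization of $M$.

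The main obstacle is verifying that a dipole cancellation does not change the PL-homeomorphism type of the represented manifold, which is the combinatorial heart of the argument. One must track how $\mathcal{K}(\Gamma)$ transforms under the move and confirm that the two simplicial cell complexes have PL-homeomorphic geometric carriers; intuitively a cancellable $1$-dipole corresponds to a pair of facets meeting along a $(d-1)$-face and forming a collapsible product region, so removing it is a PL move. Once this invariance is established, the remaining points — that the barycentric coloring is proper, that a cross-component $c$-colored edge exists whenever $\Gamma_{\Delta_d \setminus \{c\}}$ is disconnected, and that the strictly decreasing vertex count forces termination — are routine.
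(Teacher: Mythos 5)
The paper does not prove this proposition at all: it is quoted from Pezzana \cite{pe74} as a known result, so there is no internal proof to compare against. Your argument is the classical one (barycentric subdivision gives a vertex-coloring by dimension, hence a $(d+1)$-colored dual graph representing $M$; then cancel $1$-dipoles until the graph is contracted), and the outline is sound. The observations you call routine really are routine: the barycentric coloring is proper because a facet of $K_0^{\prime}$ is a flag of simplices of pairwise distinct dimensions; connectedness of $\Gamma$ forces a $c$-colored edge across two components of $\Gamma_{\Delta_d \setminus \{c\}}$ whenever the latter is disconnected, and such an edge is automatically the \emph{only} edge between its endpoints (a second edge would have a color $\neq c$ and would merge the components), so it is a genuine $1$-dipole; and the vertex count strictly decreases, so the process terminates.

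The one genuine gap is exactly the step you flag but do not carry out: that cancelling a $1$-dipole leaves the PL-homeomorphism type of $|\mathcal{K}(\Gamma)|$ unchanged. This is not a formality — it is the entire content of the theorem, and it is where all the work lives (it is the Ferri--Gagliardi dipole lemma). The standard argument identifies the two $c$-labeled vertices $P,Q$ of $\mathcal{K}(\Gamma)$ corresponding to the two components joined by the dipole, shows that their closed stars meet precisely in the $(d-1)$-cell dual to the dipole edge, and that the union of these stars is a $d$-ball on which the cancellation acts by collapsing, so the quotient is PL-homeomorphic to the original carrier; one must also check that the new graph is still connected and still represents a pseudotriangulation of the same space. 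Without this lemma your induction has no content, since a priori each cancellation could change the manifold. So the proposal is a correct and standard \emph{plan}, but as written it reduces Pezzana's theorem to an unproved lemma of essentially equal depth; to be a complete proof it must include the dipole-invariance argument (or an explicit citation of it as an external result, which is in effect what the paper itself does for the whole proposition).
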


Thus, every connected closed PL $d$-manifold has a {\em contracted} pseudotriangulation, i.e., a
pseudotriangulation with $d+1$ vertices. From  \cite{cgp80}, we know the following.

\begin{prop}[Cavicchioli-Grasselli-Pezzana] \label{prop:ca-ga-pe80}
Let $(\Gamma,\gamma)$ be a crystallization of a PL manifold $M$. Then $M$ is orientable if and only if 
$\Gamma$ is bipartite.
\end{prop}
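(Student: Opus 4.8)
The plan is to translate orientability of $M$ into a coherent-orientation condition on the facets of $\mathcal{K}(\Gamma)$ and then identify that condition with bipartiteness of $\Gamma$. Since $(\Gamma,\gamma)$ is a crystallization, $\mathcal{K}(\Gamma)$ is a pseudotriangulation of the closed manifold $M$ and $\Gamma$ is $(d+1)$-regular, so every $(d-1)$-cell is a face of exactly two facets. Consequently $M$ is orientable if and only if the $d$-simplices $\sigma_v$ can be oriented so that, across each shared $(d-1)$-face, the two induced boundary orientations are opposite.

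First I would use the canonical labeling carried by the construction: the vertices of each facet $\sigma_v$ are labeled $0,1,\dots,d$, and I give $\sigma_v$ the \emph{standard} orientation $[0,1,\dots,d]$ read off from these labels. The decisive computation compares, for an edge $e=\{u,v\}$ of color $i$, the boundary orientations that the standard orientations of $\sigma_u$ and $\sigma_v$ induce on the identified face opposite the vertex labeled $i$. By the usual boundary convention, $[0,\dots,d]$ induces $(-1)^i[0,\dots,\widehat{i},\dots,d]$ on that face, for \emph{both} simplices; because the color-$i$ edge identifies equally labeled vertices, the gluing is the identity permutation on the labels of the common face, so the two induced orientations are $(-1)^i$ times the \emph{same} ordered face and hence agree rather than oppose.

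Next I would turn this into a sign problem. Assigning to each $v\in V(\Gamma)$ a sign $\varepsilon_v\in\{+1,-1\}$ and orienting $\sigma_v$ by $\varepsilon_v[0,\dots,d]$, the previous paragraph shows that the two induced orientations on the shared face of an edge $\{u,v\}$ are opposite exactly when $\varepsilon_u\varepsilon_v=-1$. Thus a coherent orientation of $M$ exists if and only if there is a function $v\mapsto\varepsilon_v$ with $\varepsilon_u\neq\varepsilon_v$ for every edge $\{u,v\}$; several edges between the same pair of vertices merely repeat one and the same constraint and so cause no trouble. This is precisely a proper $2$-coloring of the multigraph $\Gamma$.

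Finally I would invoke the elementary graph-theoretic fact that a graph admits a proper $2$-coloring if and only if it is bipartite (equivalently, has no odd cycle). Stringing the steps together, $M$ is orientable $\iff$ the sign assignment exists $\iff$ $\Gamma$ is bipartite, which is the claim. The main obstacle is the orientation bookkeeping of the second paragraph: one must keep careful track of the boundary-orientation convention and of the parity of the gluing permutation to be certain that the label-preserving identifications induce \emph{agreeing} (not opposite) orientations across a shared face, since an error of sign there would flip the final equivalence. Once this sign is pinned down, the remaining steps are routine.
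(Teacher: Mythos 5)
The paper does not prove this statement: it is quoted from Cavicchioli--Grasselli--Pezzana \cite{cgp80} as a known result, so there is no internal proof to compare against. Your argument is the standard proof of this classical fact and is correct: the sign computation in your second paragraph is right (with the convention $\partial[0,\dots,d]=\sum_i(-1)^i[0,\dots,\widehat{i},\dots,d]$, both facets induce $(-1)^i$ times the \emph{same} ordered face on the common $(d-1)$-cell, since the gluing preserves labels), so the chain $\sum_v \varepsilon_v\sigma_v$ is a cycle exactly when $\varepsilon_u=-\varepsilon_v$ across every edge, and for a closed connected manifold such a cycle exists iff $M$ is orientable; this is a proper $2$-coloring of the connected multigraph $\Gamma$, i.e.\ bipartiteness.
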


Let $\Delta_d=\{0, \dots, d\}$ be the color set of a $(d+1)$-colored graph $(\Gamma, \gamma)$. For $0\leq i \neq j\leq d$, $g_{ij}$ denote the number of connected components of the graph $\Gamma_{\{i,j\}}$. In \cite{ga79a}, Gagliardi proved the following.

\begin{prop}[Gagliardi] \label{prop:gagliardi79a}
Let $(\Gamma,\gamma)$ be a contracted $4$-colored graph with the color set $\Delta_3$. Then, $(\Gamma,\gamma)$ is a
crystallization of a connected closed $3$-manifold if and only if
\begin{enumerate}[{\rm (i)}]
\item
$g_{ij}=g_{kl}$ for  $\{i,j,k,l\}=\Delta_3$, and
\item $g_{01}+g_{02}+g_{03}=2+ \frac{\#V(\Gamma)}{2}$.
\end{enumerate}
\end{prop}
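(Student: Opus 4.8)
The plan is to convert the global requirement ``$\mathcal{K}(\Gamma)$ is a $3$-manifold'' into a finite set of local conditions at the four vertices of $\mathcal{K}(\Gamma)$, to express each local condition as an Euler-characteristic equation, and finally to check by elementary linear algebra that these four equations are equivalent to (i) and (ii). First I would recall the general fact that any connected $4$-colored graph represents a closed connected $3$-pseudomanifold: since $\Gamma$ is $4$-regular with a proper edge coloring, each $2$-cell of $\mathcal{K}(\Gamma)$ lies in exactly two tetrahedra (non-branching), and connectedness of $\Gamma$ yields strong connectedness. Hence $\mathcal{K}(\Gamma)$ is a $3$-manifold if and only if the link of each of its vertices is a $2$-sphere. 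As $(\Gamma,\gamma)$ is contracted there are exactly four vertices, colored $0,1,2,3$, and I would identify the link of the vertex colored $i$ with the cell complex $L_i:=\mathcal{K}(\Gamma_{\Delta_3\setminus\{i\}})$ represented by the $3$-colored graph obtained by deleting all edges of color $i$. This identification rests on the basic correspondence that the cells of $\mathcal{K}(\Gamma)$ whose vertex-colors form a set $C$ biject with the connected components of $\Gamma_{\Delta_3\setminus C}$ (facets correspond to $V(\Gamma)$, and so on).

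Next I would argue that $L_i$ is always a closed connected surface. Connectedness follows because $(\Gamma,\gamma)$ is contracted, so $\Gamma_{\Delta_3\setminus\{i\}}$ is connected; the surface (manifold) property holds because in any $3$-colored graph every vertex link in the represented complex is a single circle. Indeed, each such link is read off a $2$-residue $\Gamma_{\{k,l\}}$, which is a disjoint union of cycles, and each cycle contributes exactly one vertex carrying a circle link. Since a closed connected surface is $S^2$ precisely when its Euler characteristic equals $2$ (and $2$ is attained only by $S^2$, so orientability of $L_i$ need not be assumed), the manifold criterion becomes $\chi(L_i)=2$ for each $i$.

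Then I would compute $\chi(L_i)$ by applying the cell-component correspondence to $\Gamma_{\Delta_3\setminus\{i\}}$: the number of triangles is $\#V(\Gamma)$, the number of edges is $3\,\#V(\Gamma)/2$ (three color-pairs, each a perfect matching), and the number of vertices is the sum of $g_{ab}$ over the three pairs $\{a,b\}\subseteq\Delta_3\setminus\{i\}$. This gives
\[
\chi(L_i)=\Big(\sum_{\{a,b\}\subseteq \Delta_3\setminus\{i\}} g_{ab}\Big)-\frac{\#V(\Gamma)}{2}.
\]
Setting each of the four resulting expressions equal to $2$ yields four linear equations in the six numbers $g_{ij}$. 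Writing the differences of the three complementary pairs as $x=g_{23}-g_{01}$, $y=g_{13}-g_{02}$, $z=g_{12}-g_{03}$, the pairwise subtractions of these four equations force $x+y=y+z=z+x=0$, hence $x=y=z=0$, which is exactly condition (i); substituting back into any one equation returns $g_{01}+g_{02}+g_{03}=2+\#V(\Gamma)/2$, which is condition (ii). The reverse implication is the same substitution read backwards.

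The step I expect to be the main obstacle is the careful justification of the two structural lemmas on which the classical argument rests: that the link of the color-$i$ vertex is \emph{exactly} $\mathcal{K}(\Gamma_{\Delta_3\setminus\{i\}})$, and that this complex is a genuine surface rather than merely a $2$-pseudomanifold. The latter is the delicate point, since contractedness of $\Gamma$ guarantees that each $\Gamma_{\Delta_3\setminus\{i\}}$ is connected but does \emph{not} force its $2$-residues to be single cycles; one must see that a possibly disconnected $2$-residue simply produces several distinct vertices of $L_i$, each with a circle link, rather than a singular (pinched) vertex. Once these identifications are secured, the Euler-characteristic computation and the linear-algebra equivalence are routine.
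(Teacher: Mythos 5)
The paper states this result as a quoted theorem of Gagliardi and gives no proof of its own, so there is no internal argument to compare against; your proof is correct and is essentially the classical one from \cite{ga79a}. You reduce to the four vertex links, correctly identify the link of the colour-$i$ vertex with the closed connected surface $\mathcal{K}(\Gamma_{\Delta_3\setminus\{i\}})$ (including the delicate point that disconnected $2$-residues yield several nonsingular vertices rather than a pinch point), and the Euler-characteristic computation $\chi=\sum_{\{a,b\}\subseteq\Delta_3\setminus\{i\}}g_{ab}-\#V(\Gamma)/2=2$ for $i=0,1,2,3$ is exactly equivalent to conditions (i) and (ii) by the linear algebra you describe.
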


Let  $(\Gamma, \gamma)$ be a crystallization (with color set $\Delta_d$) of a connected closed $d$-manifold $M$. So,
$\Gamma$ is a $(d+1)$-regular graph. Choose two colors, say, $i$ and $j$ from $\Delta_d$. Let $\{G_1, \dots, G_{s+1}\}$
be the set of all connected components of $\Gamma_{\Delta_d\setminus \{i,j\}}$ and $\{H_1, \dots, H_{t+1}\}$ be the set
of all connected components of $\Gamma_{\{i,j\}}$. Since $\Gamma$ is regular, each $H_p$ is an even cycle. Note
that, if $d=2$ then $\Gamma_{\{i,j\}}$ is connected and hence $H_1= \Gamma_{\{i,j\}}$. Consider a set $\widetilde{S} =
\{x_1, \dots, x_s, x_{s+1}\}$ of $s+1$ elements. For $1\leq k\leq t+1$, consider the word $\tilde{r}_k$ in
$F(\widetilde{S})$ as follows. Choose a starting vertex $v_1$ in $H_k$. Let $H_k = v_1 e_{1}^i v_2 e_{2}^j v_3 e_{3}^i
v_{4} \cdots e_{2l-1}^i v_{2l}e_{2l}^jv_1$, where $e_{p}^i$ and  $ e_{q}^j$ are edges with colors $i$ and $j$
respectively. Define
\begin{align} \label{tildar}
\tilde{r}_k := x_{k_2}^{+1} x_{k_3}^{-1}x_{k_4}^{+1}  \cdots
x_{k_{2l}}^{+1}x_{k_1}^{-1},
\end{align}
where $G_{k_h}$ is the component of $\Gamma_{\Delta_d\setminus \{i,j\}}$ containing $v_h$.  For $1\leq k\leq t+1$, let
$r_k$ be the word obtained from $\tilde{r}_k$ by deleting $x_{s+1}^{\pm 1}$'s in $\tilde{r}_k$. So, $r_k$ is a
word in $F(S)$, where $S = \widetilde{S}\setminus \{x_{s+1}\}$. In \cite{ga79b}, Gagliardi proved the following.

\begin{prop}[Gagliardi] \label{prop:gagliardi79b}
For $d\geq 2$, let  $(\Gamma, \gamma)$ be a crystallization of a connected closed PL manifold $M$. For two
colors $i, j$, let $s$, $t$, $x_p$, $r_q$ be as above. If $\pi_1(M, x)$ is the fundamental group of $M$ at a
point $x$, then
$$
\pi_1(M, x) \cong \left\{ \begin{array}{lcl}
\langle {x_1, x_2,\dots, x_s} ~ | ~ {r_1} \rangle & \mbox{if}
& d=2,   \\
\langle {x_1, x_2, \dots, x_s} ~ | ~ {r_1, \dots, r_t} \rangle
& \mbox{if} & d\geq 3.
\end{array}\right.
$$
\end{prop}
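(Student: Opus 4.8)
The plan is to read $\pi_1(M)$ off the regular CW decomposition $M^{\ast}$ of $M$ that is dual to the pseudotriangulation $\mathcal K(\Gamma)$; since $|M^{\ast}|=M$, they have the same fundamental group. The cells of $M^{\ast}$ are dictated by $\Gamma$: its $0$-cells are the vertices of $\Gamma$ (dual to the facets of $\mathcal K(\Gamma)$), its $1$-cells are the edges of $\Gamma$ (dual to the $(d-1)$-simplices, each a common face of exactly two facets), and for every pair of colours $\{a,b\}$ and every component of $\Gamma_{\{a,b\}}$ there is a $2$-cell dual to the corresponding $(d-2)$-simplex. The crucial point is that this $2$-cell is glued along precisely the cycle of $\Gamma_{\{a,b\}}$ that indexes it, because the facets and $(d-1)$-faces incident to a fixed $(d-2)$-simplex are exactly the vertices and edges of that bicoloured cycle. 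Hence the $1$-skeleton of $M^{\ast}$ is $\Gamma$ itself, the $2$-cells for the chosen pair $\{i,j\}$ are attached along the even cycles $H_1,\dots,H_{t+1}$, and the remaining colours $E := \Delta_d\setminus\{i,j\}$ organise the components $G_1,\dots,G_{s+1}$ of $\Gamma_E$.

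Next I would compute $\pi_1(M^{\ast})$ from its $2$-skeleton by the usual recipe: pick a maximal tree $\mathcal T$ in $\Gamma$, take one generator for each edge outside $\mathcal T$, and one relator for each $2$-cell, reading the attaching cycle against $\mathcal T$. I would choose $\mathcal T$ adapted to the colour splitting, namely a spanning tree of each component $G_p$ (all edges coloured from $E$) together with $s$ extra edges joining the $G_p$. The generators then split into an $E$-type family (non-tree edges coloured from $E$, lying inside the $G_p$) and an $\{i,j\}$-type family. The heart of the argument is that the $2$-cells whose colour pair meets $E$ can be used, by Tietze transformations, to express every $E$-type generator in terms of the others and to identify all $\{i,j\}$-type generators emanating from the same component, so that after the dust settles there remains exactly one generator $x_p$ for each $G_p$. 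The only $2$-cells not consumed in this elimination are those carried by the cycles $H_k\subseteq\Gamma_{\{i,j\}}$; traversing $H_k=v_1 e^i_1 v_2 e^j_2\cdots$ and recording, for each vertex $v_h$, the label $x_{k_h}$ of the component $G_{k_h}\ni v_h$ with the sign prescribed by whether $v_h$ is met along an $i$- or a $j$-coloured edge reproduces exactly the word $\tilde r_k$ of (\ref{tildar}).

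It remains to account for the two index shifts. In the reduction the generator $x_{s+1}$ attached to the distinguished component $G_{s+1}$ can be arranged to be carried by a tree edge, hence is trivial in $\pi_1$; setting $x_{s+1}=1$ is precisely the passage from $\tilde r_k$ to $r_k$ obtained by deleting all occurrences of $x_{s+1}^{\pm1}$, leaving the generating set $\{x_1,\dots,x_s\}$. When $d\geq 3$ the complex $M^{\ast}$ also has $3$-cells (dual to the $(d-3)$-simplices), and these furnish one dependency among the relators $r_1,\dots,r_{t+1}$ — reflecting that $M$ is closed, so that one of the $H_k$-relations is a consequence of the others and may be dropped, giving $\langle x_1,\dots,x_s\mid r_1,\dots,r_t\rangle$. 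When $d=2$ the set $E$ consists of a single colour, so $\Gamma_E$ is a perfect matching, $\Gamma_{\{i,j\}}$ is a single Hamiltonian cycle ($t=0$), and the construction collapses to the classical presentation of a surface as a $2(s+1)$-gon with paired sides, leaving the single relator $r_1$; here there are no $3$-cells and hence no relation to discard.

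The main obstacle is the elimination step of the second paragraph: verifying that the bicoloured cycles meeting $E$ interlock in just the right way to remove every $E$-type generator and to merge the $\{i,j\}$-type generators down to one per component $G_p$, with no residual relations beyond the $H_k$. This is the genuinely combinatorial core of Gagliardi's theorem, and it is where one must exploit the regularity of $\Gamma$ together with the compatibility conditions of Proposition \ref{prop:gagliardi79a} (for instance $g_{ij}=g_{kl}$ when $d=3$, which forces the number of surviving relations to match the number of generators).
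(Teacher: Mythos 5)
First, a caveat: the paper itself offers no proof of this proposition --- it is quoted from Gagliardi's paper \cite{ga79b} as a known result --- so there is no in-paper argument to measure yours against. Judged on its own terms, your setup is the standard and correct one: the dual CW structure $M^{\ast}$ has $\Gamma$ as its $1$-skeleton, one $2$-cell for each bicoloured cycle (these being dual to the $(d-2)$-simplices of $\mathcal K(\Gamma)$), and $\pi_1(M)=\pi_1$ of the $2$-skeleton when $d\geq 3$. But what follows is an outline, not a proof. The entire content of Gagliardi's theorem is the elimination step that you explicitly defer in your last paragraph --- showing that the $2$-cells whose colour pair meets $E$ cancel every $E$-type generator, merge the $\{i,j\}$-type generators into exactly one class per component $G_p$, and leave no residual relators beyond those carried by the $H_k$. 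A crude count already shows this is delicate: for $d=3$ a $2p$-vertex crystallization gives $4p-(2p-1)=2p+1$ generators and, by Proposition \ref{prop:gagliardi79a}, $2(g_{01}+g_{02}+g_{03})=2p+4$ relators before elimination, so reaching $s$ generators and $s$ relators forces three relators, not one, to become redundant along the way; nothing in your sketch accounts for this.

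Two of your finishing moves are also incorrect as stated. Attaching cells of dimension $\geq 3$ does not change $\pi_1$ and does not ``furnish one dependency among the relators'': $3$-cells produce identities among relations (elements of $\pi_2$ of the $2$-skeleton), which is not the same as membership of $\tilde r_{t+1}$ in the normal closure of the others. That redundancy --- precisely the assertion, encoded in the paper via $\overline{R}$ in \eqref{R} and exploited in Lemma \ref{lemma:vertex}, that the omitted relation lies in $N(r_1,\dots,r_t)$ --- is a genuine theorem needing its own argument (compare Epstein's result quoted in the paper). Likewise, in your own scheme $x_{s+1}$ is the merged class of the many $\{i,j\}$-type generators attached to the component $G_{s+1}$, not a single edge, so it cannot ``be arranged to be carried by a tree edge''; killing it requires an actual Tietze move using a bounding $2$-cell and the freedom in which component is labelled $s+1$. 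Until the elimination step is carried out and these two points are repaired, the proposal remains a plausible plan rather than a proof.
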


In this case, we will say $(\Gamma,\gamma)$ yields $(\langle S \mid R \rangle, R)$, where $S=\{x_1,\dots, x_s\}$ and $R=\{r_1, \dots, r_t\}$.
From Proposition \ref{prop:gagliardi79a}, it is clear that, if $(\Gamma,\gamma)$ is a crystallization of a $3$-manifold then $s=t$. 
Note that, there may have a relation $r \in R$ such that $r \in \overline{R \setminus \{r\}}$ and in that case $\langle S \mid R \rangle \cong \langle S \mid R \setminus \{r\} \rangle$.

\begin{lemma}\label{lemma:vertex}
If $(\Gamma,\gamma)$ is a crystallization of a $3$-manifold such that $(\Gamma,\gamma)$ yields $(\langle S \mid R \rangle, R)$ then $\#V(\Gamma)\geq \lambda(\langle S \mid R \rangle, R)$.
\end{lemma}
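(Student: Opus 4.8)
The plan is to fix the two colors $i,j$ that produce the presentation and to compare the two sides relation-by-relation against the bicolored cycles. Write $\{H_1,\dots,H_{s+1}\}$ for the connected components of $\Gamma_{\{i,j\}}$; since $(\Gamma,\gamma)$ is a crystallization of a $3$-manifold we have $s=t$ by Proposition~\ref{prop:gagliardi79a}, so there are exactly $s+1$ such cycles. These even cycles partition $V(\Gamma)$, and the word $\tilde r_k$ read off $H_k$ has length equal to $|V(H_k)|$; hence $\#V(\Gamma)=\sum_{k=1}^{s+1}|V(H_k)|=\sum_{k=1}^{s+1}\ell(\tilde r_k)$, where $\ell$ denotes word length. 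Here $r_k$ is obtained from $\tilde r_k$ by deleting the letters $x_{s+1}^{\pm 1}$, and $R=\{r_1,\dots,r_s\}$. I would then reduce the lemma to the two inequalities $\lambda(\langle S\mid R\rangle,R)\le \sum_{k=1}^{s+1}\lambda(r_k)$ and $\lambda(r_k)\le \ell(\tilde r_k)$ for $1\le k\le s+1$, since chaining them with the displayed expression for $\#V(\Gamma)$ gives the claim.

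For the first inequality the key point is that the ``dropped'' word $r_{s+1}$ lies in $\overline R$. Because the labelling of the components $H_1,\dots,H_{s+1}$ is arbitrary, Proposition~\ref{prop:gagliardi79b} applies equally to any choice of which cycle is omitted, and in every such case the generating set is the same (the generators are read off the components of $\Gamma_{\Delta_3\setminus\{i,j\}}$, not off the $H_k$). Hence for each index $m$ the quotient $F(S)/N(\{r_1,\dots,r_{s+1}\}\setminus\{r_m\})$ is $\pi_1(M)$ via the identity map on $S$, so all these normal subgroups coincide with $N(R)$. In particular $r_{s+1}\in N(R)$ and $N((R\setminus\{r_i\})\cup\{r_{s+1}\})=N(R)$ for every $r_i\in R$, which is exactly the condition $r_{s+1}\in\overline R$ of \eqref{R}. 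Since Definition~\ref{def:presentation} uses an element of $\overline R$ of \emph{minimum} weight, this yields $\lambda(\langle S\mid R\rangle,R)\le \lambda(r_1)+\cdots+\lambda(r_s)+\lambda(r_{s+1})$.

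For the second inequality I would use that, for a cyclically reduced word, $\lambda(w)=2m-\varepsilon(w)$ equals twice the number of \emph{same-sign} cyclic adjacencies of $w$ (each opposite-sign adjacency contributes $2$ to $\varepsilon(w)$, each same-sign adjacency contributes $0$); equivalently this is the bookkeeping recorded by the counts $w^{(c)}_{ij}$. By construction $\tilde r_k$ has strictly alternating signs $+,-,+,-,\dots$, so all of its cyclic adjacencies are opposite-sign. Deleting the $p_k$ letters $x_{s+1}^{\pm1}$ leaves $q_k:=\ell(\tilde r_k)-p_k$ letters, and two surviving letters become adjacent in $r_k$ exactly when they are separated by a ``gap'' of deleted letters; a gap of odd length produces a same-sign adjacency and a gap of even length an opposite-sign one. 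Since each odd gap has length $\ge 1$ and there are $q_k$ gaps in all, the number of same-sign adjacencies of the deleted (not yet freely reduced) word is at most $\min(p_k,q_k)\le \tfrac12\ell(\tilde r_k)$. Finally I would verify that free and cyclic reduction never increase the same-sign adjacency count — a short case analysis on cancelling a pair $x^{+1}x^{-1}$ shows the net change is $\le 0$ — so the bound $\lambda(r_k)\le \ell(\tilde r_k)$ survives the passage to the reduced representative on which $\lambda$ is actually evaluated.

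Combining the two inequalities with $\#V(\Gamma)=\sum_k\ell(\tilde r_k)$ finishes the proof. I expect the main obstacle to be the quantitative second inequality: one must control $\lambda(r_k)$, which is defined on the cyclically reduced form, while the natural combinatorial data (the alternating signs and the positions of the $x_{s+1}$'s) live on the unreduced word $\tilde r_k$. The delicate point is precisely that deleting the $x_{s+1}^{\pm1}$'s and then freely reducing can both create same-sign adjacencies (from odd gaps) and destroy them (through cancellation), so the estimate $2\cdot(\text{same-sign adjacencies})\le \ell(\tilde r_k)$ must be shown to be stable under reduction, not merely to hold for the unreduced word.
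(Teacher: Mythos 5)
Your proof is correct and follows the same route as the paper's (much terser) argument: the paper simply asserts that the omitted relation lies in $\overline{R}$ and that the bound then follows from the construction of $\tilde r_k$ in Eq.~\eqref{tildar} together with Definition~\ref{def:presentation}, which is precisely your decomposition into the two inequalities $\lambda(\langle S\mid R\rangle,R)\le\sum_k\lambda(r_k)$ and $\lambda(r_k)\le\ell(\tilde r_k)$. You have supplied the details the paper leaves implicit — notably why the omitted word satisfies the defining condition of $\overline{R}$ (same generating set, hence equal normal closures, for every choice of omitted cycle) and why the same-sign-adjacency count is stable under free and cyclic reduction — and both steps check out.
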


\begin{proof}
Since the crystallization yields $(\langle S \mid R \rangle, R)$, from the above discussion, we know the 
crystallization yields the relations in $R\cup\{w\}$ where
$w \in \overline{R}$. Thus, the lemma follows from the construction of $\tilde{r}$  as in Eq. \eqref{tildar}
and Definition \ref{def:presentation}.
\end{proof}

\begin{defn} \label{def:minimal}
{\rm
A crystallization $(\Gamma,\gamma)$ of a $3$-manifold is called {\em minimal with respect to the pair $(\langle S \mid R \rangle, R)$} if 
$\#V(\Gamma) = \lambda(\langle S \mid R \rangle, R)$.}
\end{defn}

\section{Constructions of crystallizations and Algorithm 1}\label{sec:algorithm}
Here we are interested in orientable $3$-manifolds. Thus, by Proposition \ref{prop:ca-ga-pe80}, corresponding crystallizations are bipartite. We use black dots `$\bullet$' for vertices in one part and white dots `$\circ$' for vertices in the other part of the crystallizations.

\subsection{Constructions} 
Let $\langle S \mid R\rangle = \langle x_1,x_2 \mid r_1, r_2\rangle$ be a presentation of a group. Now we 
construct all possible crystallizations from the pair $(\langle S \mid R \rangle, R)$ by the following steps.

\medskip 

\noindent\textbf{Step 1:} 
If a crystallization yields $(\langle S \mid R \rangle, R)$ then the crystallization yields the relations in $R\cup\{w\}$, where $w$ is an element
$\overline{R}$. Since we are interested in $\lambda(\langle S \mid R\rangle, R)$-vertex crystallizations, $w \in \overline{R}$
is of minimum weight. Let $\{w_i \in \overline{R}, 1\leq i \leq k\}$ be the set of all  independent words with minimum weight (as there are only finite number of independent
words in $\overline{R}$ with minimum weight). 
Let $R_{i} = R \cup \{w_i\}$ for $1\leq i \leq k$. 
For each $\mathcal{R} \in \{R_1,\dots, R_k\}$, we will construct all possible crystallizations which yield the relations in $\mathcal{R}$. Choose a $\mathcal{R} \in \{R_1,\dots, R_k\}$.
   
\medskip

\noindent\textbf{Step 2:} If possible, let $(\Gamma,\gamma)$ be a crystallization of a $3$-manifold which yields the relations in $\mathcal{R}$. 
Without loss of generality, let $G_1, G_2, G_3$  be the components of $\Gamma_{\{0,1\}}$ such that $G_i$ represents the generator
$x_i$ for $1 \leq i \leq 2$ and $G_3$ represents $x_3$ (cf. Eq. \eqref{tildar} for construction of $\tilde{r}$). Let $n_i$ be the total number of appearance of $x_i$ in 
the three relations in $\mathcal{R}$ for $1\leq i \leq 2$ and $n_3=\lambda(\langle S \mid R \rangle, R)-(n_1+n_2)$. Then, the total number of vertices in $G_i$ is $n_i$ and let $G_i=C_{n_{i}}(x^{(1)}_i, \dots, x^{(n_i)}_i)$ for $1\leq i \leq 3$.
Clearly, each $n_i$ is even and $n_{1}+n_{2}+n_{3}=\#V(\Gamma)$. Without loss of generality, we can assume that $x^{(2j-1)}_ix^{(2j)}_i \in \gamma^{-1}(1)$ and 
$x^{(2j)}_ix^{(2j+1)}_i \in \gamma^{-1}(0)$
with $x^{(n_i+1)}_i=x^{(1)}_i$ for $1 \leq j \leq n_i/2$ and $1\leq i \leq 3$. Here and after, the additions and subtractions
at the point `$\ast$' in $x_i^{(\ast)}$ are modulo $n_i$ for $1\leq i \leq 3$.

\medskip

\noindent\textbf{Step 3:} From the fact $\#V(\Gamma)=\lambda(\langle S \mid R \rangle, R)$, we know that $\#V(\Gamma)$ is always even and there is no $2$-cycle in $\Gamma_{\{i,j\}}$ for $0\leq i \leq 1$ and 
$2\leq j \leq 3$. If $\#V(\Gamma)$ is of the form $4n$ for some $n\in \mathbb{N}$, then $g_{01}+g_{02}+g_{03}=2n+2$. 
This implies, $g_{02}+g_{03}=2n-1$.
Without loss of generality, consider $g_{02}=g_{13}=n-1$ and $g_{03}=g_{12}=n$. Therefore, all the components of $\Gamma_{\{0,3\}}$ (resp., $\Gamma_{\{1,2\}}$) are $4$-cycles. 
But, there are two choices for $\Gamma_{\{0,2\}}$ (resp., $\Gamma_{\{1,3\}}$). Either $\Gamma_{\{0,2\}}$ has one $8$-cycle and remaining $4$-cycles or
$\Gamma_{\{0,2\}}$ has two $6$-cycles and remaining $4$-cycles. Similar arguments hold for $\Gamma_{\{1,3\}}$. On the other hand, if $\#V(\Gamma)$ is of the form $4n+2$ for
some $n\in \mathbb{N}$, then $g_{01}+g_{02}+g_{03}=2n+3$. This implies, $g_{02}+g_{03}=2n$ and hence $g_{02}=g_{13}=g_{03}=g_{12}=n$. 
Therefore, $\Gamma_{\{i,j\}}$ has one $6$-cycle and remaining all $4$-cycles for $0\leq i \leq 1$ and $2\leq j \leq3$.

\medskip

\noindent\textbf{Step 4:} Since components of $\Gamma_{\{2,3\}}$ yield the relations in $\mathcal{R}$, without loss of generality,
let the colors $2$ and $3$ be the colors `$i$' and `$j$' respectively as in the construction of $\tilde{r}$ for $r\in \mathcal{R}$ (cf. Eq. \eqref{tildar}).
Let $m^{(c)}_{ij}:= \sum_{w\in \mathcal{R}} w^{(c)}_{ij}$ for $1\leq i < j \leq 3$ and $2 \leq c \leq 3$, where  $w^{(c)}_{ij}$ as in 
Subsection \ref{subsec:presentation}.
Then, the number of edges of color $c$ between $G_i$ and $G_j$ is $m_{ij}^{(c)}$
for $2\leq c \leq 3$ and $1 \leq i < j \leq 3$. Therefore, $2(m_{12}^{(c)}+m_{13}^{(c)}+m_{23}^{(c)}) =\#V(\Gamma)$ for $2\leq c \leq 3$.
Thus, $g_{01}+g_{02}+g_{03}=\#V(\Gamma)/2+2=m_{12}^{(c)}+m_{13}^{(c)}+m_{23}^{(c)}+2$. Since $g_{01}=3$, $g_{02}=g_{13}$ and 
$g_{03}=g_{12}$, we have $g_{0c}+g_{1c}=m_{12}^{(c)}+m_{13}^{(c)}+m_{23}^{(c)}-1$ for $2\leq c \leq 3$.

\medskip

\noindent\textbf{Step 5:} Since $\Gamma_{\{0,1,c\}}$ is connected for $2\leq c \leq 3$, $\#\{m_{ij}^{(c)} \geq 1, 1 \leq i < j \leq 3\}\geq 2$.

\noindent Case 1: Let $\#\{m_{ij}^{(c)} \geq 1, 1 \leq i < j \leq 3\}=3$, i.e., $m_{ij}^{(c)} \geq 1$, where $1 \leq i < j \leq 3$ for some $c\in\{2,3\}$. 
Then, the maximum number of bi-colored $4$-cycles in $\Gamma_{\{0,1,c\}}$
with two edges of color $c$ is $(m_{12}^{(c)}-1)+(m_{13}^{(c)}-1)+(m_{23}^{(c)}-1)= m_{12}^{(c)}+m_{13}^{(c)}+m_{23}^{(c)}-3$.
Since $g_{0c}+g_{1c}=m_{12}^{(c)}+m_{13}^{(c)}+m_{23}^{(c)}-1$, from the arguments in Step 3, $\Gamma_{\{0,1,c\}}$ must have $m_{12}^{(c)}+m_{13}^{(c)}+m_{23}^{(c)}-3$ 
 bi-colored $4$-cycles and two $6$-cycles with some edges of color $c$.
Therefore, from the arguments in Step 3, if $\#V(\Gamma)=4n$ for some $n \in \mathbb{N}$ then
$\Gamma_{\{0,3\}}$ (resp., $\Gamma_{\{1,2\}}$) is a union of $4$-cycles and $\Gamma_{\{c-2,c\}}$ is
of the form $2C_6 \sqcup (n-3)C_4$.
But, if $\#V(\Gamma)=4n+2$ for some $n \in \mathbb{N}$ then $\Gamma_{\{i,c\}}$ is
of the form $C_6 \sqcup (n-1)C_4$ for $0\leq i\leq1$.

\noindent Case 2: Let  $\#\{m_{ij}^{(c)} \geq 1, 1 \leq i < j \leq 3\}=2$ for some $c\in\{2,3\}$. Then, assume $\{i,j,l\} = \{1,2,3\}$ such that $m_{jl}^{(c)} =0$. 
Then, the maximum number of bi-colored $4$-cycles in $\Gamma_{\{0,1,c\}}$ with two edges of color $c$ is $(m_{ij}^{(c)}-1)+(m_{il}^{(c)}-1)=m_{ij}^{(c)}+m_{il}^{(c)}-2$.
Since $\#V(\Gamma)=\lambda(\langle S \mid R \rangle,R)$ and $m_{jl}^{(c)}=0$, $\Gamma_{\{0,1,c\}}$ does not have a  bi-colored $6$-cycle with some edges of color $c$.
Therefore, $g_{0c}+g_{1c}=m_{ij}^{(c)}+m_{il}^{(c)}-1$ and the arguments in Step 3 implies that $\Gamma_{\{0,1,c\}}$ must have
$m_{ij}^{(c)}+m_{il}^{(c)}-2$  bi-colored $4$-cycles and one $8$-cycle with some edges of color $c$. Thus, $\Gamma_{\{0,3\}}$ (resp., $\Gamma_{\{1,2\}}$) is a union of $4$-cycles and $\Gamma_{\{c-2,c\}}$  is
of the form $C_8 \sqcup (n-2)C_4$. In this case, $\#V(\Gamma)=4n$ for some $n \in \mathbb{N}$. 
Therefore, it is clear that, $m_{ij}^{(c)}$ edges of color $c$ between  $G_i$ and $G_j$ yield $m_{ij}^{(c)}-1$ 
bi-colored $4$-cycles for $1 \leq i < j \leq 3$ and $2\leq c\leq3$.

\medskip 

\noindent\textbf{Step 6:} Now, we will construct  $\Gamma_{\{0,1,2\}}$ and will show that $\Gamma_{\{0,1,2\}}$ is unique up to an isomorphism.
Choose $\{n_i,n_j,n_l\} =\{n_1,n_2,n_3\}$ such that $n_i \geq n_j$ and $n_i \geq n_l$. Clearly, there are edges of color $2$ between each of the pairs $(G_i,G_j)$ and $(G_i,G_l)$. 
Without loss of generality, let $x^{(1)}_ix^{(1)}_l,x^{(n_i)}_ix^{(1)}_j \in \gamma^{-1}(2)$. 

\noindent Case 1: If $m^{(2)}_{jl}=0$ then the path $P_5(x^{(n_j)}_j,x^{(1)}_j,x^{(n_i)}_i,x^{(1)}_i,x^{(1)}_l,x^{(n_l)}_l)$ must be a part of the $8$-cycle of $\Gamma_{\{0,2\}}$. 
Since the $m_{ij}^{(2)}$ (resp., $m_{il}^{(2)}$) edges of color $c$ between the pair $(G_i,G_j)$ (resp., $(G_i,G_l)$) yield $m_{ij}^{(2)}-1$ (resp., $m_{il}^{(2)}-1$) 
bi-colored $4$-cycles, we have
$x^{(1)}_ix^{(1)}_l,\dots, x^{(m_{il}^{(2)})}_ix^{(m_{il}^{(2)})}_l$ and $x^{(n_i)}_ix^{(1)}_j,\dots, x^{(n_i+1-m_{ij}^{(2)})}_ix^{(m_{ij}^{(2)})}_j  \in \gamma^{-1}(2)$.
In this case, $m_{ij}^{(2)}=n_j$, $m_{il}^{(2)}=n_l$ and $n_i=n_j+n_l$.

\noindent Case 2: If $m^{(2)}_{jl}\geq 1$ then $x_j^{(n_j)}x_l^{(n_l)}\in \gamma^{-1}(2)$ completes the $6$-cycle in $\Gamma_{\{0,2\}}$. Therefore,
by similar reasons as above, $\{x^{(1)}_ix^{(1)}_l,\dots, x^{(m_{il}^{(2)})}_ix^{(m_{il}^{(2)})}_l\}$, $\{x^{(n_i)}_ix^{(1)}_j,\dots, x^{(n_i+1-m_{ij}^{(2)})}_ix^{(m_{ij}^{(2)})}_j\}$ 
and $\{x^{(n_j)}_jx^{(n_l)}_l,\dots, x^{(n_j+1-m_{jl}^{(2)})}_jx^{(n_l+1-m_{jl}^{(2)})}_l\}$ are the sets of edges of color $2$.
In this case, $m_{ij}^{(2)}+m_{il}^{(2)}=n_i$, $m_{ij}^{(2)}+m_{jl}^{(2)}=n_j$ and $m_{il}^{(2)}+m_{jl}^{(2)}=n_l$.

Thus, $\Gamma_{\{0,1,2\}}$ is unique up to an isomorphism.
We use white dot `$\circ$' for vertex $x_i^{(1)}$ and black dot `$\bullet$' for vertex $x_i^{(n_i)}$.
Since $\Gamma_{\{0,1,2\}}$ is bipartite graph, $x_i^{(2p_i-1)}, x_j^{(2p_j-1)}$, $x_l^{(2p_l)}$ are denoted by white dots `$\circ$' 
and  $x_i^{(2p_i)}, x_j^{(2p_j)},x_l^{(2p_l-1)}$ are denoted by black dots `$\bullet$' for $1 \leq p_i \leq n_i/2$, $1 \leq p_j \leq n_j/2$ and $1 \leq p_l \leq n_l/2$.

\medskip

\noindent\textbf{Step 7:} Now, we are ready to construct a crystallization $(\Gamma,\gamma)$ which yields the relations in $\mathcal{R}$. For a given set of relations $\mathcal{R}$,
we constructed $\Gamma_{\{0,1,2\}}$ uniquely. By similar arguments as above, we have $m_{ij}^{(3)}, m_{il}^{(3)} \geq 1$.
Choose an edge $x_i^{(2q_i-1)}x_i^{(2q_i)} \in \gamma^{-1}(1)$. Since $1\leq q_i\leq n_i/2$, there are $n_i/2$ choices for 
such an edge.
Now, choose two edges $x_j^{(2q_j-1)}x_j^{(2q_j)}$ and $x_l^{(2q_l-1)}x_l^{(2q_l)} \in \gamma^{-1}(1)$ in $G_j$ and $G_l$ respectively.
Then, either $x_i^{(2q_i-1)}x_j^{(2q_j)}, x_i^{(2q_i)}x_l^{(2q_l)} \in \gamma^{-1}(3)$ or  $x_i^{(2q_i-1)}x_l^{(2q_l-1)}, x_i^{(2q_i)}x_j^{(2q_j-1)} \in \gamma^{-1}(3)$. 
Thus, either $P_5(x_j^{(2q_j-1)}$, $x_j^{(2q_j)}$, $x_i^{(2q_i-1)}$, $x_i^{(2q_i)}$, $x_l^{(2q_l)}, x_l^{(2q_l-1)})$ or 
$P_5(x_j^{(2q_j)}, x_j^{(2q_j-1)}, x_i^{(2q_i)}, x_i^{(2q_i-1)}, x_l^{(2q_l-1)}, x_l^{(2q_l)})$ is a path in $\Gamma_{\{1,3\}}$. Therefore, by  similar arguments as in Step 6,
there is a unique way to choose the remaining edges of color $3$. Since $1\leq q_i\leq n_i/2$, $1\leq q_j\leq n_j/2$ and $1\leq q_l\leq n_l/2$,
we have $2 \times \frac{n_i}{2} \times \frac{n_j}{2} \times {n_l}{2}=\frac{n_1n_2n_3}{4}$ choices for the $4$-colored graph.

\medskip

\noindent\textbf{Step 8:} For a set $\mathcal{R}$ of relations there are  $(n_1n_2n_3)/4$ choices for the $4$-colored graph. If there is a choice, for which $(\Gamma,\gamma)$ yields
the  relations in $\mathcal{R}$ then $(\Gamma,\gamma)$ is a regular bipartite $4$-colored graph which satisfies all the properties of Proposition \ref{prop:gagliardi79a}. 
Therefore, $(\Gamma,\gamma)$ is a
crystallization of a closed connected orientable $3$-manifold $M$ whose fundamental group is $(\langle S \, | \, R\rangle, R)$.
Now, we choose a different $\mathcal{R} \in \{R_1,\dots,R_{k}\}$ and repeat the process from Step 2 to find all possible  
$\lambda(\langle S \, | \, R\rangle, R)$-vertex crystallizations which yield  $(\langle S \, | \, R\rangle, R)$.
If there is no such $4$-colored graph for each $\mathcal{R} \in \{R_1,\dots,R_{k}\}$ then
there is no crystallization of a closed, connected orientable $3$-manifold, which yields $(\langle S \, | \, R\rangle, R)$ and is minimal with respect 
to $(\langle S \, | \, R\rangle, R)$.

\begin{theo}\label{theorem:algorithm}
Let $M$ be a closed connected orientable prime $3$-manifold with fundamental group $\langle S \mid R\rangle$, where $\#S=\#R=2$.
Let $(\Gamma,\gamma)$ be a crystallization constructed from the pair $(\langle S \mid R\rangle, R)$ by using the above construction. Then, we have the following.

\begin{enumerate}[{\rm (i)}]
\item If $\langle S \mid R\rangle \not \cong \mathbb{Z}_p$ then $|\mathcal{K}(\Gamma)| \cong M$.
\item If $\langle S \mid R\rangle \cong \mathbb{Z}_p$ then $|\mathcal{K}(\Gamma)| \cong L(p,q')$ for some $q' \in \{1, \dots, p-1\}$.
\item $(\Gamma,\gamma)$ is minimal with respect to the pair $(\langle S \mid R\rangle, R)$.
\end{enumerate}
\end{theo}

\begin{proof}
Since $(\Gamma,\gamma)$ yields the presentation $\langle S \mid R\rangle$, by Proposition \ref{prop:gagliardi79b}, 
$\pi_1(|\mathcal{K}(\Gamma)|,\ast) \cong$ $\langle S \mid R\rangle$. Since $\Gamma$ is regular and bipartite, $|\mathcal{K}(\Gamma)|$ is a closed connected orientable
$3$-manifold. Since $M$ is prime 3-manifold, the fundamental group of $M$ is not a free product of two groups. Since $M$ and 
$|\mathcal{K}(\Gamma)|$ have same fundamental group, it follows that  $|\mathcal{K}(\Gamma)|$ is also prime 3-manifold. Part (i) now follows from Proposition \ref{prop:hyperbolic}.

If $\pi_1(|\mathcal{K}(\Gamma)|,\ast) \cong \pi_1(M,\ast)\cong \mathbb{Z}_p$ then, by the proof of
elliptization conjecture, $|\mathcal{K}(\Gamma)|$ is spherical and hence $|\mathcal{K}(\Gamma)|\cong S^3/\mathbb{Z}_p\cong L(p,q')$ for some $q' \in \{1, \dots, p-1\}$.
This proves part (ii).

Part (iii) follows from the construction.
\end{proof}

\subsection{Algorithm 1}\label{subsec:algorithm1}
Now, we present our algorithm which finds crystallizations of $3$-manifolds for a presentation $(\langle S \mid R \rangle, R)$ with two generators and two relations, 
such that the crystallizations yield the pair and have the number of vertices is equal to the
weight of the pair $(\langle S \mid R \rangle, R)$.
\begin{enumerate}[(i)]
\item Find the set $\{w_i \in \overline{R}, 1\leq i \leq k\}$ of independent words such that $\lambda(w_i)$ is minimum.
Let $\mathcal{R} = R \cup \{w_1\}$ and consider a class of graphs $\mathcal{C}$  which is empty.

\item For $\mathcal{R}$, $(a)$ find $m_{ij}^{(c)}$ for $2\leq c \leq 3$ and $1 \leq i < j \leq 3$, $(b)$ find $n_1, n_2, n_3$
and $(c)$ choose $i,j,l$ such that $n_i \geq n_j$ and $n_i \geq n_l$.

\item Consider three bi-colored cycles $G_i=C_{n_{i}}(x^{(1)}_i, \dots, x^{(n_i)}_i)$ for $1\leq i \leq 3$ such that $x^{(2j-1)}_ix^{(2j)}_i$ has color $1$ and $x^{(2j)}_ix^{(2j+1)}_i$ has color $0$ with the consideration $x^{(n_i+1)}_i=x^{(1)}_i$ for $1 \leq j \leq n_i/2$ and $1\leq i \leq 3$.

\item  The edges  $x^{(1)}_ix^{(1)}_l,\dots, x^{(m_{il}^{(2)})}_ix^{(m_{il}^{(2)})}_l$ and $x^{(n_i)}_ix^{(1)}_j,\dots, x^{(n_i+1-m_{ij}^{(2)})}_ix^{(m_{ij}^{(2)})}_j$ have color $2$. If $n_j + n_l \neq n_i$ then
the edges $x^{(n_j)}_jx^{(n_l)}_l,\dots, x^{(n_j+1-m_{jl}^{(2)})}_jx^{(n_l+1-m_{jl}^{(2)})}_l$ have also color $2$.

\item For $1\leq q_i\leq n_i/2$, $1\leq q_j\leq n_j/2$ and $1\leq q_l\leq n_l/2$, choose a bi-colored path of colors $1$ and $3$ from the two paths $P_5(x_j^{(2q_j-1)}$, $x_j^{(2q_j)}$, $x_i^{(2q_i-1)}$, $x_i^{(2q_i)}$, $x_l^{(2q_l)}, x_l^{(2q_l-1)})$ and 
$P_5(x_j^{(2q_j)}, x_j^{(2q_j-1)}, x_i^{(2q_i)}, x_i^{(2q_i-1)}, x_l^{(2q_l-1)}, x_l^{(2q_l)})$ and join the remaining vertices by edges of color 3
as there is unique way to choose these edges with the chosen  path. There are $(n_1n_2n_3)/4$ choices for $4$-colored graphs. If some graphs yield $(\langle S \mid R \rangle, R)$ then put them in the class $\mathcal{C}$.

\item If $\mathcal{R} = R \cup\{w_i\}$, for some $i \in \{1, \dots, k-1\}$, choose $\mathcal{R} = R \cup\{w_{i+1}\}$ and go to the step (ii).
If $\mathcal{R} = R \cup\{w_{k}\}$ then $\mathcal{C}$ is the collection of all crystallizations which yield  $(\langle S \mid R \rangle, R)$ and are minimal with respect to 
$(\langle S \mid R \rangle, R)$. If $\mathcal{C}$ is empty then such a
crystallization does not exist.
\end{enumerate}

\section{Applications of Algorithm 1}

From a given presentation 
$\langle S \mid R\rangle$ with two generators and two relations, using our algorithm, we construct all the possible 
$\lambda(\langle S \mid R\rangle, R)$-vertex crystallizations which yield $(\langle S \mid R\rangle, R)$. We also discuss the cases, where no such crystallization exists.

\subsection{Constructions of some crystallizations}
Here we consider the cases where Algorithm $1$ gives crystallizations for a pair $(\langle S \mid R\rangle, R)$.

\begin{eg}[\textbf{\boldmath{Crystallizations of $M\langle m,n,2 \rangle$ for $m,n\geq 2$}}]\label{eg:polyhedral}
{\rm Recall that binary polyhedral group $\langle m,n,k \rangle$ has a presentation $\langle x_1,x_2,x_3 \mid x_1^{m} = x_2^{n} = x_3^{k}=x_1x_2x_3 \rangle$ 
for some integer $m,n,k \geq 2$. 
If $k=2$ then $x_3=x_1x_2$ and hence $x_1^{m}=x_1x_2x_1x_2$ and $x_2^{n}=x_1x_2x_1x_2$.
Therefore,  $\langle m,n,2 \rangle$ has a presentation  $\langle S \mid R \rangle$, where $S=\{x_1,x_2\}$ and $R=\{x_1^{m-1}x_2^{-1}x_1^{-1}x_2^{-1},
x_2^{n-1}x_1^{-1}x_2^{-1}x_1^{-1}\}$.
It is not difficult to prove that, $x_1^{m}x_2^{-n}$ is the only independent element in $\overline{R}$ of minimum weight.

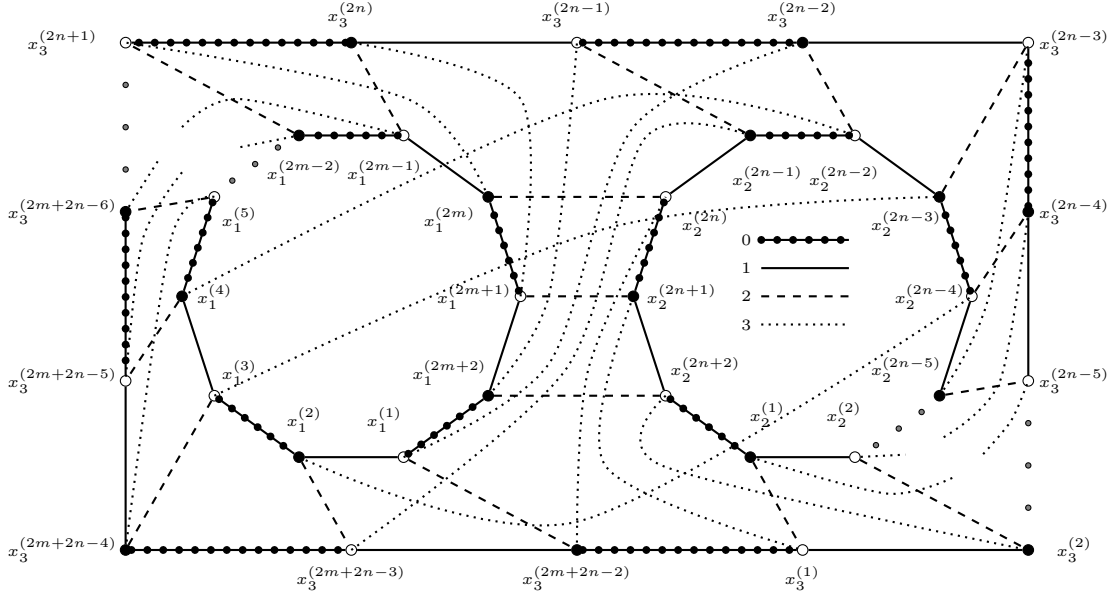
\begin{figure}[ht]
\tikzstyle{vert}=[circle, draw, fill=black!100, inner sep=0pt, minimum width=4pt] \tikzstyle{vertex}=[circle,
draw, fill=black!00, inner sep=0pt, minimum width=4pt] \tikzstyle{ver}=[] \tikzstyle{extra}=[circle, draw,
fill=black!50, inner sep=0pt, minimum width=2pt] \tikzstyle{edge} = [draw,thick,-] \centering
\begin{tikzpicture}[scale=0.75]

\begin{scope}[shift={(2,4)}]
\node[ver] (3) at (1,-4.5){\tiny{$3$}}; 
\node[ver] (2) at (1,-4){\tiny{$2$}};
\node[ver](1) at (1,-3.5){\tiny{$1$}}; 
\node[ver](0) at (1,-3){\tiny{$0$}}; 
\node[ver] (8) at (3,-4.5){}; 
\node[ver](7) at (3,-4){}; 
\node[ver](6) at (3,-3.5){}; 
\node[ver] (5) at (3,-3){};
\end{scope}

\begin{scope}[shift={(-4,0)}, rotate=36]
\foreach \x/\y in {72/x_1^{(2m-2)},144/x_1^{(4)},216/x_1^{(2)},288/x_1^{(2m+2)},0/x_1^{(2m)}}
{ \node[ver](\y) at (\x:2.3){\tiny{$\y~~$}};
    \node[vert] (\y) at (\x:3){};} 
    \foreach \x/\y in {108/x_1^{(5)},180/x_1^{(3)},252/x_1^{(1)},324/x_1^{(2m+1)},36/x_1^{(2m-1)}}{ 
    \node[ver] (\y) at (\x:2.3){\tiny{$\y~~$}};
    \node[vertex] (\y) at (\x:3){};}
    \foreach \x/\y in {x_1^{(1)}/x_1^{(2)},x_1^{(3)}/x_1^{(4)},x_1^{(2m-1)}/x_1^{(2m)},x_1^{(2m+1)}/x_1^{(2m+2)},0/5,1/6}{
\path[edge] (\x) -- (\y);} 

\foreach \x/\y in {x_1^{(1)}/x_1^{(2m+2)},x_1^{(3)}/x_1^{(2)},x_1^{(5)}/x_1^{(4)},x_1^{(2m+1)}/x_1^{(2m)},x_1^{(2m-1)}/x_1^{(2m-2)}}
{\path[edge] (\x) -- (\y);} 

\foreach \x/\y in {x_1^{(1)}/x_1^{(2m+2)},x_1^{(3)}/x_1^{(2)},x_1^{(5)}/x_1^{(4)},x_1^{(2m+1)}/x_1^{(2m)},x_1^{(2m-1)}/x_1^{(2m-2)},0/5}
{\draw [line width=3pt, line cap=round, dash pattern=on 0pt off 2\pgflinewidth]  (\x) -- (\y);} 
\foreach \x/\y in {-0.5/2.9,0/2.9,0.5/2.9}
{\node[extra] () at (\x,\y){};} 

\end{scope}

\begin{scope}[shift={(4,0)}, rotate=-36]
\foreach \x/\y in {72/x_2^{(2n-3)},144/x_2^{(2n-1)},216/x_2^{(2n+1)},288/x_2^{(1)},0/x_2^{(2n-5)}}
{ \node[ver](\y) at (\x:2.2){\tiny{$~\y$}};
    \node[vert] (\y) at (\x:3){};} 
    \foreach \x/\y in {108/x_2^{(2n-2)},180/x_2^{(2n)},252/x_2^{(2n+2)},324/x_2^{(2)},36/x_2^{(2n-4)}}{ 
    \node[ver] (\y) at (\x:2.2){\tiny{$~\y$}};
    \node[vertex] (\y) at (\x:3){};}
    \foreach \x/\y in {x_2^{(2n+2)}/x_2^{(2n+1)},x_2^{(2n)}/x_2^{(2n-1)},x_2^{(2n-4)}/x_2^{(2n-5)},x_2^{(2)}/x_2^{(1)}}{
\path[edge] (\x) -- (\y);} 

\foreach \x/\y in {x_2^{(2n+2)}/x_2^{(1)},x_2^{(2n)}/x_2^{(2n+1)},x_2^{(2n-2)}/x_2^{(2n-1)},x_2^{(2n-3)}/x_2^{(2n-2)},x_2^{(2n-4)}/x_2^{(2n-3)}}
{\path[edge] (\x) -- (\y);} 

\foreach \x/\y in {x_2^{(2n+2)}/x_2^{(1)},x_2^{(2n)}/x_2^{(2n+1)},x_2^{(2n-2)}/x_2^{(2n-1)},x_2^{(2n-4)}/x_2^{(2n-3)}}
{\draw [line width=3pt, line cap=round, dash pattern=on 0pt off 2\pgflinewidth]  (\x) -- (\y);} 
\end{scope}

\begin{scope}[shift={(4,0)}, rotate=-144]
\foreach \x/\y in {-0.5/2.9,0/2.9,0.5/2.9}
{\node[extra] () at (\x,\y){};} 
\end{scope}

\begin{scope}[]
\foreach \x/\y/\z/\w in {8/4.5/8.7/x_3^{(2n-3)},8/-1.5/8.7/x_3^{(2n-5)}}
{\node[ver](\w) at (\z,\y){\tiny{$~~\w$}};
\node[vertex] (\w) at (\x,\y){};} 
\foreach \x/\y/\z/\w in {8/1.5/8.7/x_3^{(2n-4)},8/-4.5/8.7/x_3^{(2)}}
{\node[ver](\w) at (\z,\y){\tiny{$~~\w$}};
\node[vert] (\w) at (\x,\y){};} 
\foreach \x/\y/\z/\w in {-8/4.5/-9/x_3^{(2n+1)},-8/-1.5/-9/x_3^{(2m+2n-5)}}
{\node[ver](\w) at (\z,\y){\tiny{$\w~~$}};
\node[vertex] (\w) at (\x,\y){};} 
\foreach \x/\y/\z/\w in {-8/1.5/-9/x_3^{(2m+2n-6)},-8/-4.5/-9/x_3^{(2m+2n-4)}}
{\node[ver](\w) at (\z,\y){\tiny{$\w~~$}};
\node[vert] (\w) at (\x,\y){};} 
\foreach \x/\y/\z/\w in {-4/-4.5/-5/x_3^{(2m+2n-3)},4/-4.5/-5/x_3^{(1)},0/4.5/5/x_3^{(2n-1)}}
{\node[ver](\w) at (\x,\z){\tiny{$\w$}};
\node[vertex] (\w) at (\x,\y){};} 
\foreach \x/\y/\z/\w in {-4/4.5/5/x_3^{(2n)},4/4.5/5/x_3^{(2n-2)},0/-4.5/-5/x_3^{(2m+2n-2)}}
{\node[ver](\w) at (\x,\z){\tiny{$\w$}};
\node[vert] (\w) at (\x,\y){};} 
\foreach \x/\y in {-8/2.25,-8/3,-8/3.75,8/-2.25,8/-3,8/-3.75}
{\node[extra] () at (\x,\y){};} 

\foreach \x/\y in {x_3^{(2n)}/x_3^{(2n+1)},x_3^{(2n-1)}/x_3^{(2n-2)},x_3^{(2m+2n-6)}/x_3^{(2m+2n-5)},x_3^{(2m+2n-4)}/x_3^{(2m+2n-3)},x_3^{(2m+2n-2)}/x_3^{(1)},x_3^{(2n-2)}/x_3^{(2n-3)},x_3^{(2n-4)}/x_3^{(2n-3)}}
{\path[edge] (\x) -- (\y);}

\foreach \x/\y in {x_3^{(2n)}/x_3^{(2n+1)},x_3^{(2n-1)}/x_3^{(2n-2)},x_3^{(2m+2n-6)}/x_3^{(2m+2n-5)},x_3^{(2m+2n-4)}/x_3^{(2m+2n-3)},x_3^{(2m+2n-2)}/x_3^{(1)},x_3^{(2n-4)}/x_3^{(2n-3)}}
{\draw [line width=3pt, line cap=round, dash pattern=on 0pt off 2\pgflinewidth]  (\x) -- (\y);} 

\foreach \x/\y in {x_3^{(2n)}/x_3^{(2n-1)},x_3^{(2m+2n-4)}/x_3^{(2m+2n-5)},x_3^{(2m+2n-2)}/x_3^{(2m+2n-3)},x_3^{(2)}/x_3^{(1)},x_3^{(2n-4)}/x_3^{(2n-5)}}
{\path[edge] (\x) -- (\y);}

\end{scope}

\foreach \x/\y in {x_1^{(2m)}/x_2^{(2n)},x_1^{(2m+1)}/x_2^{(2n+1)},x_1^{(2m+2)}/x_2^{(2n+2)},x_2^{(1)}/x_3^{(1)},x_2^{(2)}/x_3^{(2)},x_2^{(2n-5)}/x_3^{(2n-5)},x_2^{(2n-4)}/x_3^{(2n-4)},
x_2^{(2n-3)}/x_3^{(2n-3)},x_2^{(2n-2)}/x_3^{(2n-2)},x_2^{(2n-1)}/x_3^{(2n-1)},x_1^{(2m-1)}/x_3^{(2n)},x_1^{(2m-2)}/x_3^{(2n+1)},x_1^{(5)}/x_3^{(2m+2n-6)},x_1^{(4)}/x_3^{(2m+2n-5)},
x_1^{(3)}/x_3^{(2m+2n-4)},x_1^{(2)}/x_3^{(2m+2n-3)},x_1^{(1)}/x_3^{(2m+2n-2)},2/7}{
\path[edge, dashed] (\x) -- (\y);}

\node[ver](a1) at (-6.2,2.6){};
 \node[ver](a2) at (-7.3,2.6){};
  \node[ver](a3) at (6,-2.8){};
\foreach \x/\y in {3/8,x_1^{(2m-2)}/a1,x_3^{(2m+2n-6)}/a2,x_2^{(2)}/a3}{
\path[edge, dotted] (\x) -- (\y);} 

\draw[edge, dotted] plot [smooth,tension=0.5] coordinates{(x_1^{(5)}) (-7.2,1) (x_3^{(2m+2n-4)}) };
\draw[edge, dotted] plot [smooth,tension=0.5] coordinates{(x_1^{(2m)}) (-3,3.5) (x_3^{(2n+1)}) };
\draw[edge, dotted] plot [smooth,tension=0.5] coordinates{(x_1^{(2m+1)}) (-1,3.5) (x_3^{(2n)}) };
\draw[edge, dotted] plot [smooth,tension=0.5] coordinates{(x_1^{(2m+2)}) (-0.5,0) (x_3^{(2n-1)}) };
\draw[edge, dotted] plot [smooth,tension=0.5] coordinates{(x_1^{(1)}) (-0.5,-1.2)(1,3) (x_3^{(2n-2)}) };
\draw[edge, dotted] plot [smooth,tension=0.5] coordinates{(x_3^{(2m+2n-3)}) (-0.2,-1.3)(1.2,2.8) (x_2^{(2n-1)}) };
\draw[edge, dotted] plot [smooth,tension=0.5] coordinates{(x_3^{(2m+2n-2)}) (0.2,-1.3)(x_2^{(2n)}) };
\draw[edge, dotted] plot [smooth,tension=0.5] coordinates{(x_2^{(2n+1)}) (0.5,-3)(x_3^{(1)}) };
\draw[edge, dotted] plot [smooth,tension=0.5] coordinates{(x_2^{(2n+2)}) (1.5,-3)(x_3^{(2)}) };
\draw[edge, dotted] plot [smooth,tension=0.5] coordinates{(x_2^{(2n-5)}) (7.3,0)(x_3^{(2n-3)}) };
\draw[edge, dotted] plot [smooth,tension=0.5] coordinates{(x_1^{(2)}) (0,-4)(x_2^{(2n-4)}) };
\draw[edge, dotted] plot [smooth,tension=0.5] coordinates{(x_1^{(4)}) (0.5,3.5)(x_2^{(2n-2)}) };
\draw[edge, dotted] plot [smooth,tension=0.5] coordinates{(x_1^{(3)}) (0.5,1.2)(x_2^{(2n-3)}) };
\draw[edge, dotted] plot [smooth,tension=0.5] coordinates{(-7,2.2) (-7.7,1)(x_3^{(2m+2n-5)}) };
\draw[edge, dotted] plot [smooth,tension=0.5] coordinates{(-7,3) (-6,3.5)(x_1^{(2m-1)}) };
\draw[edge, dotted] plot [smooth,tension=0.5] coordinates{(x_2^{(1)}) (5.5,-3.5)(6.7,-3.2) };
\draw[edge, dotted] plot [smooth,tension=0.5] coordinates{ (7.2,-2.8)(7.7,-2.2)(x_3^{(2n-5)}) };
\draw[edge, dotted] plot [smooth,tension=0.5] coordinates{ (6.5,-2.5)(7.5,-1.5)(x_3^{(2n-4)}) };
\end{tikzpicture}
\caption{Crystallization of $M\langle m,n,2 \rangle$ for $m,n \geq 2$}\label{fig:polyhedral}
\end{figure}

Therefore, $\mathcal{R}=R\cup \{x_1^{m}x_2^{-n}\}$. Thus, $m^{(c)}_{12}=3$, $m^{(c)}_{13}=2m-1$, $m^{(c)}_{23}=2n-1$ for $2\leq c \leq 3$,
$(n_1,n_2,n_3)=(2m+2,2n+2,2m+2n-2)$ and  $G_i=C_{n_{i}}(x^{(1)}_i, \dots, x^{(n_i)}_i)$ for $1\leq i \leq 3$ as in Figure \ref{fig:polyhedral}. 
Choose $(n_i,n_j,n_l)=(n_3,n_1,n_2)$. Thus, $\{x^{(1)}_3x^{(1)}_2,\dots, x_3^{(2n-1)}x^{(2n-1)}_2\}$, 
$\{x^{(2m+2n-2)}_3x^{(1)}_1,\dots, x^{(2n)}_3x^{(2m-1)}_1\}$ 
and $\{x^{(2m+2)}_1x^{(2n+2)}_2, x^{(2m+1)}_1x^{(2n+1)}_2, x^{(2m)}_1x^{(2n)}_2\}$ are the sets of edges of color $2$. 
Here the only choice for the triplet $(q_1,q_2,q_3)$ is $(1,n-2,n-1)$ and for the path 
$P_5$ is $P_5(x_1^{(2)},  x_1^{(1)}, x_3^{(2n-2)}, x_3^{(2n-3)}, x_2^{(2n-5)}, x_2^{(2n-4)})$, they give a regular bipartite $4$-colored graph $(\Gamma, \gamma)$
which yields  $(\langle S \mid R\rangle, R)$  (cf. Lemma \ref{lemma:unique-polyhedra}). Therefore, by Theorem \ref{theorem:algorithm}, $(\Gamma, \gamma)$ (cf. Figure \ref{fig:polyhedral}) is a crystallization of the closed connected orientable $3$-manifold $M\langle m,n,2 \rangle$ (cf. Subsection \ref{subsec:quaternion}) and minimal with respect to $(\langle S \mid R\rangle, R)$.

Observe that (i) $\#V(\Gamma)= 4m+4n+2 =(2m+2)+(2n+2)+(2m+2n-2) = \lambda(\langle S \mid R\rangle, R)$, (ii) $\Gamma_{\{i,j\}}=(m+n-1)C_4 \sqcup C_6$ for  $0\leq i \leq 1$ and $2\leq j \leq3$
as $\#V(\Gamma)= 4(m+n-1)+2$, and (iii) the $m_{ij}^{(c)}$ edges of color $c$ between $G_i$ and $G_j$ yield $m_{ij}^{(c)}-1$  
bi-colored $4$-cycles  for $1 \leq i < j \leq 3$ and $2\leq c \leq3$.}
\end{eg}

\begin{lemma}\label{lemma:unique-polyhedra}
Let the presentation $(\langle S \mid R\rangle, R)$  and $q_1, q_2, q_3, P_5$ be as in Example $\ref{eg:polyhedral}$. 
Then, the choice of the triplet 
$(q_1,q_2,q_3)$ and the path $P_5$ for which the $4$-colored graph $(\Gamma,\gamma)$ yields $(\langle S \mid R\rangle, R)$, is unique.
\end{lemma}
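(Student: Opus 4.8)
The plan is to freeze everything except the color-$3$ edges and then show that the single requirement that $(\Gamma,\gamma)$ yield $(\langle S\mid R\rangle,R)$ forces those edges. By Step $6$ the graph $\Gamma_{\{0,1,2\}}$ is already unique up to isomorphism, so I treat the color-$0$, color-$1$ and color-$2$ edges, together with the vertex labels $x_i^{(p)}$ of Figure \ref{fig:polyhedral}, as fixed data; in particular the $m_{13}^{(2)}=2m-1$ color-$2$ edges from $G_3$ to $G_1$, the $m_{23}^{(2)}=2n-1$ color-$2$ edges from $G_3$ to $G_2$, and the three color-$2$ edges from $G_1$ to $G_2$ are exactly the ones written down in Example \ref{eg:polyhedral}. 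The only remaining freedom is the color-$3$ perfect matching which, by Step $7$, is encoded by the triple $(q_1,q_2,q_3)$ and the orientation of the initial path $P_5$; this produces the $(n_1n_2n_3)/4$ candidate graphs, and the task is to isolate the one yielding $\mathcal R=R\cup\{x_1^mx_2^{-n}\}$.

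First I would restate the goal combinatorially: by Proposition \ref{prop:gagliardi79b} the graph yields $(\langle S\mid R\rangle,R)$ exactly when the three components of $\Gamma_{\{2,3\}}$, read off as in Eq. \eqref{tildar} and then stripped of the letters $x_3^{\pm1}$, spell $r_1$, $r_2$ and $w=x_1^mx_2^{-n}$ up to cyclic rotation and inversion. The lever is that, because a $\Gamma_{\{2,3\}}$ cycle alternates colors $2$ and $3$ and because in each $\tilde r$ two cyclically adjacent letters of equal sign are separated by an $x_3$ while two of opposite sign stay adjacent, a maximal run such as $x_1^m$ in $w$ forces its cycle to weave $G_1,G_3,G_1,G_3,\dots$; along this weave the $G_3$-to-$G_1$ steps are the already-fixed color-$2$ edges, so requiring the walk to read the prescribed subword pins down each intervening color-$3$ edge. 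The scarcity of the $G_1$--$G_2$ links (only three edges of each color, against the long $G_3$--$G_1$ and $G_3$--$G_2$ runs) then supplies the anchors that locate where each run begins and ends.

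Carrying this out I would trace $w$ first, using its two pure runs $x_1^m$ and $x_2^{-n}$ to force all color-$3$ edges of the $w$-cycle joining $G_3$ to $G_1$ and to $G_2$, and then trace $r_1$ and $r_2$, whose mixed blocks $x_2^{-1}x_1^{-1}x_2^{-1}$ and $x_1^{-1}x_2^{-1}x_1^{-1}$ fix the residual color-$3$ edges. Reading off the indices modulo $n_1,n_2,n_3$, the forced matching is the unique one whose three chosen color-$1$ edges are $x_1^{(1)}x_1^{(2)}$, $x_2^{(2n-5)}x_2^{(2n-4)}$ and $x_3^{(2n-3)}x_3^{(2n-2)}$, that is $(q_1,q_2,q_3)=(1,n-2,n-1)$ together with the path $P_5(x_1^{(2)},x_1^{(1)},x_3^{(2n-2)},x_3^{(2n-3)},x_2^{(2n-5)},x_2^{(2n-4)})$; any other value of a parameter or the opposite orientation makes some run visit a vertex out of cyclic order and hence fails to spell $\mathcal R$. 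Existence of this choice is then the direct verification, readable straight from Figure \ref{fig:polyhedral}, that it does reproduce $r_1,r_2,w$.

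The step I expect to be the real obstacle is the global consistency of the forced matching: the local ``walk along fixed color-$2$ edges'' argument determines each color-$3$ edge, but one must check that these edges close up into \emph{exactly} three cycles spelling $r_1,r_2,w$, without fragmenting into spurious short cycles or fusing two relations, and that no nontrivial cyclic shift or reflection of a relation can be realized once the color-$2$ edges are frozen. Making this airtight reduces to a careful index computation modulo $n_1,n_2,n_3$ that eliminates every competing alignment; the data $m_{12}^{(c)}=3$, $m_{13}^{(c)}=2m-1$, $m_{23}^{(c)}=2n-1$ and $(n_1,n_2,n_3)=(2m+2,2n+2,2m+2n-2)$ recorded in Example \ref{eg:polyhedral} are precisely what drive the count of admissible graphs down to one.
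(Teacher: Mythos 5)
Your overall strategy is the same as the paper's: freeze $\Gamma_{\{0,1,2\}}$, and argue that the requirement of spelling $r_1$, $r_2$ and $w=x_1^mx_2^{-n}$ along the alternating $\{2,3\}$-cycles forces the color-$3$ matching. But the decisive step is only asserted, not carried out. You write that ``any other value of a parameter or the opposite orientation makes some run visit a vertex out of cyclic order and hence fails to spell $\mathcal R$,'' and you explicitly defer the ``careful index computation'' that would eliminate competing alignments. That computation \emph{is} the proof; without it you have a plan, not a demonstration of uniqueness.

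Concretely, here is the ambiguity your sketch does not resolve. The three $G_1$--$G_2$ color-$2$ edges pin down (up to automorphism) that $x_2^{(2n)}$ and $x_2^{(2n+1)}$ are the starting vertices of the two relations containing the long $x_1$-runs, and hence that $x_1^{(2m)}$ and $x_1^{(2m+1)}$ are joined to $G_3$ by color-$3$ edges. At that point there are still \emph{two} ways to lay down the long run of color-$3$ edges between $G_1$ and $G_3$ relative to the frozen color-$2$ edges: either $\{x_1^{(2m+1)}x_3^{(2n)},\,x_1^{(2m-1)}x_3^{(2n+2)},\dots\}$ or $\{x_1^{(2m+1)}x_3^{(2m+2n-2)},\,x_1^{(1)}x_3^{(2m+2n-4)},\dots\}$. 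Both weave $G_1,G_3,G_1,G_3,\dots$ and both read off a legitimate-looking power of $x_1$ in correct cyclic order, so neither is excluded by your ``out of cyclic order'' criterion. The second is killed only by following its consequences: it forces $x_1^{(2m)}x_3^{(1)}\in\gamma^{-1}(3)$, which makes the relation starting at $x_2^{(2n)}$ contain a subword of the form $x_1x_2wx_2^{-1}$, and no relation in $\mathcal R$ has that shape. This case analysis (and its analogue for the $G_2$--$G_3$ run) is the content of the paper's proof and is missing from yours. You also need, as the paper supplies, the bipartiteness argument showing that the $x_1^{m-1}$-run and the $x_1^{m}$-run use vertices of opposite colors of $G_1$, which is what makes the starting-vertex identification via the three $G_1$--$G_2$ edges work in the first place.
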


\begin{proof}
Since $x_1^{m-1}x_2^{-1}x_1^{-1}x_2^{-1}$ (resp., $x_1^{m}x_2^{-n}$) is a relation which contains $x_1^{m-1}$ (resp., $x_1^{m}$), $m-2$ (resp., $m-1$) edges of color $3$ between
$G_1$ and $G_3$ are involved to yield  $x_1^{m-1}$ (resp., $x_1^{m}$). Since $\Gamma$ is bipartite, $m_{13}^{(3)}=2m-1$ and $G_1$ has $2m+2$ vertices, 
either white dots `$\circ$' vertices (resp., black dots  `$\bullet$' vertices) or  black dots `$\bullet$' vertices (resp., white dots `$\circ$' vertices) in $G_1$ are involved to yield $x_1^{m-1}$ (resp., $x_1^{m}$).
Again, the fact that $m^{(2)}_{12}=3$ and $\{x_2^{(2n+2)}, x_2^{(2n)}\}$ is a set of white dots `$\circ$' vertices implies that one of the vertices $x_2^{(2n+2)}, x_2^{(2n)}$ is the starting vertex for one of the two relations above. Thus, the black dot `$\bullet$' vertex $x_2^{(2n+1)}$ 
is the starting vertex for the other relation. Up to an automorphism, we can assume 
 that $x_2^{(2n)}$ and $x_2^{(2n+1)}$ are the starting vertices for the two relations above. Therefore, $x_1^{(2m)}$ and $x_1^{(2m+1)}$ are joined with vertices of $G_3$ by edges of color $3$.
Since the relation with starting vertex $x_2^{(2n+1)}$ contains  $x_1^{m-1}$ or $x_1^{m}$, either
$\{x_1^{(2m+1)}x_3^{(2n)},x_1^{(2m-1)}x_3^{(2n+2)}, \dots, x_1^{(7)}x_3^{(2m+2n-6)}\}$ or $\{x_1^{(2m+1)}x_3^{(2m+2n-2)}$, $x_1^{(1)}x_3^{(2m+2n-4)}$, 
$\dots, x_1^{(2m-7)}x_3^{(2n+4)}\}$
is the set of edges of color $3$. But, in the later case, $x_1^{(2m)}x_3^{(1)} \in \gamma^{-1} (3)$ as $x_1^{(2m)}$ and $x_1^{(2m+1)}$ are joined with vertices of $G_3$ by edges 
of color $3$. Then, the relation with starting vertex $x_2^{(2n)}$ contains $x_1x_2wx_2^{-1}$ for some $w \in F(S)$, which is a contradiction. Therefore, $x_1^{(2m+1)}x_3^{(2n)},x_1^{(2m-1)}x_3^{(2n+2)}, \dots, x_1^{(7)}x_3^{(2m+2n-6)} \in \gamma^{-1} (3)$ and hence
$x_1^{(2m)}x_3^{(2n+1)},x_1^{(2m-2)}x_3^{(2n+3)}, \dots, x_1^{(6)}x_3^{(2m+2n-5)} \in \gamma^{-1} (3)$. Since one relation contains $x_1^{m}$ and other contains $x_1^{m-1}$, 
$x_1^{(5)}x_3^{(2m+2n-4)} \in \gamma^{-1} (3)$ and hence $x_1^{(4)}, x_1^{(3)}, x_1^{(2)}$ are joined with $G_2$ with edges of color $3$. 
Thus, $x_1^{(2m+2)}x_3^{(2n-1)}, x_1^{(1)}x_3^{(2n-2)} \in \gamma^{-1} (3)$ as $m^{(3)}_{13}=2m-1$. Therefore, $x_2^{(2n+1)}$ is the starting vertex for the relation 
$x_1^{m}x_2^{-n}$.
In other words, we can say that $x_1^{(2m+1)}$ is the starting vertex for the relation $x_2^{n}x_1^{-m}$. Therefore, $x_1^{(2m+2)}$ is the starting vertex for the 
relation $x_2^{n-1}x_1^{-1}x_2^{-1}x_1^{-1}$. Thus, by  similar arguments as above, $x_2^{(2n-1)}x_3^{(2m+2n-3)},x_2^{(2n)}x_3^{(2m+2n-2)}, \dots, 
x_2^{(2n-5)}x_3^{(2n-3)}\in \gamma^{-1} (3)$.
Therefore, $x_1^{(2)}x_2^{(2n-4)},x_1^{(3)}x_2^{(2n-3)}, x_1^{(4)}x_2^{(2n-2)} \in \gamma^{-1} (3)$. 
Thus, the lemma follows.
\end{proof}

\begin{eg}[\textbf{\boldmath{Crystallization of $L(kq-1,q)$, for $k,q\geq 2$}}]\label{eg:lens1}
{\rm We know $\mathbb{Z}_{kq-1}$ has a presentation $\langle S \mid R \rangle$, where
$S=\{x_1,x_2\}$ and $R=\{x_1^{q}x_2^{-1},x_2^{k}x_1^{-1}\}$.
It is not difficult to prove that, $x_1^{q-1}x_2^{k-1}$ is the only independent
element in $\overline{R}$ of minimum weight.

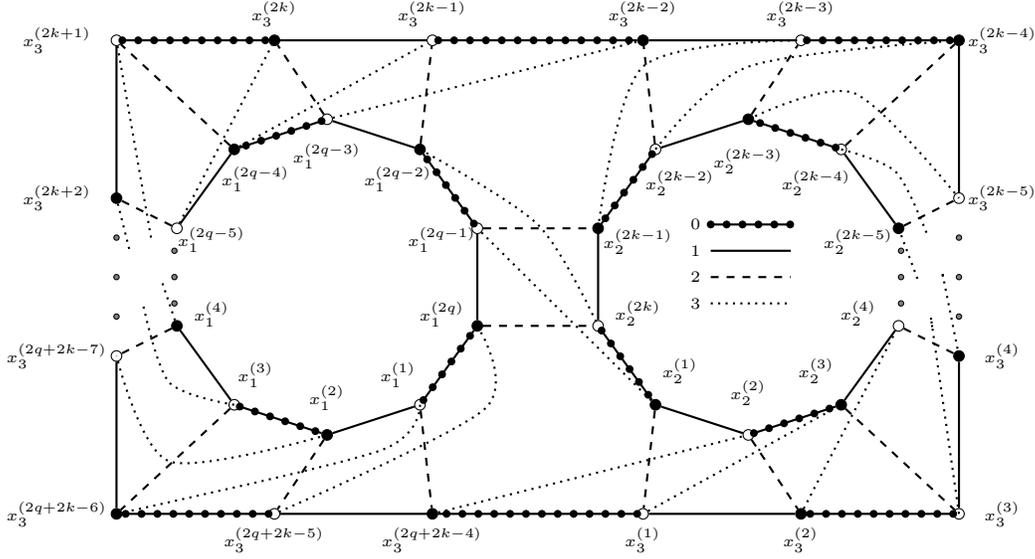
\begin{figure}[ht]
\tikzstyle{vert}=[circle, draw, fill=black!100, inner sep=0pt, minimum width=4pt] \tikzstyle{vertex}=[circle,
draw, fill=black!00, inner sep=0pt, minimum width=4pt] \tikzstyle{ver}=[] \tikzstyle{extra}=[circle, draw,
fill=black!50, inner sep=0pt, minimum width=2pt] \tikzstyle{edge} = [draw,thick,-] \centering
\begin{tikzpicture}[scale=0.7]

\begin{scope}[shift={(2,4)}]
\node[ver] (3) at (1,-4.5){\tiny{$3$}}; 
\node[ver] (2) at (1,-4){\tiny{$2$}};
\node[ver](1) at (1,-3.5){\tiny{$1$}}; 
\node[ver](0) at (1,-3){\tiny{$0$}}; 
\node[ver] (8) at (3,-4.5){}; 
\node[ver](7) at (3,-4){}; 
\node[ver](6) at (3,-3.5){}; 
\node[ver] (5) at (3,-3){};
\end{scope}

\begin{scope}[shift={(-4,0)}, rotate=54]
\foreach \x/\y in {72/x_1^{(2q-4)},144/x_1^{(4)},216/x_1^{(2)},288/x_1^{(2q)},0/x_1^{(2q-2)}}
{ \node[ver](\y) at (\x:2.3){\tiny{$\y$}};
    \node[vert] (\y) at (\x:3){};} 
    \foreach \x/\y in {108/x_1^{(2q-5)},180/x_1^{(3)},252/x_1^{(1)},324/x_1^{(2q-1)},36/x_1^{(2q-3)}}{ 
    \node[ver] (\y) at (\x:2.3){\tiny{$\y$}};
    \node[vertex] (\y) at (\x:3){};}
    \foreach \x/\y in {x_1^{(1)}/x_1^{(2)},x_1^{(3)}/x_1^{(4)},x_1^{(2q-3)}/x_1^{(2q-2)},x_1^{(2q-1)}/x_1^{(2q)},0/5,1/6}{
\path[edge] (\x) -- (\y);} 

\foreach \x/\y in {x_1^{(1)}/x_1^{(2q)},x_1^{(3)}/x_1^{(2)},x_1^{(2q-1)}/x_1^{(2q-2)},x_1^{(2q-3)}/x_1^{(2q-4)},x_1^{(2q-5)}/x_1^{(2q-4)}}
{\path[edge] (\x) -- (\y);} 

\foreach \x/\y in {x_1^{(1)}/x_1^{(2q)},x_1^{(3)}/x_1^{(2)},x_1^{(2q-1)}/x_1^{(2q-2)},x_1^{(2q-3)}/x_1^{(2q-4)},0/5}
{\draw [line width=3pt, line cap=round, dash pattern=on 0pt off 2\pgflinewidth]  (\x) -- (\y);} 
\end{scope}

\begin{scope}[shift={(4,0)}, rotate=-54]
\foreach \x/\y in {72/x_2^{(2k-5)},144/x_2^{(2k-3)},216/x_2^{(2k-1)},288/x_2^{(1)},0/x_2^{(3)}}
{ \node[ver](\y) at (\x:2.2){\tiny{$\y$}};
    \node[vert] (\y) at (\x:3){};} 
    \foreach \x/\y in {108/x_2^{(2k-4)},180/x_2^{(2k-2)},252/x_2^{(2k)},324/x_2^{(2)},36/x_2^{(4)}}{ 
    \node[ver] (\y) at (\x:2.2){\tiny{$\y$}};
    \node[vertex] (\y) at (\x:3){};}
    \foreach \x/\y in {x_2^{(2k)}/x_2^{(2k-1)},x_2^{(2k-2)}/x_2^{(2k-3)},x_2^{(4)}/x_2^{(3)},x_2^{(2)}/x_2^{(1)}}{
\path[edge] (\x) -- (\y);} 

\foreach \x/\y in {x_2^{(2k)}/x_2^{(1)},x_2^{(2k-2)}/x_2^{(2k-1)},x_2^{(2k-4)}/x_2^{(2k-3)},x_2^{(2k-5)}/x_2^{(2k-4)},x_2^{(2)}/x_2^{(3)}}
{\path[edge] (\x) -- (\y);} 

\foreach \x/\y in {x_2^{(2k)}/x_2^{(1)},x_2^{(2k-2)}/x_2^{(2k-1)},x_2^{(2k-4)}/x_2^{(2k-3)},x_2^{(2)}/x_2^{(3)}}
{\draw [line width=3pt, line cap=round, dash pattern=on 0pt off 2\pgflinewidth]  (\x) -- (\y);} 
\end{scope}

\begin{scope}[shift={(4,0)}, rotate=-90]
\foreach \x/\y in {-0.5/2.9,0/2.9,0.5/2.9}
{\node[extra] () at (\x,\y){};} 
\end{scope}
\begin{scope}[shift={(-4,0)}, rotate=90]
\foreach \x/\y in {-0.5/2.9,0/2.9,0.5/2.9}
{\node[extra] () at (\x,\y){};} 
\end{scope}

\begin{scope}[]
\foreach \x/\y/\z/\w in {8/4.5/8.7/x_3^{(2k-4)},8/-1.5/8.7/x_3^{(4)}}
{\node[ver](\w) at (\z,\y){\tiny{$~~\w$}};
\node[vert] (\w) at (\x,\y){};} 
\foreach \x/\y/\z/\w in {8/1.5/8.7/x_3^{(2k-5)},8/-4.5/8.7/x_3^{(3)}}
{\node[ver](\w) at (\z,\y){\tiny{$~~\w$}};
\node[vertex] (\w) at (\x,\y){};} 
\foreach \x/\y/\z/\w in {-8/4.5/-9/x_3^{(2k+1)},-8/-1.5/-9/x_3^{(2q+2k-7)}}
{\node[ver](\w) at (\z,\y){\tiny{$\w~~$}};
\node[vertex] (\w) at (\x,\y){};} 
\foreach \x/\y/\z/\w in {-8/1.5/-9/x_3^{(2k+2)},-8/-4.5/-9/x_3^{(2q+2k-6)}}
{\node[ver](\w) at (\z,\y){\tiny{$\w~~$}};
\node[vert] (\w) at (\x,\y){};} 

\foreach \x/\y/\z/\w in {-5/-4.5/-5/x_3^{(2q+2k-5)},5/4.5/5/x_3^{(2k-3)},-2/4.5/5/x_3^{(2k-1)},2/-4.5/-5/x_3^{(1)}}
{\node[ver](\w) at (\x,\z){\tiny{$\w$}};
\node[vertex] (\w) at (\x,\y){};} 
\foreach \x/\y/\z/\w in {-5/4.5/5/x_3^{(2k)},5/-4.5/-5/x_3^{(2)},-2/-4.5/-5/x_3^{(2q+2k-4)},2/4.5/5/x_3^{(2k-2)}}
{\node[ver](\w) at (\x,\z){\tiny{$\w$}};
\node[vert] (\w) at (\x,\y){};} 

\foreach \x/\y in {-8/-0.75,-8/0,-8/0.75,8/-0.75,8/0,8/0.75}
{\node[extra] () at (\x,\y){};} 

\foreach \x/\y in {x_3^{(1)}/x_3^{(2)},x_3^{(3)}/x_3^{(4)},x_3^{(2k-5)}/x_3^{(2k-4)},x_3^{(2k-3)}/x_3^{(2k-2)},x_3^{(2k-1)}/x_3^{(2k)},x_3^{(2k+1)}/x_3^{(2k+2)},x_3^{(2q+2k-5)}/x_3^{(2q+2k-4)},x_3^{(2q+2k-6)}/x_3^{(2q+2k-7)}}
{\path[edge] (\x) -- (\y);}

\foreach \x/\y in {x_3^{(1)}/x_3^{(2q+2k-4)},x_3^{(3)}/x_3^{(2)},x_3^{(2k-3)}/x_3^{(2k-4)},x_3^{(2k-1)}/x_3^{(2k-2)},x_3^{(2k+1)}/x_3^{(2k)},x_3^{(2q+2k-5)}/x_3^{(2q+2k-6)}}
{\path[edge] (\x) -- (\y);}

\foreach \x/\y in {x_3^{(1)}/x_3^{(2q+2k-4)},x_3^{(3)}/x_3^{(2)},x_3^{(2k-3)}/x_3^{(2k-4)},x_3^{(2k-1)}/x_3^{(2k-2)},x_3^{(2k+1)}/x_3^{(2k)},x_3^{(2q+2k-5)}/x_3^{(2q+2k-6)}}
{\draw [line width=3pt, line cap=round, dash pattern=on 0pt off 2\pgflinewidth]  (\x) -- (\y);}

\end{scope}

\foreach \x/\y in {x_1^{(2q-1)}/x_2^{(2k-1)},x_1^{(2q)}/x_2^{(2k)},x_2^{(1)}/x_3^{(1)},x_2^{(2)}/x_3^{(2)},x_2^{(3)}/x_3^{(3)},x_2^{(4)}/x_3^{(4)},
x_2^{(2k-5)}/x_3^{(2k-5)},x_2^{(2k-4)}/x_3^{(2k-4)},x_2^{(2k-3)}/x_3^{(2k-3)},x_2^{(2k-2)}/x_3^{(2k-2)},x_1^{(2q-2)}/x_3^{(2k-1)},x_1^{(2q-3)}/x_3^{(2k)},x_1^{(2q-4)}/x_3^{(2k+1)},
x_1^{(2q-5)}/x_3^{(2k+2)},x_1^{(4)}/x_3^{(2q+2k-7)},x_1^{(3)}/x_3^{(2q+2k-6)},x_1^{(2)}/x_3^{(2q+2k-5)},x_1^{(1)}/x_3^{(2q+2k-4)},2/7}{
\path[edge, dashed] (\x) -- (\y);} 

\draw[edge, dotted] plot [smooth,tension=0.5] coordinates{(x_1^{(2q)}) (-1,-2.5) (x_3^{(2q+2k-5)}) };
\draw[edge, dotted] plot [smooth,tension=0.5] coordinates{(x_1^{(1)}) (-3,-3.3) (x_3^{(2q+2k-6)}) };
\draw[edge, dotted] plot [smooth,tension=0.5] coordinates{(x_1^{(2)}) (-7,-3.5) (x_3^{(2q+2k-7)}) };
\draw[edge, dotted] plot [smooth,tension=0.5] coordinates{(x_1^{(2q-2)}) (0,1) (x_2^{(2k)}) };
\draw[edge, dotted] plot [smooth,tension=0.5] coordinates{(x_2^{(2k-1)}) (2,4) (x_3^{(2k-3)}) };
\draw[edge, dotted] plot [smooth,tension=0.5] coordinates{(x_2^{(2k-2)}) (4,4) (x_3^{(2k-4)}) };
\draw[edge, dotted] plot [smooth,tension=0.5] coordinates{(x_2^{(2k-3)}) (6,3.5) (x_3^{(2k-5)}) };
\draw[edge, dotted] plot [smooth,tension=0.5] coordinates{(x_1^{(3)}) (-7,-2) (-7.5,-0.5) };
\draw[edge, dotted] plot [smooth,tension=0.5] coordinates{(x_2^{(2k-4)}) (6.8,2) (7.4,0.8) };
\draw[edge, dotted] plot [smooth,tension=0.5] coordinates{(x_3^{(4)}) (7.7,0) (7.7,0) };
\draw[edge, dotted] plot [smooth,tension=0.5] coordinates{(x_2^{(2k-5)}) (7.2,0) (7.2,0) };
\draw[edge, dotted] plot [smooth,tension=0.5] coordinates{(x_3^{(3)}) (7.5,-1) (7.5,-1) };
\node[ver](a2) at (-7.2,0.1){};
\node[ver](a1) at (-7.3,0.5){};
 \node[ver](a3) at (-7.7,0.3){};
\foreach \x/\y in {3/8,x_1^{(2q-5)}/x_3^{(2k)},x_1^{(2q-4)}/x_3^{(2k-1)},x_1^{(2q-3)}/x_3^{(2k-2)},x_1^{(2q-1)}/x_2^{(1)},x_3^{(2q+2k-4)}/x_2^{(2)},x_3^{(1)}/x_2^{(3)},x_3^{(2)}/x_2^{(4)},
x_3^{(2k+1)}/a1,x_1^{(4)}/a2,x_3^{(2k+2)}/a3}{
\path[edge, dotted] (\x) -- (\y);} 

\end{tikzpicture}
\caption{Crystallization of $L(kq-1,q)$ for $k,q \geq 2$}\label{fig:lens1}
\end{figure}

Therefore, $\mathcal{R}=R\cup \{x_1^{q-1}x_2^{k-1}\}$. Thus,  $m^{(c)}_{12}=2$, $m^{(c)}_{13}=2q-2$, $m^{(c)}_{23}=2k-2$ for $2\leq c \leq 3$, 
$(n_1,n_2,n_3)=(2q,2k,2q+2k-4)$ and $G_i=C_{n_{i}}(x^{(1)}_i, \dots, x^{(n_i)}_i)$ for $1\leq i \leq 3$ as in Figure \ref{fig:lens1}. 
Choose $(n_i,n_j,n_l)=(n_3,n_1,n_2)$. Thus, $\{x^{(1)}_3x^{(1)}_2,\dots, x_3^{(2k-2)}x^{(2k-2)}_2\}$, 
$\{x^{(2q+2k-4)}_3x^{(1)}_1,\dots, x^{(2k-1)}_3x^{(2q-2)}_1\}$ 
and $\{x^{(2q-1)}_1x^{(2k-1)}_2, x^{(2q)}_1x^{(2k)}_2\}$ are the sets of edges of color $2$. 
Here the only choice for the triplet $(q_1,q_2,q_3)$ is $(q-1,k,k-1)$ and for the path 
$P_5$ is $P_5(x_1^{(2q-2)}, x_1^{(2q-3)}$, $x_3^{(2k-2)}, x_3^{(2k-3)}, x_2^{(2k-1)}, x_2^{(2k)})$, they give a regular bipartite $4$-colored graph $(\Gamma, \gamma)$
which yields  $(\langle S \mid R\rangle, R)$  (cf. Lemma \ref{lemma:unique-lens1}). 
Therefore, by Theorem \ref{theorem:algorithm}, $(\Gamma, \gamma)$ (cf. Figure \ref{fig:lens1}) is a crystallization of a
closed connected orientable $3$-manifold and minimal with respect to $(\langle S \mid R\rangle, R)$.
Here $(\Gamma, \gamma)$ is a crystallization of $L(kq-1,q)$ as $(\Gamma, \gamma)$ is isomorphic to the graph $\mathcal{M}_{k,q}$ (cf. \cite[Subsection 5.1]{bd14}) and $\mathcal{M}_{k,q}$ is a crystallization of $L(kq-1,q)$.

Observe that (i) $\#V(\Gamma)= 4(q+k-1) =2q+2k+2(q+k-2) = \lambda(\langle S \mid R\rangle, R)$, and
(ii) $\Gamma_{\{0,3\}}$ (resp., $\Gamma_{\{1,2\}}$) is a union of $(q+k-1)$ $4$-cycles and $\Gamma_{\{0,2\}}$ (resp., $\Gamma_{\{1,3\}}$) is of 
type $(q+k-4)C_4 \sqcup 2C_6$ as $\#V(\Gamma)= 4(q+k-1)$ and $m^{(c)}_{ij} \geq 1$ for $2\leq c \leq 3$ and $1 \leq i < j \leq 3$.}
\end{eg}

\begin{lemma}\label{lemma:unique-lens1}
Let the presentation $(\langle S \mid R\rangle, R)$  and $q_1, q_2, q_3, P_5$ be as in Example $\ref{eg:lens1}$. Then, the choice of the triplet 
$(q_1,q_2,q_3)$ and the path $P_5$ for which the $4$-colored graph $(\Gamma,\gamma)$ yields $(\langle S \mid R\rangle, R)$, is unique.
\end{lemma}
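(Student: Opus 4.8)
The plan is to mirror the argument used for Lemma~\ref{lemma:unique-polyhedra}, since by the end of Step~6 the subgraph $\Gamma_{\{0,1,2\}}$ (and in particular all color-$0$, color-$1$ and color-$2$ edges) is already pinned down, and the only remaining freedom in the $4$-colored graph is the placement of the color-$3$ edges, parametrized in Step~7 by the triplet $(q_1,q_2,q_3)$ together with a choice between the two admissible paths $P_5$. Thus I must show that among the $(n_1n_2n_3)/4$ a priori possibilities, exactly one reproduces $\mathcal{R}=\{x_1^{q}x_2^{-1},\,x_2^{k}x_1^{-1},\,x_1^{q-1}x_2^{k-1}\}$ when the words are read off from the cycles of $\Gamma_{\{2,3\}}$ via Eq.~\eqref{tildar}.

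First I would record the maximal runs carried by each relation: $r_1=x_1^{q}x_2^{-1}$ carries the run $x_1^{q}$, the relation $r_2=x_2^{k}x_1^{-1}$ carries $x_2^{k}$, and $r_3=x_1^{q-1}x_2^{k-1}$ carries both $x_1^{q-1}$ and $x_2^{k-1}$. Recalling that, after deletion of the suppressed generator $x_3$, such a run in a reduced relation is realized by a chain of $\Gamma_{\{2,3\}}$ alternating between the $x_1$-component $G_1$ (resp. $x_2$-component $G_2$) and $G_3$, the two $x_1$-runs consume the color-$3$ edges between $G_1$ and $G_3$ and the two $x_2$-runs consume those between $G_2$ and $G_3$. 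Combining the edge counts $m_{13}^{(3)}=2q-2$ and $m_{23}^{(3)}=2k-2$ with the bipartiteness of $\Gamma$, I would argue, exactly as in the polyhedral case, that the two $x_1$-runs must occupy the white and the black vertex classes of $G_1$ separately, and likewise for the $x_2$-runs in $G_2$.

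Next I would use the two color-$2$ edges $x^{(2q-1)}_1x^{(2k-1)}_2$ and $x^{(2q)}_1x^{(2k)}_2$ (that is, $m_{12}^{(2)}=2$) to locate the junctions where an $x_1$-run meets an $x_2$-run, thereby identifying the admissible starting vertices of the two relations involving both generators. Up to the automorphism already exploited in the construction this fixes the starting vertices and forces $q_1=q-1$; propagating the color-$3$ edges along each alternating chain, the requirement that $x_2^{k}$ (from $r_2$) and $x_2^{k-1}$ (from $r_3$) be read off correctly then determines $q_2=k$ and $q_3=k-1$ and selects the displayed path $P_5(x_1^{(2q-2)},x_1^{(2q-3)},x_3^{(2k-2)},x_3^{(2k-3)},x_2^{(2k-1)},x_2^{(2k)})$.

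The hard part will be ruling out the second admissible path of Step~7. I expect that, just as in Lemma~\ref{lemma:unique-polyhedra}, the opposite orientation of the color-$3$ edges produces in one of the $\{2,3\}$-cycles a subword of the form $x_1x_2\,w\,x_2^{-1}$, a genuine conjugation that cannot be cancelled, so the resulting word is not freely equal to any member of $\mathcal{R}$; deriving this contradiction cleanly is the crux and eliminates every choice but the stated one. Once this is established, uniqueness of both the triplet $(q_1,q_2,q_3)$ and the path $P_5$ follows, and the lemma is proved.
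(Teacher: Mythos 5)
Your proof outline follows the same strategy as the paper's: fix the starting vertex of the relation $x_1^{q}x_2^{-1}$ up to automorphism, use bipartiteness together with the counts $m_{12}^{(3)}=2$, $m_{13}^{(3)}=2q-2$, $m_{23}^{(3)}=2k-2$ and the fact that the $m_{ij}^{(3)}$ color-$3$ edges between $G_i$ and $G_j$ must form $m_{ij}^{(3)}-1$ bi-colored $4$-cycles to propagate forced edges, and then eliminate the alternative configuration. The problem is that the elimination step --- which you yourself call ``the crux'' --- is precisely the content of the lemma, and you do not carry it out: you only say you ``expect'' a contradiction. That is a genuine gap, not a deferred routine check, because everything else in the argument (the forced values $q_1=q-1$, $q_2=k$, $q_3=k-1$ and the choice of $P_5$) is downstream of it.

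Moreover, the mechanism you anticipate for that contradiction is not the one that actually arises here. You predict a conjugation subword $x_1x_2\,w\,x_2^{-1}$ as in Lemma \ref{lemma:unique-polyhedra}, but for $\mathcal{R}=\{x_1^{q}x_2^{-1},\,x_2^{k}x_1^{-1},\,x_1^{q-1}x_2^{k-1}\}$ the paper's exclusion works differently: if the chain of color-$3$ edges between $G_1$ and $G_3$ is oriented the wrong way (starting with $x_1^{(2q)}x_3^{(2k-1)}, x_1^{(2q-2)}x_3^{(2k+1)},\dots$), then the $4$-cycle condition forces the complementary edges $x_1^{(2q-1)}x_3^{(2k)},\dots$, and the resulting $\{2,3\}$-cycle reads off a word of the form $x_1^{q-2}wx_2^{-1}$, which matches no relation in $\mathcal{R}$ (the admissible maximal $x_1$-runs being $q$, $q-1$ and $1$). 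So the contradiction is a run-length mismatch, not an uncancellable conjugation. Your run-length bookkeeping in the second paragraph is consistent with this, and a minor imprecision there (the classes of $G_1$ are filled by the run $x_1^{q}$ on one side and by $x_1^{q-1}$ together with the lone $x_1^{-1}$ of $x_2^{k}x_1^{-1}$ on the other, not by the two runs alone) is easily repaired; but without actually deriving the exclusion, and with the wrong target in mind for it, the proof is incomplete.
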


\begin{proof}

Clearly, either $x_2^{(2k)}$ or $x_2^{(2k-1)}$ is the starting vertex for  the relation $x_1^{q}x_2^{-1}$.  Up to an automorphism, we can assume
$x_2^{(2k)}$ is the starting vertex for the relation $x_1^{q}x_2^{-1}$. Since $\Gamma$ is bipartite,
all the black dots `$\bullet$' vertices in $G_1$ are involved to 
yield the relation $x_1^{q}x_2^{-1}$. Thus, either $\{x_1^{(2q)}x_3^{(2k-1)},x_1^{(2q-2)}x_3^{(2k+1)}, \dots$, 
$x_1^{(4)}x_3^{(2q+2k-5)}\}$ or $\{x_1^{(2q)}x_3^{(2q+2k-5)}, x_1^{(2)}x_3^{(2q+2k-7)}$, 
$\dots, x_1^{(2q-4)}x_3^{(2k-1)}\}$ is the set of edges of color $3$. Since the $2q-2$ edges of color $3$ between $G_1$ and 
$G_3$ form $2q-3$ bi-colored $4$-cycles  in $\Gamma_{\{0,1,3\}}$, in the first case, $x_1^{(2q-1)}x_3^{(2k)},x_1^{(2q-3)}x_3^{(2k+2)}$, 
$\dots, x_1^{(5)}x_3^{(2q+2k-6)} \in \gamma^{-1}(3)$. This gives 
a relation $x_1^{q-2}wx_2^{-1}$ other than $x_1^{q}x_2^{-1}$ for some $w\in F(S)$. But, there does not exist such a relation. 
So, $x_1^{(2q)}x_3^{(2q+2k-5)}$, $x_1^{(2)}x_3^{(2q+2k-7)}$, 
$\dots, x_1^{(2q-4)}x_3^{(2k-1)}$ $ \in \gamma^{-1}(3)$ and hence $x_1^{(1)}x_3^{(2q+2k-6)}, x_1^{(3)}x_3^{(2q+2k-8)},\dots$,
$x_1^{(2q-5)}$ $x_3^{(2k)} \in \gamma^{-1}(3)$. Since $x_2^{(2k)}$ is the starting vertex for the relation $x_1^{q}x_2^{-1}$, we have
$x_1^{(2q-2)}x_2^{(2k)} \in \gamma^{-1}(3)$ and 
hence $x_1^{(2q-1)}x_2^{(1)}, x_1^{(2q-3)}x_3^{(2k-2)} \in \gamma^{-1}(3)$ as 
all components of $\Gamma_{\{0,3\}}$ are $4$-cycle. 
By similar arguments as above, $x_2^{(2k-1)}x_3^{(2k-3)}$, $x_2^{(2k-2)}x_3^{(2k-4)},\dots$, $x_2^{(2)}x_3^{(2q+2k-4)} \in \gamma^{-1}(3)$ 
as $x_1^{(2q-1)}$ is the starting vertex for the relation $x_2^kx_1^{-1}$.
Thus, the lemma follows.
\end{proof}

\begin{eg}[\textbf{\boldmath{Crystallization of $L((k-1)q+1,q)$ for $k,q\geq 2$}}]\label{eg:lens2}
{\rm We know $\mathbb{Z}_{(k-1)q+1}$  has a presentation $\langle S \mid R \rangle$, where
$S=\{x_1,x_2\}$ and $R=\{x_1^{q}x_2^{-1},x_1^{q-1}x_2^{-k}\}$.

\begin{figure}[ht]
\tikzstyle{vert}=[circle, draw, fill=black!100, inner sep=0pt, minimum width=4pt] \tikzstyle{vertex}=[circle,
draw, fill=black!00, inner sep=0pt, minimum width=4pt] \tikzstyle{ver}=[] \tikzstyle{extra}=[circle, draw,
fill=black!50, inner sep=0pt, minimum width=2pt] \tikzstyle{edge} = [draw,thick,-] \centering
\begin{tikzpicture}[scale=0.7]

\begin{scope}[shift={(2,4)}]
\node[ver] (3) at (1,-4.5){\tiny{$3$}}; 
\node[ver] (2) at (1,-4){\tiny{$2$}};
\node[ver](1) at (1,-3.5){\tiny{$1$}}; 
\node[ver](0) at (1,-3){\tiny{$0$}}; 
\node[ver] (8) at (3,-4.5){}; 
\node[ver](7) at (3,-4){}; 
\node[ver](6) at (3,-3.5){}; 
\node[ver] (5) at (3,-3){};
\end{scope}

\begin{scope}[shift={(-4,0)}, rotate=54]
\foreach \x/\y in {72/x_1^{(2q-4)},144/x_1^{(4)},216/x_1^{(2)},288/x_1^{(2q)},0/x_1^{(2q-2)}}
{ \node[ver](\y) at (\x:2.3){\tiny{$\y$}};
    \node[vert] (\y) at (\x:3){};} 
    \foreach \x/\y in {108/x_1^{(2q-5)},180/x_1^{(3)},252/x_1^{(1)},324/x_1^{(2q-1)},36/x_1^{(2q-3)}}{ 
    \node[ver] (\y) at (\x:2.3){\tiny{$\y$}};
    \node[vertex] (\y) at (\x:3){};}
    \foreach \x/\y in {x_1^{(1)}/x_1^{(2)},x_1^{(3)}/x_1^{(4)},x_1^{(2q-3)}/x_1^{(2q-2)},x_1^{(2q-1)}/x_1^{(2q)},0/5,1/6}{
\path[edge] (\x) -- (\y);} 

\foreach \x/\y in {x_1^{(1)}/x_1^{(2q)},x_1^{(3)}/x_1^{(2)},x_1^{(2q-1)}/x_1^{(2q-2)},x_1^{(2q-3)}/x_1^{(2q-4)},x_1^{(2q-5)}/x_1^{(2q-4)}}
{\path[edge] (\x) -- (\y);} 

\foreach \x/\y in {x_1^{(1)}/x_1^{(2q)},x_1^{(3)}/x_1^{(2)},x_1^{(2q-1)}/x_1^{(2q-2)},x_1^{(2q-3)}/x_1^{(2q-4)},0/5}
{\draw [line width=3pt, line cap=round, dash pattern=on 0pt off 2\pgflinewidth]  (\x) -- (\y);} 
\end{scope}

\begin{scope}[shift={(4,0)}, rotate=-54]
\foreach \x/\y in {72/x_2^{(2k-5)},144/x_2^{(2k-3)},216/x_2^{(2k-1)},288/x_2^{(1)},0/x_2^{(3)}}
{ \node[ver](\y) at (\x:2.2){\tiny{$\y$}};
    \node[vert] (\y) at (\x:3){};} 
    \foreach \x/\y in {108/x_2^{(2k-4)},180/x_2^{(2k-2)},252/x_2^{(2k)},324/x_2^{(2)},36/x_2^{(4)}}{ 
    \node[ver] (\y) at (\x:2.2){\tiny{$\y$}};
    \node[vertex] (\y) at (\x:3){};}
    \foreach \x/\y in {x_2^{(2k)}/x_2^{(2k-1)},x_2^{(2k-2)}/x_2^{(2k-3)},x_2^{(4)}/x_2^{(3)},x_2^{(2)}/x_2^{(1)}}{
\path[edge] (\x) -- (\y);} 

\foreach \x/\y in {x_2^{(2k)}/x_2^{(1)},x_2^{(2k-2)}/x_2^{(2k-1)},x_2^{(2k-4)}/x_2^{(2k-3)},x_2^{(2k-5)}/x_2^{(2k-4)},x_2^{(2)}/x_2^{(3)}}
{\path[edge] (\x) -- (\y);} 

\foreach \x/\y in {x_2^{(2k)}/x_2^{(1)},x_2^{(2k-2)}/x_2^{(2k-1)},x_2^{(2k-4)}/x_2^{(2k-3)},x_2^{(2)}/x_2^{(3)}}
{\draw [line width=3pt, line cap=round, dash pattern=on 0pt off 2\pgflinewidth]  (\x) -- (\y);} 
\end{scope}

\begin{scope}[shift={(4,0)}, rotate=-90]
\foreach \x/\y in {-0.5/2.9,0/2.9,0.5/2.9}
{\node[extra] () at (\x,\y){};} 
\end{scope}
\begin{scope}[shift={(-4,0)}, rotate=90]
\foreach \x/\y in {-0.5/2.9,0/2.9,0.5/2.9}
{\node[extra] () at (\x,\y){};} 
\end{scope}

\begin{scope}[]
\foreach \x/\y/\z/\w in {8/4.5/8.7/x_3^{(2k-4)},8/-1.5/8.7/x_3^{(4)}}
{\node[ver](\w) at (\z,\y){\tiny{$~~\w$}};
\node[vert] (\w) at (\x,\y){};} 
\foreach \x/\y/\z/\w in {8/1.5/8.7/x_3^{(2k-5)},8/-4.5/8.7/x_3^{(3)}}
{\node[ver](\w) at (\z,\y){\tiny{$~~\w$}};
\node[vertex] (\w) at (\x,\y){};} 
\foreach \x/\y/\z/\w in {-8/4.5/-9/x_3^{(2k+1)},-8/-1.5/-9/x_3^{(2q+2k-7)}}
{\node[ver](\w) at (\z,\y){\tiny{$\w~~$}};
\node[vertex] (\w) at (\x,\y){};} 
\foreach \x/\y/\z/\w in {-8/1.5/-9/x_3^{(2k+2)},-8/-4.5/-9/x_3^{(2q+2k-6)}}
{\node[ver](\w) at (\z,\y){\tiny{$\w~~$}};
\node[vert] (\w) at (\x,\y){};} 

\foreach \x/\y/\z/\w in {-5/-4.5/-5/x_3^{(2q+2k-5)},5/4.5/5/x_3^{(2k-3)},-2/4.5/5/x_3^{(2k-1)},2/-4.5/-5/x_3^{(1)}}
{\node[ver](\w) at (\x,\z){\tiny{$\w$}};
\node[vertex] (\w) at (\x,\y){};} 
\foreach \x/\y/\z/\w in {-5/4.5/5/x_3^{(2k)},5/-4.5/-5/x_3^{(2)},-2/-4.5/-5/x_3^{(2q+2k-4)},2/4.5/5/x_3^{(2k-2)}}
{\node[ver](\w) at (\x,\z){\tiny{$\w$}};
\node[vert] (\w) at (\x,\y){};} 

\foreach \x/\y in {-8/-0.75,-8/0,-8/0.75,8/-0.75,8/0,8/0.75}
{\node[extra] () at (\x,\y){};} 

\foreach \x/\y in {x_3^{(1)}/x_3^{(2)},x_3^{(3)}/x_3^{(4)},x_3^{(2k-5)}/x_3^{(2k-4)},x_3^{(2k-3)}/x_3^{(2k-2)},x_3^{(2k-1)}/x_3^{(2k)},x_3^{(2k+1)}/x_3^{(2k+2)},x_3^{(2q+2k-5)}/x_3^{(2q+2k-4)},x_3^{(2q+2k-6)}/x_3^{(2q+2k-7)}}
{\path[edge] (\x) -- (\y);}

\foreach \x/\y in {x_3^{(1)}/x_3^{(2q+2k-4)},x_3^{(3)}/x_3^{(2)},x_3^{(2k-3)}/x_3^{(2k-4)},x_3^{(2k-1)}/x_3^{(2k-2)},x_3^{(2k+1)}/x_3^{(2k)},x_3^{(2q+2k-5)}/x_3^{(2q+2k-6)}}
{\path[edge] (\x) -- (\y);}

\foreach \x/\y in {x_3^{(1)}/x_3^{(2q+2k-4)},x_3^{(3)}/x_3^{(2)},x_3^{(2k-3)}/x_3^{(2k-4)},x_3^{(2k-1)}/x_3^{(2k-2)},x_3^{(2k+1)}/x_3^{(2k)},x_3^{(2q+2k-5)}/x_3^{(2q+2k-6)}}
{\draw [line width=3pt, line cap=round, dash pattern=on 0pt off 2\pgflinewidth]  (\x) -- (\y);}

\end{scope}

\foreach \x/\y in {x_1^{(2q-1)}/x_2^{(2k-1)},x_1^{(2q)}/x_2^{(2k)},x_2^{(1)}/x_3^{(1)},x_2^{(2)}/x_3^{(2)},x_2^{(3)}/x_3^{(3)},x_2^{(4)}/x_3^{(4)},
x_2^{(2k-5)}/x_3^{(2k-5)},x_2^{(2k-4)}/x_3^{(2k-4)},x_2^{(2k-3)}/x_3^{(2k-3)},x_2^{(2k-2)}/x_3^{(2k-2)},x_1^{(2q-2)}/x_3^{(2k-1)},x_1^{(2q-3)}/x_3^{(2k)},x_1^{(2q-4)}/x_3^{(2k+1)},
x_1^{(2q-5)}/x_3^{(2k+2)},x_1^{(4)}/x_3^{(2q+2k-7)},x_1^{(3)}/x_3^{(2q+2k-6)},x_1^{(2)}/x_3^{(2q+2k-5)},x_1^{(1)}/x_3^{(2q+2k-4)},2/7}{
\path[edge, dashed] (\x) -- (\y);} 

\draw[edge, dotted] plot [smooth,tension=0.5] coordinates{(x_1^{(2q)}) (-1,-2.5) (x_3^{(2q+2k-5)}) };
\draw[edge, dotted] plot [smooth,tension=0.5] coordinates{(x_1^{(1)}) (-3,-3.3) (x_3^{(2q+2k-6)}) };
\draw[edge, dotted] plot [smooth,tension=0.5] coordinates{(x_1^{(2)}) (-7,-3.5) (x_3^{(2q+2k-7)}) };
\draw[edge, dotted] plot [smooth,tension=0.5] coordinates{(x_1^{(2q-3)}) (0,2.5) (x_2^{(2k-1)}) };
\draw[edge, dotted] plot [smooth,tension=0.5] coordinates{(x_2^{(2k-2)}) (-3,4) (x_1^{(2q-4)}) };
\draw[edge, dotted] plot [smooth,tension=0.5] coordinates{(x_1^{(2q-1)}) (0,-2) (x_3^{(2q+2k-4)}) };
\draw[edge, dotted] plot [smooth,tension=0.5] coordinates{(x_1^{(3)}) (-7,-2) (-7.5,-0.5) };
\draw[edge, dotted] plot [smooth,tension=0.5] coordinates{(x_1^{(2q-2)}) (0,0) (x_3^{(1)}) };
\draw[edge, dotted] plot [smooth,tension=0.5] coordinates{(x_2^{(2k)}) (2,-3) (x_3^{(2)}) };
\draw[edge, dotted] plot [smooth,tension=0.5] coordinates{(x_2^{(1)}) (3,-3.2) (x_3^{(3)}) };
\draw[edge, dotted] plot [smooth,tension=0.5] coordinates{(x_2^{(2)}) (6,-3.5) (x_3^{(4)}) };
\draw[edge, dotted] plot [smooth,tension=0.5] coordinates{(x_3^{(2k-5)}) (7.8,0.5) (7.8,0.5) };
\draw[edge, dotted] plot [smooth,tension=0.5] coordinates{(x_2^{(3)}) (7,-2) (7.5,-0.8) };
\draw[edge, dotted] plot [smooth,tension=0.5] coordinates{(x_2^{(4)}) (7.2,0) (7.2,0) };
\draw[edge, dotted] plot [smooth,tension=0.5] coordinates{(x_3^{(2k-4)}) (7.5,1) (7.5,1) };
\node[ver](a2) at (-7.2,0.1){};
\node[ver](a1) at (-7.3,0.5){};
 \node[ver](a3) at (-7.7,0.3){};
\foreach \x/\y in {3/8,x_1^{(2q-5)}/x_3^{(2k)},x_3^{(2k-1)}/x_2^{(2k-3)},x_3^{(2k-2)}/x_2^{(2k-4)},x_3^{(2k-3)}/x_2^{(2k-5)},
x_3^{(2k+1)}/a1,x_1^{(4)}/a2,x_3^{(2k+2)}/a3}{
\path[edge, dotted] (\x) -- (\y);} 

\end{tikzpicture}
\caption{Crystallization of $L((k-1)q+1,q)$ for $k,q \geq 2$}\label{fig:lens2}
\end{figure}
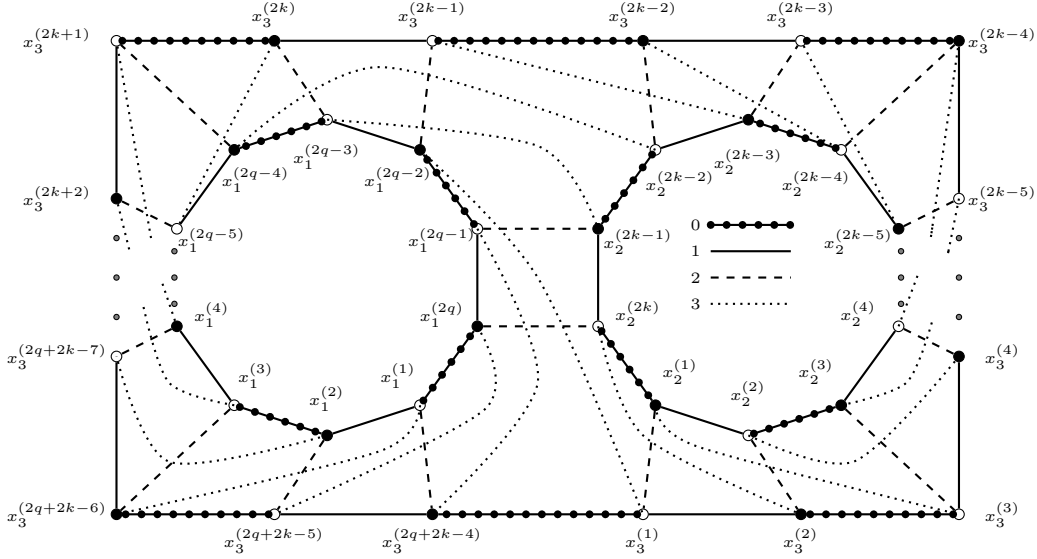

It is not difficult to prove that, $x_1x_2^{k}$ is the only independent element in $\overline{R}$ of minimum weight.
Therefore, $\mathcal{R}=R\cup \{x_1x_2^{k}\}$. Thus, $m^{(c)}_{12}=2$, $m^{(c)}_{13}=2q-2$, $m^{(c)}_{23}=2k-2$ for $2\leq c \leq 3$, $(n_1,n_2,n_3)=(2q,2k,2q+2k-4)$.
Choose $(n_i,n_j,n_k)=(n_3,n_1,n_2)$. Therefore, the $3$-colored graph with colors $0,1,2$ as in previous example. Here, the only choice for the triplet
$(q_1,q_2,q_3)$ is $(q-1,k,1)$ and for the path 
$P_5$ is $P_5(x_1^{(2q-3)},  x_1^{(2q-2)}, x_3^{(1)}, x_3^{(2)}, x_2^{(2k)}, x_2^{(2k-1)})$, they give a regular bipartite $4$-colored graph
$(\Gamma,\gamma)$ 
which yields  $(\langle S \mid R\rangle, R)$ (cf. Lemma \ref{lemma:unique-lens2}). 
Therefore, by Theorem \ref{theorem:algorithm}, $(\Gamma, \gamma)$ (cf. Figure \ref{fig:lens2}) is a crystallization of a
closed connected orientable $3$-manifold and minimal with respect to $(\langle S \mid R\rangle, R)$.
Here $(\Gamma, \gamma)$ is a $4(k+q-1)$-vertex crystallization of $L((k-1)q-1,q)$ as $(\Gamma, \gamma)$ is isomorphic to the graph  $\mathcal{N}_{k-1,q}$  (cf. \cite[Subsection 5.2]{bd14}) and  $\mathcal{N}_{k-1,q}$  is a crystallization of $ L((k-1)q-1,q)$.

}
\end{eg}

\begin{lemma}\label{lemma:unique-lens2}
Let the presentation $(\langle S \mid R\rangle, R)$  and $q_1, q_2, q_3, P_5$ be as in Example $\ref{eg:lens2}$. 
Then, the choice of the triplet 
$(q_1,q_2,q_3)$ and the path $P_5$  for which the $4$-colored graph $(\Gamma,\gamma)$ yields $(\langle S \mid R\rangle, R)$, is unique.
\end{lemma}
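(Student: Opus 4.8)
The plan is to follow the strategy of Lemmas \ref{lemma:unique-polyhedra} and \ref{lemma:unique-lens1}, using that $\Gamma_{\{0,1,2\}}$ has already been built exactly as in Example \ref{eg:lens1}; the only remaining freedom is the placement of the color-$3$ edges, which is precisely the data $(q_1,q_2,q_3)$ together with the path $P_5$. First I would record the run-structure of the three relations in $\mathcal{R}$: $r_1 = x_1^{q}x_2^{-1}$ carries an $x_1$-run of length $q$, $r_2 = x_1^{q-1}x_2^{-k}$ carries an $x_1$-run of length $q-1$ together with an $x_2$-run of length $k$, while $w$ carries a single $x_1$ and an $x_2$-run of length $k-1$ (so that $n_1 = 2q$ and $n_2 = 2k$). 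Since a maximal $x_i$-run of length $p$ is realized by a chain of $p-1$ color-$3$ edges between $G_i$ and $G_3$, and $m_{13}^{(3)}=2q-2$ while $G_1$ has the $2q$ vertices $x_1^{(1)},\dots,x_1^{(2q)}$, the two competing runs $x_1^{q}$ (of $r_1$) and $x_1^{q-1}$ (of $r_2$) must be carried by the two distinct bipartition classes of $G_1$ — exactly the white/black dichotomy of Lemma \ref{lemma:unique-polyhedra} — with the leftover vertex of the second class supplying the single $x_1$ of $w$.

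Next I would normalize by an automorphism. As $m_{12}^{(2)}=2$, the only color-$2$ edges between $G_1$ and $G_2$ are $x_1^{(2q-1)}x_2^{(2k-1)}$ and $x_1^{(2q)}x_2^{(2k)}$, and these mark where an $x_1$-run abuts the $x_2^{-1}$ of $r_1$; so, as in Lemma \ref{lemma:unique-lens1}, I may assume $x_2^{(2k)}$ is the starting vertex of $r_1$. Bipartiteness then forces one color class of $G_1$ to realize $x_1^{q}$. There are two orientations for the resulting $G_1$–$G_3$ chain; I would rule out the reversed one by showing, via the requirement that every component of $\Gamma_{\{0,3\}}$ be a $4$-cycle, that it propagates to an $x_1$-power of the wrong length, producing a spurious relation of the form $x_1^{q-2}(\cdots)x_2^{-1}$ or an inadmissible $x_1 x_2 (\cdots) x_2^{-1}$ not present in $\mathcal{R}$. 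Discarding that case determines all color-$3$ edges between $G_1$ and $G_3$ uniquely and identifies which class carries $x_1^{q-1}$ of $r_2$ and which vertex carries the single $x_1$ of $w$.

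With the $G_1$–$G_3$ edges fixed, it remains to place the color-$3$ edges between $G_2$ and $G_3$, which are pinned by the runs $x_2^{-k}$ of $r_2$ and $x_2^{k-1}$ of $w$. Repeating the chain-plus-$4$-cycle argument on the $2k$ vertices of $G_2$ with $m_{23}^{(3)}=2k-2$ forces these edges as well. Tracing through the forced starting vertices then yields the triplet $(q_1,q_2,q_3)=(q-1,k,1)$ and the path $P_5(x_1^{(2q-3)},x_1^{(2q-2)},x_3^{(1)},x_3^{(2)},x_2^{(2k)},x_2^{(2k-1)})$ as the unique choice, establishing the lemma.

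The step I expect to be the main obstacle is disentangling the two competing $x_1$-runs $x_1^{q}$ and $x_1^{q-1}$. Unlike Example \ref{eg:lens1}, where only $r_1$ supplied a long $x_1$-run and the $G_1$–$G_3$ chain was essentially immediate, here both $r_1$ and $r_2$ contend for the color-$3$ edges incident to $G_1$, so the bipartite-class bookkeeping and the elimination of the wrong chain orientation must be carried out simultaneously for both runs while keeping all indices consistent modulo $2q$. Once this is settled, the $G_2$–$G_3$ analysis is routine, being formally identical to the $x_1$-side argument.
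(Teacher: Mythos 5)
Your proposal follows essentially the same route as the paper's proof: normalize the starting vertex of $x_1^{q}x_2^{-1}$ up to an automorphism, use bipartiteness to split the runs $x_1^{q}$ and $x_1^{q-1}$ (plus the lone $x_1$ of $w$) between the two color classes of $G_1$, reduce to the two possible orientations of the $G_1$--$G_3$ color-$3$ chain, discard one by exhibiting a relation not in $\mathcal{R}$, and then force the remaining color-$3$ edges exactly as in Lemma \ref{lemma:unique-lens1}. The only discrepancy is in a detail you flagged as uncertain: in the rejected orientation the paper identifies the spurious relation as $x_2^{k}x_1^{-1}$ (the rejected case essentially reproduces the configuration of Example \ref{eg:lens1}) rather than a wrong-length $x_1$-power, but the contradiction is obtained by the same mechanism you describe.
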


\begin{proof}
Without loss of generality, we choose $x_2^{(2k-1)}$ as the starting vertex for the relation $x_1^{q}x_2^{-1}$.
Thus, either $\{x_1^{(2q-1)}x_3^{(2k)}$, $x_1^{(2q-3)}x_3^{(2k+2)}$, $\dots$, $x_1^{(3)}x_3^{(2q+2k-4)}\}$ or $\{x_1^{(2q-1)}x_3^{(2q+2k-4)}$, $x_1^{(1)}x_3^{(2q+2k-6)}$, 
$\dots$, $x_1^{(2q-5)}x_3^{(2k)}\}$ is the set of edges of color $3$. By similar arguments as in the proof of Lemma \ref{lemma:unique-lens1}, we get
a relation $x_2^{k}x_1^{-1}$ in the first case. Therefore, we have to choose the second case. Again, by similar arguments as in the proof of Lemma \ref{lemma:unique-lens1}, the lemma follows.
\end{proof}

\begin{eg}[\textbf{\boldmath{Crystallization of a hyperbolic $3$-manifold}}]\label{eg:hyperbolic}
{\rm Let $\langle S \mid R\rangle$ be a presentation of a group, where $S=\{x_1,x_2\}$ and $R=$
$\{x_1^4x_2^{-1}x_1x_2^{-1}x_1^4x_2^{-1}x_1x_2^{-3}x_1x_2^{-1}, x_1^3x_2^2x_1^3x_2^{-1}x_1$ $x_2^{-1}x_1^4x_2^{-1}x_1x_2^{-1}\}$.
It is not difficult to prove that, $x_2^2x_1^3x_2^2x_1^{-1}x_2x_1^{-1} \in \overline{R}$ is the only independent element of minimum weight.

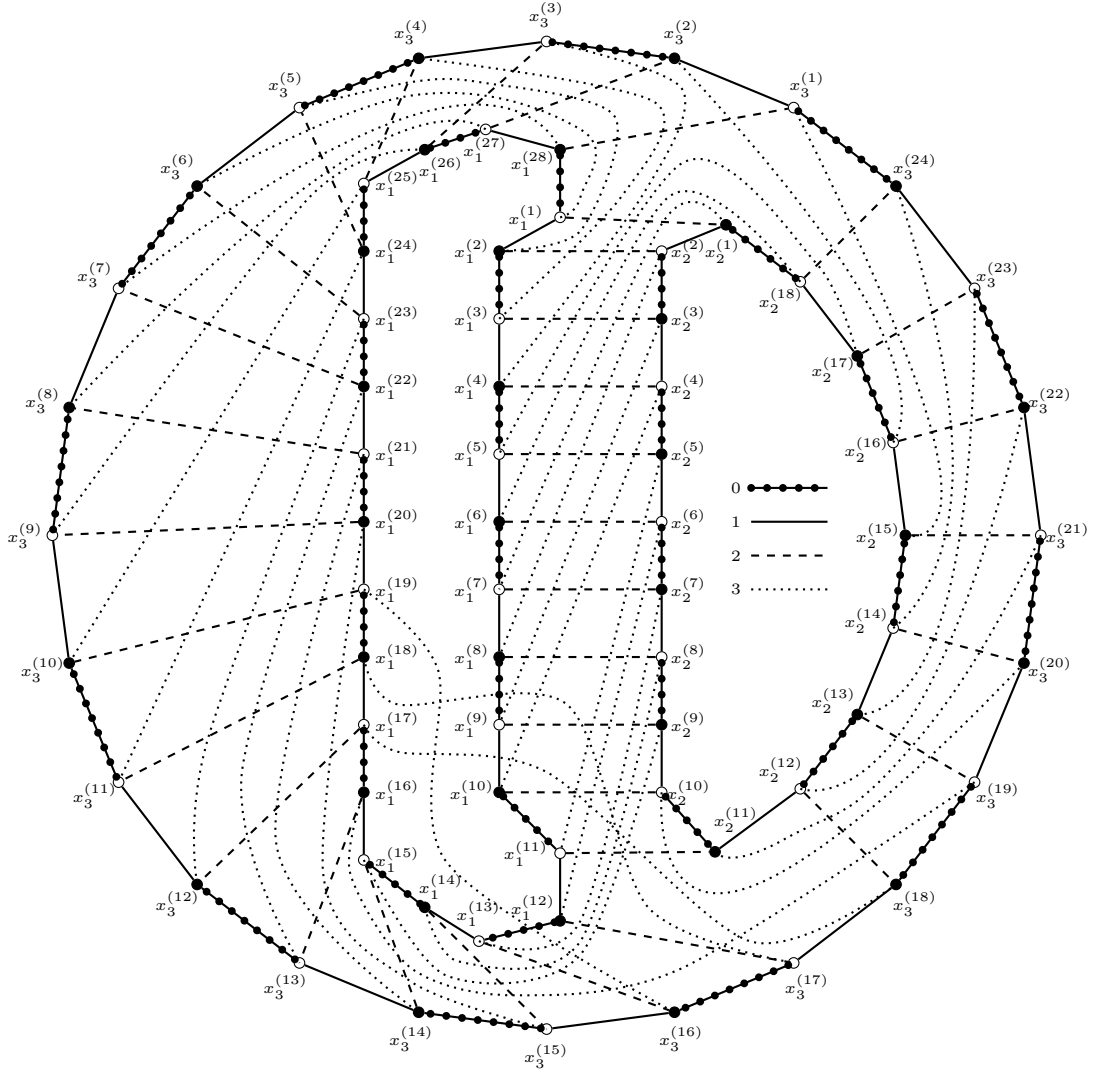
\begin{figure}[ht]
\tikzstyle{vert}=[circle, draw, fill=black!100, inner sep=0pt, minimum width=4pt]
\tikzstyle{vertex}=[circle, draw, fill=black!00, inner sep=0pt, minimum width=4pt]
\tikzstyle{ver}=[]
\tikzstyle{extra}=[circle, draw, fill=black!50, inner sep=0pt, minimum width=2pt]
\tikzstyle{edge} = [draw,thick,-]

\centering
\begin{tikzpicture}[scale=0.9]

\begin{scope}[]
\foreach \x/\y in {0/x_{3}^{(21)},30/x_{3}^{(23)},60/x_{3}^{(1)},90/x_{3}^{(3)},120/x_{3}^{(5)},150/x_{3}^{(7)},180/x_{3}^{(9)},210/x_{3}^{(11)},240/x_{3}^{(13)},270/x_{3}^{(15)},300/x_{3}^{(17)},330/x_{3}^{(19)}}{
\node[ver] (\y) at (\x:7.7){\tiny{$\y$}};
\node[vertex] (\y) at (\x:7.3){};
}
\foreach \x/\y in {15/x_{3}^{(22)},45/x_{3}^{(24)},75/x_{3}^{(2)},105/x_{3}^{(4)},135/x_{3}^{(6)},165/x_{3}^{(8)},195/x_{3}^{(10)},225/x_{3}^{(12)},255/x_{3}^{(14)},285/x_{3}^{(16)},315/x_{3}^{(18)},345/x_{3}^{(20)}}{
\node[ver] (\y) at (\x:7.7){\tiny{$\y$}};
\node[vert] (\y) at (\x:7.3){};
}
\foreach \x/\y in {15/x_2^{(16)},45/x_2^{(18)},315/x_2^{(12)},345/x_2^{(14)}}{
\node[ver] (\y) at (\x:4.9){\tiny{$\y$}};
\node[vertex] (\y) at (\x:5.3){};
}
\foreach \x/\y in {0/x_2^{(15)},30/x_2^{(17)},330/x_2^{(13)}}{
\node[ver] (\y) at (\x:4.9){\tiny{$\y$}};
\node[vert] (\y) at (\x:5.3){};
}
\foreach \x/\y/\z/\w in {1.7/3.2/2.1/x_2^{(3)},1.7/1.2/2.1/x_2^{(5)},1.7/-0.8/2.1/x_2^{(7)},1.7/-2.8/2.1/x_2^{(9)}}{
\node[ver] (\w) at (\z,\y){\tiny{$\w$}};
\node[vert] (\w) at (\x,\y){};
}
\foreach \x/\y/\z/\w in {1.7/4.2/2.1/x_2^{(2)},1.7/2.2/2.1/x_2^{(4)},1.7/0.2/2.1/x_2^{(6)},1.7/-1.8/2.1/x_2^{(8)},1.7/-3.8/2.1/x_2^{(10)}}{
\node[ver] (\w) at (\z,\y){\tiny{$\w$}};
\node[vertex] (\w) at (\x,\y){};
}
\node[ver] () at (2.6,4.2){\tiny{$x_2^{(1)}$}};
\node[ver] () at (2.8,-4.2){\tiny{$x_2^{(11)}$}};
\node[vert] (x_2^{(1)}) at (60:5.3){};
\node[vert] (x_2^{(11)}) at (298:5.3){};
\foreach \x/\y/\z/\w in {0.2/4.7/-0.3/x_1^{(1)},-0.7/3.2/-1.1/x_{1}^{(3)},-0.7/1.2/-1.1/x_{1}^{(5)},-0.7/-0.8/-1.1/x_1^{(7)},-0.7/-2.8/-1.1/x_1^{(9)},0.2/-4.7/-0.3/x_1^{(11)}}{
\node[ver] (\w) at (\z,\y){\tiny{$\w$}};
\node[vertex] (\w) at (\x,\y){};
}
\foreach \x/\y/\z/\w in {-0.7/4.2/-1.1/x_1^{(2)},-0.7/2.2/-1.1/x_1^{(4)},-0.7/0.2/-1.1/x_1^{(6)},-0.7/-1.8/-1.1/x_1^{(8)},-0.7/-3.8/-1.1/x_1^{(10)}}{
\node[ver] (\w) at (\z,\y){\tiny{$\w$}};
\node[vert] (\w) at (\x,\y){};
}
\foreach \x/\y/\z/\w/\q in {0.2/5.7/-0.2/x_1^{(28)}/5.5, 0.2/-5.7/-0.2/x_1^{(12)}/-5.5}{
\node[ver] (\w) at (\z,\q){\tiny{$\w$}};
\node[vert] (\w) at (\x,\y){};
}
\foreach \x/\y/\z/\w in {-2.7/3.2/-2.2/x_1^{(23)},-2.7/1.2/-2.2/x_1^{(21)},-2.7/-0.8/-2.2/x_1^{(19)},-2.7/-2.8/-2.2/x_1^{(17)},-2.7/-4.8/-2.2/x_1^{(15)},
-2.7/5.2/-2.2/x_1^{(25)}}{
\node[ver] (\w) at (\z,\y){\tiny{$\w$}};
\node[vertex] (\w) at (\x,\y){};
}
\foreach \x/\y/\z/\w in {-2.7/4.2/-2.2/x_1^{(24)},-2.7/2.2/-2.2/x_1^{(22)},-2.7/0.2/-2.2/x_1^{(20)},-2.7/-1.8/-2.2/x_1^{(18)},-2.7/-3.8/-2.2/x_1^{(16)}}{
\node[ver] (\w) at (\z,\y){\tiny{$\w$}};
\node[vert] (\w) at (\x,\y){};
}

\foreach \x/\y/\z/\w in {-0.9/6/5.7/x_1^{(27)},-1/-6/-5.6/x_{1}^{(13)}}{
\node[ver] (\w) at (\x,\z){\tiny{$\w$}};
\node[vertex] (\w) at (\x,\y){};
}
\foreach \x/\y/\z/\w in {-1.8/-5.5/-5.2/x_1^{(14)},-1.8/5.7/5.4/x_1^{(26)}}{
\node[ver] (\w) at (\x,\z){\tiny{$~~~~~\w$}};
\node[vert] (\w) at (\x,\y){};
}
\end{scope}

\begin{scope}[shift={(2.8,-0.8)}]
\node[ver] (3) at (0,0){\tiny{$3$}}; 
\node[ver] (2) at (0,0.5){\tiny{$2$}};
\node[ver](1) at (0,1){\tiny{$1$}}; 
\node[ver](0) at (0,1.5){\tiny{$0$}}; 
\node[ver] (7) at (1.5,0){}; 
\node[ver](6) at (1.5,0.5){}; 
\node[ver](5) at (1.5,1){}; 
\node[ver] (4) at (1.5,1.5){};
\end{scope}

\foreach \x/\y in {x_1^{(2)}/x_{1}^{(3)},x_{1}^{(3)}/x_1^{(4)},x_1^{(4)}/x_{1}^{(5)},x_{1}^{(5)}/x_1^{(6)},x_1^{(6)}/x_1^{(7)},x_1^{(7)}/x_1^{(8)},x_1^{(8)}/x_1^{(9)},x_1^{(9)}/x_1^{(10)},x_1^{(10)}/x_1^{(11)},x_1^{(11)}/x_1^{(12)},
x_1^{(12)}/x_{1}^{(13)},x_{1}^{(13)}/x_1^{(14)},x_1^{(14)}/x_1^{(15)},x_1^{(15)}/x_1^{(16)},x_1^{(16)}/x_1^{(17)},x_1^{(17)}/x_1^{(18)},x_1^{(18)}/x_1^{(19)},x_1^{(19)}/x_1^{(20)},
x_1^{(20)}/x_1^{(21)},x_1^{(21)}/x_1^{(22)},x_1^{(22)}/x_1^{(23)},x_1^{(23)}/x_1^{(24)},x_1^{(24)}/x_1^{(25)},x_1^{(25)}/x_1^{(26)},x_1^{(26)}/x_1^{(27)},x_1^{(27)}/x_1^{(28)},x_1^{(28)}/x_1^{(1)},x_1^{(1)}/x_1^{(2)},0/4,1/5}{
\path[edge] (\x) -- (\y);}

\foreach \x/\y in {x_1^{(2)}/x_{1}^{(3)},x_1^{(4)}/x_{1}^{(5)},x_1^{(6)}/x_1^{(7)},x_1^{(8)}/x_1^{(9)},x_1^{(10)}/x_1^{(11)},
x_1^{(12)}/x_{1}^{(13)},x_1^{(14)}/x_1^{(15)},x_1^{(17)}/x_1^{(16)},x_1^{(19)}/x_1^{(18)},x_1^{(21)}/x_1^{(20)},
x_1^{(23)}/x_1^{(22)},x_1^{(25)}/x_1^{(24)},x_1^{(26)}/x_1^{(27)},x_1^{(28)}/x_1^{(1)},0/4}{
\draw [line width=3pt, line cap=round, dash pattern=on 0pt off 2\pgflinewidth] 	(\x) -- (\y);}

\foreach \x/\y in {x_2^{(2)}/x_2^{(3)},x_2^{(3)}/x_2^{(4)},x_2^{(4)}/x_2^{(5)},x_2^{(5)}/x_2^{(6)},x_2^{(6)}/x_2^{(7)},x_2^{(7)}/x_2^{(8)},x_2^{(8)}/x_2^{(9)},x_2^{(9)}/x_2^{(10)},x_2^{(10)}/x_2^{(11)},x_2^{(11)}/x_2^{(12)},
x_2^{(12)}/x_2^{(13)},x_2^{(13)}/x_2^{(14)},x_2^{(14)}/x_2^{(15)},x_2^{(15)}/x_2^{(16)},x_2^{(16)}/x_2^{(17)},x_2^{(17)}/x_2^{(18)},x_2^{(18)}/x_2^{(1)},
x_2^{(1)}/x_2^{(2)}}{
\path[edge] (\x) -- (\y);}

\foreach \x/\y in {x_2^{(2)}/x_2^{(3)},x_2^{(4)}/x_2^{(5)},x_2^{(6)}/x_2^{(7)},x_2^{(8)}/x_2^{(9)},x_2^{(10)}/x_2^{(11)},
x_2^{(12)}/x_2^{(13)},x_2^{(14)}/x_2^{(15)},x_2^{(16)}/x_2^{(17)},x_2^{(18)}/x_2^{(1)}}{
\draw [line width=3pt, line cap=round, dash pattern=on 0pt off 2\pgflinewidth] 	(\x) -- (\y);}

\foreach \x/\y in {x_{3}^{(21)}/x_{3}^{(22)},x_{3}^{(22)}/x_{3}^{(23)},x_{3}^{(23)}/x_{3}^{(24)},x_{3}^{(24)}/x_{3}^{(1)},x_{3}^{(1)}/x_{3}^{(2)},x_{3}^{(2)}/x_{3}^{(3)},x_{3}^{(3)}/x_{3}^{(4)},x_{3}^{(4)}/x_{3}^{(5)},x_{3}^{(5)}/x_{3}^{(6)},x_{3}^{(6)}/x_{3}^{(7)},
x_{3}^{(7)}/x_{3}^{(8)},x_{3}^{(8)}/x_{3}^{(9)},x_{3}^{(9)}/x_{3}^{(10)},x_{3}^{(10)}/x_{3}^{(11)},x_{3}^{(11)}/x_{3}^{(12)},x_{3}^{(12)}/x_{3}^{(13)},x_{3}^{(13)}/x_{3}^{(14)},
x_{3}^{(14)}/x_{3}^{(15)},x_{3}^{(15)}/x_{3}^{(16)},x_{3}^{(16)}/x_{3}^{(17)},x_{3}^{(17)}/x_{3}^{(18)},x_{3}^{(18)}/x_{3}^{(19)},x_{3}^{(19)}/x_{3}^{(20)},x_{3}^{(20)}/x_{3}^{(21)}}{
\path[edge] (\x) -- (\y);}

\foreach \x/\y in {x_{3}^{(23)}/x_{3}^{(22)},x_{3}^{(1)}/x_{3}^{(24)},x_{3}^{(3)}/x_{3}^{(2)},x_{3}^{(5)}/x_{3}^{(4)},x_{3}^{(7)}/x_{3}^{(6)},
x_{3}^{(9)}/x_{3}^{(8)},x_{3}^{(11)}/x_{3}^{(10)},x_{3}^{(13)}/x_{3}^{(12)},x_{3}^{(15)}/x_{3}^{(14)},
x_{3}^{(17)}/x_{3}^{(16)},x_{3}^{(19)}/x_{3}^{(18)},x_{3}^{(21)}/x_{3}^{(20)}}{
\draw [line width=3pt, line cap=round, dash pattern=on 0pt off 2\pgflinewidth] 	(\x) -- (\y);}

\foreach \x/\y in {x_1^{(2)}/x_2^{(2)},x_{1}^{(3)}/x_2^{(3)},x_1^{(4)}/x_2^{(4)},x_{1}^{(5)}/x_2^{(5)},x_1^{(6)}/x_2^{(6)},x_1^{(7)}/x_2^{(7)},x_1^{(8)}/x_2^{(8)},x_1^{(9)}/x_2^{(9)},x_1^{(10)}/x_2^{(10)},x_2^{(18)}/x_{3}^{(24)},x_2^{(17)}/x_{3}^{(23)},
x_2^{(16)}/x_{3}^{(22)},x_2^{(15)}/x_{3}^{(21)},x_2^{(12)}/x_{3}^{(18)},x_2^{(13)}/x_{3}^{(19)},x_2^{(14)}/x_{3}^{(20)},x_1^{(11)}/x_2^{(11)},x_1^{(1)}/x_2^{(1)},x_1^{(28)}/x_{3}^{(1)},x_1^{(12)}/x_{3}^{(17)},
x_{1}^{(13)}/x_{3}^{(16)},x_1^{(14)}/x_{3}^{(15)},x_1^{(15)}/x_{3}^{(14)},x_1^{(16)}/x_{3}^{(13)},x_1^{(17)}/x_{3}^{(12)},x_1^{(18)}/x_{3}^{(11)},x_1^{(19)}/x_{3}^{(10)},x_1^{(20)}/x_{3}^{(9)},
x_1^{(21)}/x_{3}^{(8)},x_1^{(22)}/x_{3}^{(7)},x_1^{(23)}/x_{3}^{(6)},x_1^{(24)}/x_{3}^{(5)},x_1^{(25)}/x_{3}^{(4)},x_1^{(26)}/x_{3}^{(3)},x_1^{(27)}/x_{3}^{(2)},2/6}{
\path[edge, dashed] (\x) -- (\y);}

\foreach \x/\y in {x_1^{(25)}/x_{3}^{(10)},x_1^{(24)}/x_{3}^{(11)},x_1^{(11)}/x_2^{(5)},x_1^{(10)}/x_2^{(4)},x_1^{(9)}/x_2^{(3)},x_1^{(8)}/x_2^{(2)},x_1^{(12)}/x_2^{(6)},3/7}{
\path[edge, dotted] (\x) -- (\y);}
\draw[edge, dotted] plot [smooth,tension=1] coordinates{(x_1^{(28)})(-2,6.2)(x_{3}^{(7)})};
\draw[edge, dotted] plot [smooth,tension=1] coordinates{(x_1^{(27)})(-3.2,5.4)(x_{3}^{(8)})};
\draw[edge, dotted] plot [smooth,tension=0.5] coordinates{(x_1^{(26)})(-3.5,5)(x_{3}^{(9)})};
\draw[edge, dotted] plot [smooth,tension=0.5] coordinates{(x_{1}^{(13)})(0.5,-5.7)(x_2^{(7)})};
\draw[edge, dotted] plot [smooth,tension=0.5] coordinates{(x_1^{(14)})(-1.3,-6.3)(0.5,-6)(x_2^{(8)})};
\draw[edge, dotted] plot [smooth,tension=0.5] coordinates{(x_1^{(15)})(-1.5,-6.5)(0.5,-6.2)(x_2^{(9)})};
\draw[edge, dotted] plot [smooth,tension=1] coordinates{(x_2^{(10)})(3,-5.2)(x_{3}^{(20)})};
\draw[edge, dotted] plot [smooth,tension=1] coordinates{(x_2^{(11)})(3.5,-4.5)(5.5,-3)(x_{3}^{(21)})};
\draw[edge, dotted] plot [smooth,tension=1] coordinates{(x_2^{(12)})(5.5,-2.5)(x_{3}^{(22)})};
\draw[edge, dotted] plot [smooth,tension=1] coordinates{(x_2^{(13)})(5.9,-1)(x_{3}^{(23)})};
\draw[edge, dotted] plot [smooth,tension=1] coordinates{(x_2^{(14)})(6,1)(x_{3}^{(24)})};
\draw[edge, dotted] plot [smooth,tension=1] coordinates{(x_2^{(15)})(5.7,1.5)(x_{3}^{(1)})};
\draw[edge, dotted] plot [smooth,tension=1] coordinates{(x_2^{(16)})(4.7,3.5)(x_{3}^{(2)})};
\draw[edge, dotted] plot [smooth,tension=0.5] coordinates{(x_2^{(17)})(4,4)(2,5.5)(x_{1}^{(5)})};
\draw[edge, dotted] plot [smooth,tension=0.5] coordinates{(x_2^{(18)})(1.8,5)(x_1^{(6)})};
\draw[edge, dotted] plot [smooth,tension=0.5] coordinates{(x_2^{(1)})(1.7,4.6)(x_1^{(7)})};
\draw[edge, dotted] plot [smooth,tension=0.5] coordinates{(x_1^{(4)})(1.4,5)(2,6.6)(x_{3}^{(3)})};
\draw[edge, dotted] plot [smooth,tension=0.5] coordinates{(x_{1}^{(3)})(1.1,5)(1.6,6.4)(x_{3}^{(4)})};
\draw[edge, dotted] plot [smooth,tension=0.7] coordinates{(x_1^{(1)})(0.5,5)(0.5,6)(-2,6.5)(x_{3}^{(6)})};
\draw[edge, dotted] plot [smooth,tension=0.5] coordinates{(x_1^{(2)})(0.5,4.5)(1,6.2)(-1,6.7)(-2,6.7)(x_{3}^{(5)})};
\draw[edge, dotted] plot [smooth,tension=0.5] coordinates{(x_1^{(16)})(-2.9,-5)(-2,-6.4)(-0.5,-6.8)(2,-6.4)(x_{3}^{(19)})};
\draw[edge, dotted] plot [smooth,tension=0.5] coordinates{(x_1^{(17)})(-2.2,-3.2)(0,-3.5)(3,-6)(x_{3}^{(18)})};
\draw[edge, dotted] plot [smooth,tension=0.5] coordinates{(x_1^{(18)})(-2.2,-2.5)(0.3,-2.5)(1.5,-5.5)(x_{3}^{(17)})};
\draw[edge, dotted] plot [smooth,tension=0.5] coordinates{(x_1^{(19)})(-1.6,-2)(-1.5,-4.8)(x_{3}^{(16)})};
\draw[edge, dotted] plot [smooth,tension=0.5] coordinates{(x_1^{(20)})(-3.3,-5)(-2,-6.6)(x_{3}^{(15)})};
\draw[edge, dotted] plot [smooth,tension=0.5] coordinates{(x_1^{(21)})(-4,-4.5)(x_{3}^{(14)})};
\draw[edge, dotted] plot [smooth,tension=0.5] coordinates{(x_1^{(22)})(-4.5,-3.5)(x_{3}^{(13)})};
\draw[edge, dotted] plot [smooth,tension=0.5] coordinates{(x_1^{(23)})(-5,-3)(x_{3}^{(12)})};
\end{tikzpicture}
\caption{Crystallization of a hyperbolic $3$-manifold}\label{fig:hyperbolic}
\end{figure}

Therefore, $\mathcal{R}=R\cup \{x_2^2x_1^3x_2^2x_1^{-1}x_2x_1^{-1}\}$. Thus, $m^{(c)}_{12}=11$, $m^{(c)}_{13}=17$, $m^{(c)}_{23}=7$ for 
$2 \leq c \leq 3$, $(n_1,n_2,n_3)=(28,18,24)$ and $G_i=C_{n_{i}}(x^{(1)}_i, \dots, x^{(n_i)}_i)$ for $1\leq i \leq 3$ as in  Figure \ref{fig:hyperbolic}. 
Choose $(n_i,n_j,n_l)=(n_1,n_3,n_2)$. Thus, $\{x^{(1)}_1x^{(1)}_2,\dots, x_1^{(11)}x^{(11)}_2\}$, $\{x^{(28)}_1x^{(1)}_3,\dots, x^{(12)}_1x^{(17)}_3\}$ 
and $\{x^{(24)}_3x^{(18)}_2,\dots, x^{(18)}_3x^{(12)}_2\}$ are the sets of edges of color $2$. Here the only choice for the triplet $(q_1,q_2,q_3)$ is $(8,5,10)$ and for the path $P_5$ is  
$P_5(x_3^{(20)}, x_3^{(19)}, x_1^{(16)}, x_1^{(15)},x_2^{(9)}, x_2^{(10)})$, they give a  bipartite $4$-colored graph $(\Gamma, \gamma)$ (cf. Figure \ref{fig:hyperbolic}) which yields  $(\langle S \mid R\rangle, R)$ (cf. Lemma \ref{lemma:unique-hyperbolic}). Since $\langle S \mid R\rangle$ is not a free product of two non trivial group, $|\mathcal{K}(\Gamma)|$ is prime.
Now, $\langle S \mid R\rangle\cong \langle x_1, x_2 \mid x_1^4x_2^{-1}x_1x_2^{-1}x_1^4x_2^{-1}x_1x_2^{-3}x_1x_2^{-1}, x_2^2x_1^3x_2^2x_1^{-1}x_2x_1^{-1} \rangle$
$\cong \langle x_1, x_2 \mid  x_1x_2^{3}x_1x_2x_1^4x_2x_1x_2x_1^4x_2, x_1x_2x_1x_2^2x_1^{-3}x_2^2 \rangle$ which is the presentation of the 
fundamental group of a closed, connected orientable prime hyperbolic $3$-manifold (see the presentation in \texttt{http://www.dms.umontreal.ca/$\sim$math/Logiciels/Magma/text414.htm}). 
Therefore,  by Proposition \ref{prop:hyperbolic}, $|\mathcal{K}(\Gamma)|$ is homeomorphic to the hyperbolic $3$-manifold as in the list.
Thus, using the algorithm we get a crystallization of the hyperbolic $3$-manifold from a given presentation.

Observe that (i) $\#V(\Gamma)= 70=28+26+16 = \lambda(\langle S \mid R\rangle, R)$, and (ii)
$\Gamma_{\{i,j\}}=16C_4 \sqcup C_6$ for $0\leq i \leq 1, 2\leq j \leq3$ as $\#V(\Gamma)=70=4 \times 17+2$.
}
\end{eg}

\begin{lemma}\label{lemma:unique-hyperbolic}
Let the presentation $(\langle S \mid R\rangle, R)$  and $q_1, q_2, q_3, P_5$ be as in Example $\ref{eg:hyperbolic}$. 
Then, the choice of the triplet 
$(q_1,q_2,q_3)$ and the path $P_5$  for which the $4$-colored graph $(\Gamma,\gamma)$ yields $(\langle S \mid R\rangle, R)$, is unique.
\end{lemma}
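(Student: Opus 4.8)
The plan is to follow the same edge-tracing strategy that proves Lemmas \ref{lemma:unique-polyhedra}, \ref{lemma:unique-lens1} and \ref{lemma:unique-lens2}. By Steps 6 and 7 of the construction the subgraph $\Gamma_{\{0,1,2\}}$ is already pinned down by the data $m^{(2)}_{ij}$, $(n_1,n_2,n_3)=(28,18,24)$ and the choice $(n_i,n_j,n_l)=(n_1,n_3,n_2)$; all that remains free is the placement of the $11+17+7$ edges of color $3$, equivalently the triplet $(q_1,q_2,q_3)$ together with the orientation of the path $P_5$. I would recover these edges by reading the three relations of $\mathcal{R}$ off the bi-colored cycles of $\Gamma_{\{2,3\}}$: each relation, as a cyclic word in $x_1,x_2,x_3$, records the sequence of components $G_1,G_2,G_3$ that a $\{2,3\}$-cycle passes through, and since the color-$2$ edges are fixed, the color-$3$ edges are forced as soon as the base vertex and orientation of each cycle are chosen.

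First I would invoke an automorphism of the already-constructed $\Gamma_{\{0,1,2\}}$ to normalize the starting vertex and orientation of one relation, exactly the reduction that opens each earlier proof. I would then propagate: every maximal run $x_1^{a}$ (respectively $x_2^{b}$) appearing in a relation corresponds to a block of consecutive color-$3$ edges between $G_1$ and $G_3$ (respectively between $G_2$ and $G_3$), and because each family of $m^{(3)}_{ij}$ color-$3$ edges must close into $m^{(3)}_{ij}-1$ bi-colored $4$-cycles (cf. Step 5 and the observations in Example \ref{eg:hyperbolic}), fixing one edge of a block determines the whole block. Reading the long relators $x_1^4x_2^{-1}x_1x_2^{-1}x_1^4x_2^{-1}x_1x_2^{-3}x_1x_2^{-1}$ and $x_1^3x_2^2x_1^3x_2^{-1}x_1x_2^{-1}x_1^4x_2^{-1}x_1x_2^{-1}$ together with the added relator $x_2^2x_1^3x_2^2x_1^{-1}x_2x_1^{-1}$ then lays down the dotted color-$3$ edges shown in Figure \ref{fig:hyperbolic}. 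To obtain uniqueness I would show that any alternative branch of the $4$-cycle closure produces either a relator absent from $\mathcal{R}$ (the spurious-relation contradiction used in Lemma \ref{lemma:unique-lens1}) or a conjugation pattern $x_1x_2wx_2^{-1}$ incompatible with the reduced relators (the contradiction used in Lemma \ref{lemma:unique-polyhedra}); each such contradiction kills one alternative, leaving only $(q_1,q_2,q_3)=(8,5,10)$ and the stated $P_5$.

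The hard part will be bookkeeping rather than a new idea. In contrast to the lens-space and polyhedral examples, where the relators are essentially single powers and hence give long, easily matched blocks of color-$3$ edges, the two relators here interleave short $x_1$- and $x_2$-syllables of several different exponents, so the color-$3$ blocks are short and numerous and must be matched across all three cycles $G_1,G_2,G_3$ with the correct black/white parities dictated by bipartiteness. I would control this by tabulating, for each relator, the cyclic list of syllable transitions $x_i^{\pm1}x_j^{\mp1}$ and cross-checking it against the indexed vertices of Figure \ref{fig:hyperbolic}; verifying that the displayed color-$3$ edges spell out precisely the three words of $\mathcal{R}$ establishes existence, and the branch-elimination argument above upgrades this to uniqueness.
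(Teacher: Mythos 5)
Your plan correctly identifies what is free (the three consecutive blocks of color-$3$ edges, i.e.\ the triplet $(q_1,q_2,q_3)$ and the orientation of $P_5$) and what is fixed ($\Gamma_{\{0,1,2\}}$), and your intended contradictions (a spurious relator, or an impossible conjugation pattern) are the right kind. But there is a genuine gap: you never supply the mechanism that reduces the a priori $n_1n_2n_3/4=3024$ placements to a tractable handful of cases. In Lemmas \ref{lemma:unique-polyhedra}--\ref{lemma:unique-lens2} the relators are essentially single powers, so normalizing one starting vertex by an automorphism immediately forces one long block and propagation does the rest; here the relators interleave many short syllables, and after the automorphism normalization there remain two independent integer offsets (where the $G_1$--$G_3$ color-$3$ block sits relative to the color-$2$ block, and likewise for $G_2$--$G_3$). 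Your proposal to ``try alternative branches and derive contradictions'' has no finite list of branches to run through.

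The paper closes exactly this gap with a counting device your write-up does not contain: for each occurrence of $x_j^{\varepsilon_1}x_i^{\pm m}x_j^{\varepsilon_2}$ in a relator, exactly $m-1$ vertices of $G_3$ must have \emph{both} their $2$-adjacent and $3$-adjacent neighbours in $G_i$. Summing over the maximal syllables of the three relators gives $15$ such vertices of $G_3$ for $G_1$, $5$ for $G_2$, and $9$ vertices of $G_2$ for $G_1$. Since $m_{13}^{(2)}=m_{13}^{(3)}=17$ and the overlap must be $15$, the offset is $m\in\{0,1,2\}$; the case $m=1$ dies on the overlap count, $m=0$ is removed by an automorphism, and the second offset $n$ is forced to $\pm 6$ because $-4\le n\le 4$ would create a relator containing $x_1^5$ and $|n|\ge 8$ would destroy every $x_1^4$, with $n=-6$ then killed by the count $9$. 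Note these counts come from the \emph{lengths} of maximal syllables, not from the syllable transitions $x_i^{\pm1}x_j^{\mp1}$ you propose to tabulate (those only recover the already-known $m_{ij}^{(c)}$). Without this overlap bookkeeping your branch elimination does not get off the ground, so the proposal as written is an outline rather than a proof.
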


\begin{proof}
From the discussions in the proofs of previous lemmas it is clear that, if $x_j^{\varepsilon_1} x_i^{\pm m} x_j^{\varepsilon_2}$ is a part of a relation for some $\varepsilon_1, \varepsilon_2 \in \{1,-1\}$, $m \geq 2$ and $1\leq i \neq j \leq 2$ then to yield $x_j^{\varepsilon_1} x_i^{\pm m} x_j^{\varepsilon_2}$, there are exactly $m-1$ vertices of $G_3$ have both the 2-adjacent vertices and the 3-adjacent vertices in $G_i$. There are three different words of type $x_2^{\varepsilon_1} x_1^4 x_2^{\varepsilon_2}$
and three different words of type $x_2^{\varepsilon_1} x_1^3 x_2^{\varepsilon_2}$ in the relations in $\mathcal{R}$. Therefore,
exactly $(3(4-1)+3(3-1)=)$ 15 vertices of $G_3$ have both the 2-adjacent vertices and the 3-adjacent vertices in $G_1$. By similar arguments as above,
$((3-1)+3(2-1)=)$ 5 vertices of $G_3$ have both the 2-adjacent vertices and the 3-adjacent vertices in $G_2$. Since there are
eight different words of type $x_1  x_2^{-1} x_1$ and one word of type $x_1^{-1}  x_2 x_1^{-1}$ in the relations in $\mathcal{R}$, exactly $(8+1=)$ 9 vertices of $G_2$ have both the 2-adjacent vertices and the 3-adjacent vertices in  $G_1$.
Since $m_{13}^{(2)}=m_{13}^{(3)}=17$ and $15$ vertices of $G_3$ have both the 2-adjacent vertices and the 3-adjacent vertices in $G_1$,
$x_3^{(1+m)}, \dots,x_3^{(15+m)}$, $0\leq m \leq 2$ are the choices of these vertices. If $m=1$ then, either $x_3^{(1)}$
or $x_3^{(17)}$ is joined to $G_1$ with an edge of color $3$ as $m^{(3)}_{13}=17$. Since exactly $15$ vertices of $G_3$ are joined to $G_1$ with both edges of colors $2$ and $3$, $m \neq 1$.
Up to an automorphism, we can assume $x_3^{(1)}$ is not such a vertex, i.e., $m = 2$. 
Therefore,  $x_3^{(1)}$ and $x_3^{(2)}$ are joined to $G_2$ with edges of color $3$. Let $x^{(3)}_3x^{(26+n)}_1,\dots, x^{(19)}_3x^{(10+n)}_1 \in \gamma^{-1}(3)$ for some integer $n$. If $-4\leq n \leq 4$ then there is a relation containing $x_1^5$ and if $n \geq 8$ or
$n \leq -8$ then there is no relation containing $x_1^4$ and therefore both cases are not possible. Therefore, $n = \pm 6$.
If $m=2$ and $n=-6$ then $x^{(3)}_3x^{(20)}_1,\dots, x^{(19)}_3x^{(4)}_1 \in \gamma^{-1}(3)$. This contradicts that $9$ vertices of $G_2$ have both the 2-adjacent vertices and the 3-adjacent vertices in  $G_1$. Therefore, $m=2$ and $n=6$ and hence
$x^{(3)}_3x^{(4)}_1,x^{(4)}_3x^{(3)}_1,\dots, x^{(19)}_3x^{(16)}_1 \in \gamma^{-1}(3)$.
 Since $x^{(18)}_3$ and $x^{(19)}_3$ are already joined to $G_1$ with edges of color $3$, the remaining
vertices $x^{(20)}_3,\dots, x^{(24)}_3$ are joined to $G_2$ with both edges of colors $2$ and $3$. Since
the relations with starting vertices $x^{(18)}_3$ and $x^{(19)}_3$  yield $x_2^2w$ for some $w \in F(S)$,  
$x^{(20)}_3x^{(14+k)}_2,\dots, x^{(24)}_3x^{(18+k)}_2 \in \gamma^{-1}(3)$ for $k=-4$. So, $x^{(15)}_1x^{(9)}_2 \in \gamma^{-1}(3)$. Thus, the lemma follows.
\end{proof}

\begin{remark} 
{\rm If $(\Gamma, \gamma)$ is a crystallization of a $3$-manifold then  the regular genus of $\Gamma$ is the integer
$\rho(\Gamma) = \min \{g_{01}, g_{02}, g_{03}\} -1$ (cf. \cite[Section 4]{bcg13}). The crystallizations constructed in Example \ref{eg:polyhedral} for $n=k=2$ and in Examples
\ref{eg:lens1} and \ref{eg:lens2} are crystallizations of handle-free manifolds. Thus, by \cite[Proposition 4]{ca99} and from the catalogue in \cite{bcg13}, 
these crystallizations are vertex-minimal regular genus two crystallizations
when the number of vertices of the crystallizations are at most $42$. The crystallizations constructed in Examples \ref{eg:lens1} and \ref{eg:lens2} are
vertex-minimal for all known cases. In fact, the crystallizations of $L((k-1)q+1, q)$ are vertex-minimal when $(k-1)q+1$ are even
(cf. \cite{bd14, cc14, sw13}).
}
\end{remark}

\subsection{Non existence of some crystallizations}

Here we consider the cases where Algorithm $1$ determines the non existence of any crystallization for a pair $(\langle S \mid R\rangle, R)$.

\begin{eg}[\textbf{\boldmath{For a presentation of $\mathbb{Z}_6$}}]\label{eg:no}
{\rm Let $(\langle S \mid R\rangle, R)$ be a presentation of the cyclic group $\mathbb{Z}_6$, where $S=\{x_1,x_2\}$ and $R= \{x_1^3x_2^{-1}, x_1^3x_2\}$.
Clearly, $x_2^2$ is the only independent element in $\overline{R}$ of minimum weight. Therefore,  $\mathcal{R}=R\cup \{x_2^2\}$ and
let $(\Gamma,\gamma)$ be  a crystallization realizing the above presentation. Thus, $m^{(c)}_{12}=1$, $m^{(c)}_{13}=5$, $m^{(c)}_{23}=3$ for $2\leq c \leq3$ and $(n_1,n_2,n_3)=(6,4,8)$. Therefore,
by choosing $(n_i,n_j,n_l)=(n_3,n_1,n_2)$, we have $\Gamma_{\{0,1,2\}}$ as in Figure \ref{fig:Z6}.

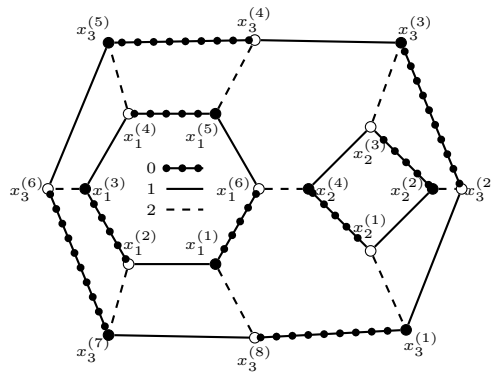
\begin{figure}[ht]
\tikzstyle{vert}=[circle, draw, fill=black!100, inner sep=0pt, minimum width=4pt]
\tikzstyle{vertex}=[circle, draw, fill=black!00, inner sep=0pt, minimum width=4pt]
\tikzstyle{ver}=[]
\tikzstyle{extra}=[circle, draw, fill=black!50, inner sep=0pt, minimum width=2pt]
\tikzstyle{edge} = [draw,thick,-]

\centering
\begin{tikzpicture}[scale=0.55]

\begin{scope}[]
\foreach \x/\y in {0/x_{3}^{(2)},180/x_{3}^{(6)}}{
\node[ver] (\y) at (\x:5.5){\tiny{$\y$}};
\node[vertex] (\y) at (\x:5){};
}

\foreach \x/\y in {90/x_{3}^{(4)},270/x_{3}^{(8)}}{
\node[ver] (\y) at (\x:4.1){\tiny{$\y$}};
\node[vertex] (\y) at (\x:3.6){};
}
\foreach \x/\y in {45/x_{3}^{(3)},135/x_{3}^{(5)},225/x_{3}^{(7)},317/x_{3}^{(1)}}{
\node[ver] (\y) at (\x:5.5){\tiny{$\y$}};
\node[vert] (\y) at (\x:5){};
}
\end{scope}

\begin{scope}[shift={(-2,0)}]
\foreach \x/\y in {0/x_{1}^{(6)},120/x_{1}^{(4)},240/x_{1}^{(2)}}{
\node[ver] (\y) at (\x:1.5){\tiny{$\y$}};
\node[vertex] (\y) at (\x:2.1){};
}
\foreach \x/\y in {60/x_{1}^{(5)},180/x_{1}^{(3)},300/x_{1}^{(1)}}{
\node[ver] () at (\x:1.5){\tiny{$\y$}};
\node[vert] (\y) at (\x:2.1){};
}
\end{scope}

\begin{scope}[shift={(2.8,0)}]
\foreach \x/\y in {90/x_{2}^{(3)},270/x_{2}^{(1)}}{
\node[ver] (\y) at (\x:0.9){\tiny{$\y$}};
\node[vertex] (\y) at (\x:1.5){};
}
\foreach \x/\y in {0/x_{2}^{(2)},180/x_{2}^{(4)}}{
\node[ver] () at (\x:0.9){\tiny{$\y$}};
\node[vert] (\y) at (\x:1.5){};
}
\end{scope}

\begin{scope}[shift={(-2.5,-1)}]
 
\node[ver] (2) at (0,0.5){\tiny{$2$}};
\node[ver](1) at (0,1){\tiny{$1$}}; 
\node[ver](0) at (0,1.5){\tiny{$0$}};  
\node[ver](6) at (1.5,0.5){}; 
\node[ver](5) at (1.5,1){}; 
\node[ver] (4) at (1.5,1.5){};
\end{scope}

\foreach \x/\y in {x_{1}^{(1)}/x_{1}^{(2)},x_{1}^{(2)}/x_{1}^{(3)},x_{1}^{(3)}/x_{1}^{(4)},x_{1}^{(4)}/x_{1}^{(5)},x_{1}^{(5)}/x_{1}^{(6)},x_{1}^{(6)}/x_{1}^{(1)},
x_{2}^{(1)}/x_{2}^{(2)},x_{2}^{(2)}/x_{2}^{(3)},x_{2}^{(3)}/x_{2}^{(4)},x_{2}^{(4)}/x_{2}^{(1)},
x_{3}^{(1)}/x_{3}^{(2)},x_{3}^{(2)}/x_{3}^{(3)},x_{3}^{(3)}/x_{3}^{(4)},x_{3}^{(4)}/x_{3}^{(5)},x_{3}^{(5)}/x_{3}^{(6)},x_{3}^{(6)}/x_{3}^{(7)},x_{3}^{(7)}/x_{3}^{(8)},x_{3}^{(8)}/x_{3}^{(1)},0/4,1/5}{
\path[edge] (\x) -- (\y);}

\foreach \x/\y in {x_{1}^{(2)}/x_{1}^{(3)},x_{1}^{(4)}/x_{1}^{(5)},x_{1}^{(6)}/x_{1}^{(1)},x_{2}^{(2)}/x_{2}^{(3)},x_{2}^{(4)}/x_{2}^{(1)},
x_{3}^{(2)}/x_{3}^{(3)},x_{3}^{(4)}/x_{3}^{(5)},x_{3}^{(6)}/x_{3}^{(7)},x_{3}^{(8)}/x_{3}^{(1)},0/4}{
\draw [line width=3pt, line cap=round, dash pattern=on 0pt off 2\pgflinewidth] 	(\x) -- (\y);}

\foreach \x/\y in {x_{1}^{(1)}/x_{3}^{(8)},x_{1}^{(2)}/x_{3}^{(7)},x_{1}^{(3)}/x_{3}^{(6)},x_{1}^{(4)}/x_{3}^{(5)},x_{1}^{(5)}/x_{3}^{(4)},x_{1}^{(6)}/x_{2}^{(4)},
x_{2}^{(1)}/x_{3}^{(1)},x_{2}^{(2)}/x_{3}^{(2)},x_{2}^{(3)}/x_{3}^{(3)},2/6}{
\path[edge, dashed] (\x) -- (\y);}
\end{tikzpicture}
\caption{The graph $\Gamma_{\{0,1,2\}}$}\label{fig:Z6}
\end{figure}

If $x_3^{(2q_3-1)}x_3^{(2q_3)}= x_3^{(7)}x_3^{(8)}$, $x_3^{(1)}x_3^{(2)}$ or $x_3^{(5)}x_3^{(6)}$ then, for each of the two choices of the path $P_5$,
either $x_3^{(1)}$ or $x_3^{(3)}$ is joined with $G_1$ with edges of color $3$. Since the graph is bipartite, neither $x_2^{(2)}x_3^{(1)}$ nor $x_2^{(2)}x_3^{(3)}$ can be edge
of color 3 in $\Gamma$.
Therefore, no components of $\Gamma_{\{2,3\}}$ yield the relation $x_2^2$. For the same reason, we can not choose $x_3^{(2q_3-1)}x_3^{(2q_3)}= x_3^{(3)}x_3^{(4)}$
and the path $P_5(x_1^{(2q_1-1)}, x_1^{(2q_1)}, x_3^{(3)}, x_3^{(4)}$, $x_2^{(2q_2)}, x_2^{(2q_2-1)})$. Therefore, the only remaining choice  is $x_3^{(2q_3-1)}x_3^{(2q_3)}= x_3^{(3)}x_3^{(4)}$
and the path $P_5(x_1^{(2q_1)}, x_1^{(2q_1-1)}, x_3^{(4)}, x_3^{(3)}, x_2^{(2q_2-1)}, x_2^{(2q_2)})$. Thus, $x_3^{(2)}$ is joined with $G_2$ and  hence $x_2^{(4)}x_3^{(2)}
\in \gamma^{-1}(3)$ (since $\Gamma$ is bipartite and has no double edge). Then, $P_3(x_1^{(6)}, x_2^{(4)},x_3^{(2)}, x_2^{(2)})$ is a part of a component of 
$\Gamma_{\{2,3\}}$, which yields the word $x_1^{-1}x_2^2$. Since $x_1^{-1}x_2^2$ is not a part of relations in $\mathcal{R}$, this choice also is not possible.
Thus, there is no crystallization of a closed connected $3$-manifold which yields $(\langle S \mid R\rangle, R)$ and is minimal with respect to $(\langle S \mid R\rangle, R)$.
 
}\end{eg}

\begin{eg}[\textbf{\boldmath{For a presentation of $\mathbb{Z}_{mn+n+1}$, $ m,n\geq 1$}}]\label{eg:no2}
{\rm Let $(\langle S \mid R\rangle, R)$ be the presentation of $\mathbb{Z}_{mn+n+1}$, where $S=\{x_1,x_2\}$ and $R= \{ x_1^{m+1}x_2^{-1}, x_1x_2^n\}$ and $m,n \geq 1$.
It is not difficult to prove that, $\{x_1^{m_1}x_2^{-1}x_1^{m_2}x_2^{-n} : m_1+m_2=m, m_1,m_2 \geq 1\}$ is the set of all independent elements in $\overline{R}$ of minimum weight. So, $\mathcal{R}=R\cup \{x_1^{m_1}x_2^{-1}x_1^{m_2}x_2^{-n} \}$ and let $(\Gamma,\gamma)$ be a
crystallization realizing the above presentation. Thus, $m^{(c)}_{12}=3$, $m^{(c)}_{13}=2m-1$, $m^{(c)}_{23}=2n-1$ for $2 \leq c \leq 3$ and $(n_1,n_2,n_3)=(2m+2,2n+2,2m+2n-2)$. By choosing $(n_i,n_j,n_l)=(n_3,n_1,n_2)$, we have $\Gamma_{\{0,1,2\}}$ as in Figure \ref{fig:polyhedral}.
From the discussions in the proof of Lemma \ref{lemma:unique-hyperbolic}, it is clear that, since there is a relation $x_1^{m+1} x_2^{-1}$
and there are two words $x_2^{-1} x_1^{m_1} x_2^{-1}$, $x_2^{-1} x_1^{m_2} x_2^{-1}$  in an other relation in $\mathcal{R}$,
exactly $m+m_1-1+m_2-1=2m-2$ vertices of $G_3$ have both the 2-adjacent vertices and the 3-adjacent vertices in  $G_1$. 
But, to yield the relation $x_1^{m+1}x_2^{-1}$, exactly $m$ vertices of $G_3$ have both the 2-adjacent vertices and the 3-adjacent vertices in  $G_1$.
Since $\Gamma$ is bipartite, $\{x_3^{(2n)},x_3^{(2n+2)},\dots, x_3^{(2m+2n-2)}\}$ is the only possible set of those $m$ vertices.
Therefore, $x_3^{(2n+1)},x_3^{(2n+3)},\dots, x_3^{(2m+2n-3)}$ are also joined to $G_1$ with edges of color $3$
as all the $m^{(3)}_{13}$ edges of color $3$ between $G_1$ and $G_3$ yield $m^{(3)}_{13}-1$  bi-colored $4$-cycles in $\Gamma_{\{0,1,3\}}$. Thus, we get $m+m-1=2m-1$ vertices of $G_3$ have both the 2-adjacent vertices and the 3-adjacent vertices in  $G_1$, which is a contradiction. Therefore, there is no choice for the triplet $(q_1,q_2,q_3)$ which yields the relations. Thus, there is no crystallization of a closed connected $3$-manifold which yields $(\langle S \mid R\rangle, R)$ and is minimal with respect to $(\langle S \mid R\rangle, R)$.
 
}\end{eg}

\section{Generalization of Algorithm 1}

In Section \ref{sec:algorithm}, we have computed crystallizations of $3$-manifolds from a given presentation $(\langle S \mid R\rangle, R)$ with two generators and two relations.
For such a presentation, $\Gamma_{\{0,1,2\}}$ and $\Gamma_{\{0,1,3\}}$ were unique up to an isomorphism. But, if the given presentation $(\langle S \mid R\rangle, R)$
has the number of generators and relations greater than two then  $\Gamma_{\{0,1,2\}}$ and $\Gamma_{\{0,1,3\}}$ may have many choices. But, there are some classes of presentations, for which  $\Gamma_{\{0,1,2\}}$ and 
$\Gamma_{\{0,1,3\}}$ are unique up to an isomorphism.  

In this section, we generalize  Algorithm $1$
for a presentation  $(\langle S \mid R\rangle, R)$ with three generators and a certain  class of relations. 
Let $r \in \overline{R}$ be an element of minimum weight and $ R' = R \cup \{r\}$.
Let $m^{(c)}_{ij}:= \sum_{w\in R'} w^{(c)}_{ij}$ for $1\leq i < j \leq 4$ and $2 \leq c \leq 3$, where  $w^{(c)}_{ij}$ as in
Subsection \ref{subsec:presentation}.
Let $C_R:=\{w_1, \dots, w_k\}$ be the set of all independent elements in $\overline{R}$ such that (i) the weight of $w$ is  minimum and (ii) 
for each $R\cup\{w_l\}$, we have $m^{(c)}_{ij}= \sum_{w\in R\cup\{w_l\}} w^{(c)}_{ij} \geq 1$, where $1\leq i < j \leq 4$ and $2 \leq c \leq 3$. 
Let $(\Gamma, \gamma)$ be a crystallization of a $3$-manifold such that $(\Gamma, \gamma)$ is minimal with respect to
$(\langle S \mid R\rangle, R)$ and yields $\mathcal{R} = R \cup \{w\}$, where $w \in C_R$.  Without loss of generality, let $\Gamma_{\{0,1\}}= \sqcup_{i=1}^4G_i$ such that $G_i$ represents the generator
$x_i$ for $1 \leq i \leq 3$ and $G_4$ represents $x_4$ (cf. Eq. \eqref{tildar} for construction of $\tilde{r}$).
Let $n_i$ be the total number of appearances of $x_i$ in the four
relations in $\mathcal{R}$ for $1\leq i \leq 3$ and $n_4=\lambda(\langle S \mid R \rangle, R)-(n_1+n_2+n_3)$. 
Then, the total number of vertices in $G_i$ should be $n_i$. Assume 
$G_i=C_{n_{i}}(x^{(1)}_i, \dots, x^{(n_i)}_i)$ for $1\leq i \leq 4$.
Clearly, each $n_i$ is even and $n_{1}+n_{2}+n_{3}+n_4=\#V(\Gamma)$. Without loss of generality, we can assume that $x^{(2j-1)}_ix^{(2j)}_i 
\in \gamma^{-1}(1)$ and $x^{(2j)}_ix^{(2j+1)}_i \in \gamma^{-1}(0)$
with $x^{(n_i+1)}_i=x^{(1)}_i$ for $1 \leq j \leq n_i/2$ and $1\leq i \leq 4$. 
Here and after, the additions and subtractions
at the point `$\ast$' in $x_i^{(\ast)}$ are modulo $n_i$ for $1\leq i \leq 4$.
Let the colors $2$ and $3$ be the colors `$i$' and `$j$' respectively 
as in construction of $\tilde{r}$ for $r\in \mathcal{R}$ (cf. Eq. \eqref{tildar}).
Then, the number of edges of color $c$ between $G_i$ and $G_j$ is $m_{ij}^{(c)}$
for $2\leq c \leq 3$ and $1 \leq i < j \leq 4$. Therefore, $m:=\sum_{1 \leq i < j \leq 4} m_{ij}^{(c)} = \#V(\Gamma)/2$.
Now, the maximum number of bi-colored $4$-cycles in $\Gamma_{\{0,1,c\}}$
with two edges of color $c$ is $\sum_{1 \leq i < j \leq 4} (m_{ij}^{(c)}-1) =m-6$. Again, by  Proposition \ref{prop:gagliardi79a},
$4+g_{0c}+g_{1c}= \#V(\Gamma)/2+2$, i.e., $g_{0c}+g_{1c}=m-2$. Since  $G_i$ and $G_j$ 
are connected by an edge of color $c$ for $1 \leq i < j \leq 4$ and $2\leq c \leq 3$, we have at least four distinct 
bi-colored paths $P_5$ with some edges of color $c$ in $\Gamma_{\{0,1,c\}}$ which touch
$G_i, G_j, G_l$ for all distinct $i,j,l \in \{1,2,3,4\}$. Therefore, we must have $m-6$ bi-colored $4$-cycles and four bi-colored $6$-cycles with some edges
of color $c$ in $\Gamma_{\{0,1,c\}}$. Thus, $m_{ij}^{(c)}$ edges of color $c$ between  $G_i$ and $G_j$ yield $m_{ij}^{(c)}-1$  bi-colored $4$-cycles  in $\Gamma_{\{0,1,c\}}$ for $1 \leq i < j \leq 4$ and
$2\leq c\leq3$. Therefore, two bi-colored $6$-cycles with some edges
of color $c$ in $\Gamma_{\{0,1,c\}}$ give unique choices for the remaining edges of color $c$.
Without loss, we can assume $\Gamma_{\{0,2\}}$ has a $6$-cycle $C_6(x^{(1)}_1, x^{(n_1)}_1, x_4^{(n_4)}, x_4^{(1)}, x^{(n_2)}_2,  x^{(1)}_2)$. 
Then, join $x^{(1)}_1x^{(1)}_2,\dots, x^{(m_{12}^{(2)})}_1x^{(m_{12}^{(2)})}_2$ by edges of color $2$. Without loss of generality, choose $x_3^{(p)} \in G_3$ such that
$C_6(x^{(m_{12}^{(2)})}_1, x^{(m_{12}^{(2)}+1)}_1, x_3^{(p)}, x_3^{(p+1)}, x^{(m_{12}^{(2)}+1)}_2,  x^{(m_{12}^{(2)})}_2)$ is a bi-colored cycle with three edges of color $2$. Therefore, we have a unique choice 
for  $\Gamma_{\{0,1,2\}}$ up to an isomorphism. The choices of two $6$-cycles with three edges
of color $3$ in $\Gamma_{\{0,1,3\}}$ give all possible $4$-colored graphs. If some graphs yield $(\langle S \mid R \rangle, R)$
then these satisfy all the properties of Proposition \ref{prop:gagliardi79a} and hence they are crystallizations of some $3$-manifolds.
By  similar arguments as in the proof of Theorem \ref{theorem:algorithm}, if $M$ is a closed connected prime manifold with fundamental group $(\langle S \mid R \rangle, R)$ and $(\Gamma,\gamma)$ is a crystallization, constructed from the pair
$(\langle S \mid R \rangle, R)$ then $(\Gamma,\gamma)$ is a crystallization of $M$.

\subsection{Algorithm 2}\label{subsec:algorithm2} 
We now present an algorithm for a presentation  $(\langle S \mid R\rangle, R)$ with  $\#S=\#R=3$ and $C_R \neq \emptyset$. This algorithm gives all crystallizations which yield the relation set $R\cup\{w\}$, where $w \in C_R$ and are minimum with respect to  $(\langle S \mid R\rangle, R)$.
\begin{enumerate}[(i)]
\item  Find the set $\{w_i \in \overline{R}, 1\leq i \leq k\}$ of independent words such that $\lambda(w_i)$ is minimum
and for each $R\cup\{w_l\}$, we have $m^{(c)}_{ij}= \sum_{w\in R\cup\{w_l\}} w^{(c)}_{ij} \geq 1$, where $1\leq i < j \leq 4$ and $2 \leq c \leq 3$.
Let $\mathcal{R} = R \cup \{w_1\}$ and consider a class of graphs $\mathcal{C}$  which is empty.

\item For $\mathcal{R}$, $(a)$ find $m_{ij}^{(c)}$ for $2\leq c \leq 3$ and $1 \leq i < j \leq 4$ and $(b)$ find $n_1, n_2, n_3, n_4$.

\item Consider four bi-colored cycles $G_i=C_{n_{i}}(x^{(1)}_i, \dots, x^{(n_i)}_i)$ for $1\leq i \leq 4$ such that $x^{(2j-1)}_ix^{(2j)}_i$ has color $1$ and 
$x^{(2j)}_ix^{(2j+1)}_i$ has color $0$ with the consideration $x^{(n_i+1)}_i=x^{(1)}_i$ for $1 \leq j \leq n_i/2$ and $1\leq i \leq 4$.

\item The sets $\{x^{(1)}_1x^{(1)}_2,\dots, x^{(m_{12}^{(2)})}_1x^{(m_{12}^{(2)})}_2\}$, $\{x^{(n_1)}_1x^{(n_4)}_4,\dots, x^{(n_1+1-m_{14}^{(2)})}_1x^{(n_4+1-m_{14}^{(2)})}_4\}$
and $\{x^{(n_2)}_2x^{(1)}_4,\dots, x^{(n_2+1-m_{24}^{(2)})}_4x^{(m_{24}^{(2)})}_4\}$
contain edges of color $2$. Without loss of generality, choose $x_3^{(p)} \in G_3$ such that
$C_6(x^{(m_{12}^{(2)})}_1, x^{(m_{12}^{(2)}+1)}_1, x_3^{(p)}, x_3^{(p+1)}, x^{(m_{12}^{(2)}+1)}_2,  x^{(m_{12}^{(2)})}_2)$ is a bi-colored cycle with three edges of 
color $2$. Therefore, the edges of the sets $\{x^{(m_{12}^{(2)}+1)}_2 x_3^{(p+1)}$,
$\dots$, $x^{(m_{12}^{(2)}+m_{23}^{(2)})}_2x_3^{(p+m_{23}^{(2)})}\}$, 
$\{x_3^{(p+m_{23}^{(2)}+1)} x_4^{(m_{24}^{(2)}+1)}$, $\dots$, $x_3^{(p+m_{23}^{(2)}+m_{34}^{(2)})}$ $x_4^{(m_{24}^{(2)}+m_{34}^{(2)})}\}$ and 
$\{x^{(m_{12}^{(2)}+1)}_1 x_3^{(p)}, \dots, x^{(m_{12}^{(2)}+m_{13}^{(2)})}_1 x_3^{(p+1-m_{13}^{(2)})}\}$ have also color $2$.

\item For each $1\leq q_1\leq n_1$, choose $x_2^{(q_2)} \in G_2$
such that  $\{x^{(q_1)}_1x^{(q_2)}_2,\dots, x^{(q_1-1+m_{12}^{(3)})}_1$ $x^{(q_2-1+m_{12}^{(3)})}_2\}$ $\subset \gamma^{-1}(3)$. Then, 
choose $x_3^{(q_3)} \in G_3$ and $x_4^{(q_4)} \in G_4$ such that, either
$\{x^{(q_1-1)}_1x^{(q_3)}_3, x^{(q_1+m_{12}^{(3)})}_1x^{(q_4)}_4\}$ or $\{x^{(q_1-1)}_1x^{(q_4)}_4, x^{(q_1+m_{12}^{(3)})}_1 x^{(q_3)}_3\}$ contains edges of color $3$.
There are $n_1 \times \frac{n_2}{2} \times 2 \times \frac{n_3}{2} \times \frac{n_4}{2}=\frac{n_1n_2n_3n_4}{4}$ choices for choosing these vertices and edges. Then, for each choice, join the remaining vertices by edges of color 3 as there is unique way to choose the remaining edges with the known $m_{12}^{(3)}+2$ edges of color 3. If some graphs yield $(\langle S \mid R \rangle, R)$ then put them in the class $\mathcal{C}$.

\item If $\mathcal{R} = R \cup\{w_i\}$, for some $i \in \{1, \dots, k-1\}$, choose $\mathcal{R} = R \cup\{w_{i+1}\}$ and go to step (ii). 
If $\mathcal{R} = R \cup\{w_{k}\}$ then $\mathcal{C}$ is the collection of all crystallizations which yield  $(\langle S \mid R \rangle, R)$ and 
are minimal with respect to $(\langle S \mid R \rangle, R)$. If $\mathcal{C}$ is empty then such a crystallization does not exist.
\end{enumerate}

\subsection{Constructions of crystallizations of \boldmath{ $M \langle m,n,k \rangle$}}
Recall that $M \langle m,n,k \rangle$  is the  closed connected orientable $3$-manifold with the fundamental group $\langle m,n,k \rangle$ which has a presentation 
$(\langle S \mid R_{mnk} \rangle, R_{mnk})$, where $S=\{x_1,x_2,x_3\}$ and $R_{mnk}=$
$\{x_1^{m-1}x_3^{-1}x_2^{-1}$, $x_2^{n-1}x_1^{-1}x_3^{-1},\ x_3^{k-1}x_2^{-1}x_1^{-1}\}$.
It is not difficult to prove that, $x_1^{m-2}x_3^{-1}x_2^{n-2}x_1^{-1}x_3^{k-2}x_2^{-1}$ is the only independent element in $\overline{R_{mnk}}$ of minimum weight. Observe that
\begin{eqnarray}
\lambda(x_1^{m-2}x_3^{-1}x_2^{n-2}x_1^{-1}x_3^{k-2}x_2^{-1}) = \left\{ \begin{array}{lcl}
2m & \mbox{if} & k=n=2, m \geq 3, \\
2m+2n-6 & \mbox{if} &  k=2, m,n \geq 3, \\
2m+2n+2k-12 & \mbox{if} & m,n,k \geq 3.
\end{array}\right. \nonumber
\end{eqnarray}

\noindent Therefore, 
\begin{eqnarray}
\lambda(\langle S \mid R_{mnk} \rangle, R_{mnk})= \left\{ \begin{array}{lcl}
4(m+2) & \mbox{if} & k=n=2, m \geq 3, \\
4(m+n)-2 & \mbox{if} &  k=2, m,n \geq 3, \\
4(m+n+k-3) & \mbox{if} & m,n,k \geq 3.
\end{array}\right. \nonumber
\end{eqnarray}

Since $m_{ij}^{(c)} \geq 1$ for the set $R_{mnk} \cup \{x_1^{m-2}x_3^{-1}x_2^{n-2}x_1^{-1}x_3^{k-2}x_2^{-1}\}$,  where $1\leq i < j\leq 4$ and $2\leq c\leq 3$,
we have $x_1^{m-2}x_3^{-1}x_2^{n-2}x_1^{-1}x_3^{k-2}x_2^{-1} \in C_{R_{mnk}}$. Thus, we can apply Algorithm $2$. Recall that $M \langle m,2,2 \rangle \cong S^3/Q_{4m}$.

\begin{theo} \label{theorem:q4n}
For $m \geq 3$, $S^3/Q_{4m}$ has a crystallization with $4(m+2)$ vertices which
is unique and minimal with respect to  $(\langle S \mid R_{m22} \rangle, R_{m22})$.
\end{theo}

\begin{proof}
Since $C_{R_{m22}} \neq \emptyset$ for $m \geq 3$, we can apply  Algorithm $2$. Let $\mathcal{R}= R_{m22} \cup\{x_1^{m-2}x_3^{-1}x_1^{-1}$ $x_2^{-1}\}$.
Thus, $m_{13}^{(2)}=m_{23}^{(2)}=m_{24}^{(2)}=m_{12}^{(3)}=m_{23}^{(3)}=m_{34}^{(3)}=1$, 
$m_{12}^{(2)}=m_{34}^{(2)}=m_{13}^{(3)}=m_{24}^{(3)}=2$ and $m_{14}^{(2)}=m_{14}^{(3)}=2m-3$. Observe that, 
$(n_1,n_2,n_3, n_4)=(2m,4,4,2m)$ and  $G_i=C_{n_{i}}(x^{(1)}_i, \dots, x^{(n_i)}_i)$ for $1\leq i \leq 4$ as in Figure \ref{fig:1}. 
Choose $x_3^{(p)}= x_3^{(2)}$ as in Algorithm $2$, then the $3$-colored graph with the color set $\{0,1,2\}$ is as in Figure \ref{fig:1}, which is unique up to an isomorphism. 
For the choices $(q_1,q_2,q_3,q_4)=(5,3,1,2)$ and $(x^{(q_1-1)}_1x^{(q_4)}_4, x^{(q_1+m_{12}^{(3)})}_1 x^{(q_3)}_3) = (x^{(4)}_1x^{(2)}_4, x^{(6)}_1 x^{(1)}_3)$, we get a  $4$-colored graph which yields 
$(\langle S \mid R_{m22} \rangle, R_{m22})$. Therefore, for each $m \geq 3$, we get 
a crystallization $(\Gamma, \gamma)$ of the $3$-manifold $S^3/Q_{4m}$.

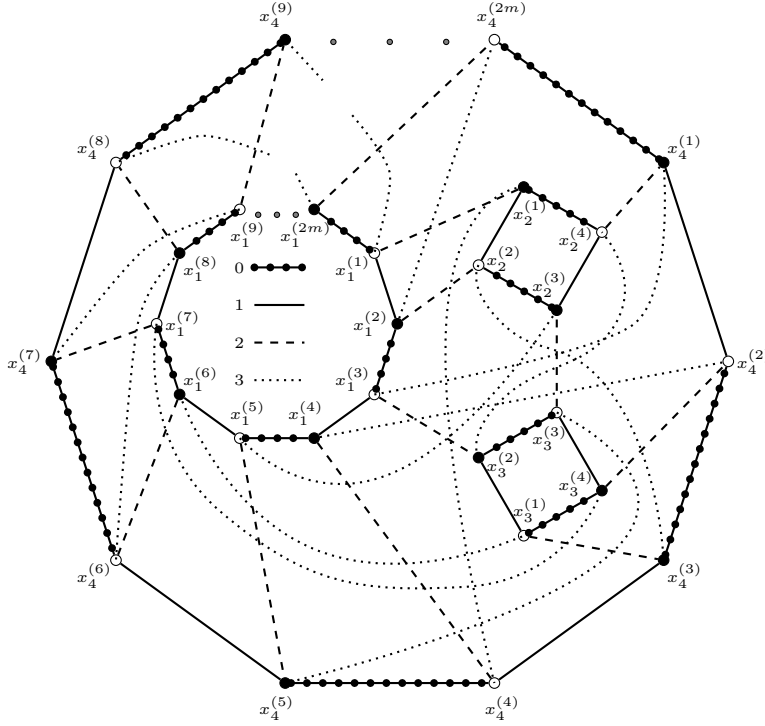
\begin{figure}[ht]
\tikzstyle{vert}=[circle, draw, fill=black!100, inner sep=0pt, minimum width=4pt] 
\tikzstyle{vertex}=[circle, draw, fill=black!00, inner sep=0pt, minimum width=4pt] 
\tikzstyle{ver}=[] 
\tikzstyle{extra}=[circle, draw, fill=black!50, inner sep=0pt, minimum width=2pt] 
\tikzstyle{edge} = [draw,thick,-] 
\centering
\begin{tikzpicture}[scale=0.5]
\begin{scope}[shift={(-3,1)}]
\foreach \x/\y in {72/x_1^{(2m)},144/x_1^{(8)},216/x_1^{(6)},288/x_1^{(4)},0/x_1^{(2)}}
{ \node[ver](\y) at (\x:2.5){\tiny{$\y$}};
    \node[vert] (\y) at (\x:3.2){};} 
    \foreach \x/\y in {108/x_1^{(9)},180/x_1^{(7)},252/x_1^{(5)},324/x_1^{(3)},36/x_1^{(1)}}{ 
    \node[ver] (\y) at (\x:2.5){\tiny{$\y$}};
    \node[vertex] (\y) at (\x:3.2){};}
    \foreach \x/\y in {x_1^{(5)}/x_1^{(6)},x_1^{(7)}/x_1^{(8)},x_1^{(1)}/x_1^{(2)},x_1^{(3)}/x_1^{(4)}}{
\path[edge] (\x) -- (\y);} 

\foreach \x/\y in {x_1^{(5)}/x_1^{(4)},x_1^{(7)}/x_1^{(6)},x_1^{(9)}/x_1^{(8)},x_1^{(3)}/x_1^{(2)},x_1^{(1)}/x_1^{(2m)}}
{\path[edge] (\x) -- (\y);} 

\foreach \x/\y in {x_1^{(5)}/x_1^{(4)},x_1^{(7)}/x_1^{(6)},x_1^{(9)}/x_1^{(8)},x_1^{(3)}/x_1^{(2)},x_1^{(1)}/x_1^{(2m)}}
{\draw [line width=3pt, line cap=round, dash pattern=on 0pt off 2\pgflinewidth]  (\x) -- (\y);} 
\end{scope}

\begin{scope}[shift={(4,3)}, rotate=60]
\node[vert] (x^{(1)}_2) at (1.2,1.2){};
\node[vertex] (x^{(2)}_2) at (-1.2,1.2){};
\node[vert] (x^{(3)}_2) at (-1.2,-1.2){};
\node[vertex] (x^{(4)}_2) at (1.2,-1.2){};
\node[ver] () at (0.7,0.7){\tiny{$x^{(1)}_2$}};
\node[ver] () at (-0.7,0.7){\tiny{$x^{(2)}_2$}};
\node[ver] () at (-0.7,-0.7){\tiny{$x^{(3)}_2$}};
\node[ver] () at (0.7,-0.7){\tiny{$x^{(4)}_2$}};
\end{scope}

\begin{scope}[shift={(4,-3)}, rotate=30]
\node[vertex] (x^{(3)}_3) at (1.2,1.2){};
\node[vert] (x^{(2)}_3) at (-1.2,1.2){};
\node[vertex] (x^{(1)}_3) at (-1.2,-1.2){};
\node[vert] (x^{(4)}_3) at (1.2,-1.2){};
\node[ver] () at (0.7,0.7){\tiny{$x^{(3)}_3$}};
\node[ver] () at (-0.7,0.7){\tiny{$x^{(2)}_3$}};
\node[ver] () at (-0.7,-0.7){\tiny{$x^{(1)}_3$}};
\node[ver] () at (0.7,-0.7){\tiny{$x^{(4)}_3$}};
\end{scope}

\begin{scope}[]
\foreach \x/\y in {72/x^{(2m)}_4,144/x^{(8)}_4,216/x^{(6)}_4,288/x^{(4)}_4,0/x^{(2)}_4}
{ \node[ver](\y) at (\x:9.7){\tiny{$\y$}};
    \node[vertex] (\y) at (\x:9){};} 
    \foreach \x/\y in {108/x^{(9)}_4,180/x^{(7)}_4,252/x^{(5)}_4,324/x^{(3)}_4,36/x^{(1)}_4}{ 
    \node[ver] (\y) at (\x:9.7){\tiny{$\y$}};
    \node[vert] (\y) at (\x:9){};}
    \foreach \x/\y in {x^{(5)}_4/x^{(6)}_4,x^{(7)}_4/x^{(8)}_4,x^{(1)}_4/x^{(2m)}_4,x^{(1)}_4/x^{(2)}_4,x^{(3)}_4/x^{(4)}_4}{
\path[edge] (\x) -- (\y);} 

\foreach \x/\y in {x^{(5)}_4/x^{(4)}_4,x^{(7)}_4/x^{(6)}_4,x^{(9)}_4/x^{(8)}_4,x^{(3)}_4/x^{(2)}_4}
{\path[edge] (\x) -- (\y);} 

\foreach \x/\y in {x^{(5)}_4/x^{(4)}_4,x^{(7)}_4/x^{(6)}_4,x^{(9)}_4/x^{(8)}_4,x^{(3)}_4/x^{(2)}_4,x^{(1)}_4/x^{(2m)}_4}
{\draw [line width=3pt, line cap=round, dash pattern=on 0pt off 2\pgflinewidth]  (\x) -- (\y);} 
\end{scope}

\begin{scope}[shift={(-5,5.5)}]
\node[ver] (3) at (1,-6){\tiny{$3$}}; 
\node[ver] (2) at (1,-5){\tiny{$2$}};
\node[ver](1) at (1,-4){\tiny{$1$}}; 
\node[ver](0) at (1,-3){\tiny{$0$}}; 
\node[ver] (8) at (3,-6){}; 
\node[ver](7) at (3,-5){}; 
\node[ver](6) at (3,-4){}; 
\node[ver] (5) at (3,-3){};
\end{scope}

\foreach \x/\y in {x^{(1)}_2/x^{(4)}_2,x^{(3)}_2/x^{(2)}_2,x^{(1)}_2/x^{(2)}_2,x^{(3)}_2/x^{(4)}_2,x^{(1)}_3/x^{(4)}_3,x^{(3)}_3/x^{(2)}_3,x^{(1)}_3/x^{(2)}_3,x^{(3)}_3/x^{(4)}_3,0/5,1/6}{
\path[edge] (\x) -- (\y);} 
\foreach \x/\y in {x^{(1)}_3/x^{(4)}_3,x^{(3)}_3/x^{(2)}_3,x^{(1)}_2/x^{(4)}_2,x^{(3)}_2/x^{(2)}_2,0/5}
{\draw [line width=3pt, line cap=round, dash pattern=on 0pt off 2\pgflinewidth]  (\x) -- (\y);} 
    
\foreach \x/\y in {x_1^{(4)}/x^{(4)}_4,x_1^{(5)}/x^{(5)}_4,x_1^{(6)}/x^{(6)}_4,x_1^{(7)}/x^{(7)}_4,x_1^{(8)}/x^{(8)}_4,x_1^{(9)}/x^{(9)}_4,x_1^{(2m)}/x^{(2m)}_4,x_1^{(2)}/x^{(2)}_2,
x_1^{(3)}/x^{(2)}_3,x^{(1)}_3/x^{(3)}_4,x^{(4)}_3/x^{(2)}_4,x^{(3)}_3/x^{(3)}_2,x^{(4)}_2/x^{(1)}_4,x^{(1)}_2/x_1^{(1)},2/7}{
\path[edge, dashed] (\x) -- (\y);} 

\foreach \x/\y in {-1.5/8.5,0/8.5,1.5/8.5,-3.5/3.9,-3/3.9,-2.5/3.9}
{\node[extra] () at (\x,\y){};}

\foreach \x/\y in {3/8,x_1^{(4)}/x^{(2)}_4,x_1^{(2)}/x^{(2m)}_4}{
\path[edge,  dotted] (\x) -- (\y);}
     
\draw[edge, dotted] plot [smooth,tension=1] coordinates{(x^{(2)}_3) (3,-1) (6,1) (x^{(4)}_2) };
\draw[edge, dotted] plot [smooth,tension=1] coordinates{(x_1^{(5)}) (0,-3) (x^{(3)}_2) };
\draw[edge, dotted] plot [smooth,tension=1] coordinates{(x^{(2)}_2) (3,1) (6,-1) (x^{(3)}_4) };
\draw[edge, dotted] plot [smooth,tension=1] coordinates{(x^{(1)}_2) (1.5,1) (x^{(4)}_4) };
\draw[edge, dotted] plot [smooth,tension=1] coordinates{(x_1^{(6)}) (-2,-5) (x^{(1)}_3) };
\draw[edge, dotted] plot [smooth,tension=1] coordinates{(x_1^{(7)}) (-4.5,-4) (2,-6) (x^{(4)}_3) };
\draw[edge, dotted] plot [smooth,tension=1] coordinates{(x^{(3)}_3) (6,-5) (x^{(5)}_4) };
\draw[edge, dotted] plot [smooth,tension=0.5] coordinates{(x_1^{(3)}) (6.5,1) (x^{(1)}_4) };
\draw[edge, dotted] plot [smooth,tension=0.5] coordinates{(x_1^{(8)}) (-6.7,1) (x^{(6)}_4) };
\draw[edge, dotted] plot [smooth,tension=0.5] coordinates{(x_1^{(9)}) (-6.5,3) (x^{(7)}_4) };
\draw[edge, dotted] plot [smooth,tension=0.5] coordinates{(-3.2,5.5) (-5,6) (x^{(8)}_4) };
\draw[edge, dotted] plot [smooth,tension=0.5] coordinates{(-2.5,5) (-2,4) (x_1^{(2m)}) };

\draw[edge, dotted] plot [smooth,tension=0.1] coordinates{(-1.8,7.5) (-1.8,7.5) (x^{(9)}_4) };
\draw[edge, dotted] plot [smooth,tension=0.5] coordinates{(-1,6.55) (0,5) (x_1^{(1)}) };

\end{tikzpicture}
\caption{Crystallization of $S^3/Q_{4m}$ for  $m \geq 3$}\label{fig:1}
\end{figure}

Now, we show that the crystallization $(\Gamma, \gamma)$ is unique.
Here we choose the pair of colors $(2,3)=(i,j)$ as in the construction of $\tilde{r}$ for $r\in \mathcal{R}$ (cf. Eq. \eqref{tildar}). 
From the construction of $\tilde{r}$ for $r\in \mathcal{R}$, it is clear that, either $x^{(1)}_2$ or $x^{(2)}_2$ is the 
starting vertex $v_1$ of the component of $\Gamma_{\{2,3\}}$,
which yields the relation $x_1^{m-1}x_3^{-1}x_2^{-1}$ (resp., $x_1^{m-2}x_3^{-1}x_1^{-1}x_2^{-1}$). If possible let $x^{(1)}_2$ be the starting vertex to yield the relation
$x_1^{m-1}x_3^{-1}x_2^{-1}$ then $x^{(2)}_2$  is the starting vertex to yield the relation $x_1^{m-2}x_3^{-1}x_1^{-1}x_2^{-1}$ and 
$x^{(1)}_1 x^{(2m-1)}_4, x^{(2m-1)}_1 x^{(2m-3)}_4,\dots, 
x^{(7)}_1 x^{(5)}_4 \in \gamma^{-1}(3)$ (as $\Gamma$ is bipartite). Since $m_{14}^{(3)}$ edges of color $3$ yield $m_{14}^{(3)}-1$ bi-colored $4$-cycle
in $\Gamma_{\{0,1,3\}}$, we have $x^{(2m)}_1 x^{(2m-2)}_4$, $x^{(2m-2)}_1 x^{(2m-4)}_4,\dots, 
x^{(8)}_1 x^{(6)}_4 \in \gamma^{-1}(3)$. Since $x^{(2)}_1 x^{(2m)}_4 \in \gamma^{-1}(3)$, the component of $\Gamma_{\{2,3\}}$ with starting vertex $x^{(2)}_2$ yields
a relation $x_1^{m-1}wx_2^{-1}$ for some $w \in F(S)$, which is not possible. Thus, $x^{(2)}_2$ is the starting vertex to yield the relation
$x_1^{m-1}x_3^{-1}x_2^{-1}$ and $x^{(2)}_1 x^{(2m)}_4, x^{(1)}_1 x^{(2m-1)}_4,\dots$, 
$x^{(8)}_1 x^{(6)}_4\in \gamma^{-1}(3)$ and  $x^{(7)}_1 x^{(5)}_4 \not \in \gamma^{-1}(3)$. Therefore, $x^{(3)}_1 x^{(1)}_4, x^{(4)}_1 x^{(2)}_4\in \gamma^{-1}(3)$ as $m_{14}^{(3)}=2m-3$. 
To yield the relation $x_1^{m-1}x_3^{-1}x_2^{-1}$, we have $x^{(6)}_1 x^{(1)}_3, x^{(2)}_2 x^{(3)}_4 \in \gamma^{-1}(3)$.
Since $x^{(3)}_1 x^{(1)}_4 \in \gamma^{-1}(3)$, we have $x^{(4)}_2x^{(2)}_3 \in \gamma^{-1}(3)$ and hence $C_4(x_1^{(3)},x^{(2)}_3,x^{(4)}_2,x^{(1)}_4)$ is the component of $\Gamma_{\{2,3\}}$ which yields the relation
$x_3x_2^{-1}x_1^{-1}$ with starting vertex $v_1=x^{(3)}_1$. Now, there is unique way to choose the remaining edges of color $3$ as in Figure \ref{fig:1}. 
Since $x_1^{m-2}x_3^{-1}x_1^{-1}x_2^{-1}$ is the only independent element in $\overline{R_{m22}}$ of minimum weight, the theorem follows.
\end{proof}
\begin{remark}

{\rm
For $m=2$, there is no crystallization of $S^3/Q_{4m}$ with $4(m+2)=16$ vertices (cf. \cite{bd14}).}
\end{remark}

\begin{theo} \label{theorem:mn2}
For $m,n \geq 3$, $M \langle m,n,2 \rangle$ has a crystallization with $4(m+n)-2$ vertices which
is unique and minimal with respect to  $(\langle S \mid R_{mn2} \rangle, R_{mn2})$.
\end{theo}

\begin{proof}
Since $C_{R_{mn2}} \neq \emptyset$ for $m,n \geq 3$, we can apply  Algorithm $2$. Let $\mathcal{R}= R_{mn2} \cup\{x_1^{m-2}x_3^{-1}x_2^{n-2}x_1^{-1}x_2^{-1}\}$.
Thus, $m_{13}^{(2)}=m_{23}^{(2)}=m_{23}^{(3)}=m_{34}^{(3)}=1$, $m_{24}^{(2)}=2n-4, m_{24}^{(3)}=2n-3$,
$m_{12}^{(2)}=m_{23}^{(2)}=m_{12}^{(3)}=m_{13}^{(3)}=2$, $m_{14}^{(2)}=2m-3$ and $m_{14}^{(3)}=2m-4$. Observe that, 
$(n_1,n_2,n_3, n_4)=(2m,2n,4,2(m+n-3))$ and  $G_i=C_{n_{i}}(x^{(1)}_i, \dots, x^{(n_i)}_i)$ for $1\leq i \leq 4$ as in Figure \ref{fig:mn2}. 
Choose $x_3^{(p)}= x_3^{(2)}$ as in Algorithm $2$, then the $3$-colored graph with the color set $\{0,1,2\}$ is as in Figure \ref{fig:mn2}, which is unique up to an isomorphism. 
For the choices $(q_1,q_2,q_3,q_4)=(4,2n-2,1,1)$ and $(x^{(q_1-1)}_1x^{(q_4)}_4, x^{(q_1+m_{12}^{(3)})}_1 x^{(q_3)}_3)= (x^{(3)}_1x^{(1)}_4, x^{(6)}_1 x^{(1)}_3)$, we get a  $4$-colored graph which yields 
$(\langle S \mid R_{mn2} \rangle, R_{mn2})$. Therefore, for each $m,n \geq 3$, we get 
a crystallization $(\Gamma, \gamma)$ of the $3$-manifold $M \langle m,n,2 \rangle$.

\begin{figure}[ht]
\tikzstyle{vert}=[circle, draw, fill=black!100, inner sep=0pt, minimum width=4pt] 
\tikzstyle{vertex}=[circle, draw, fill=black!00, inner sep=0pt, minimum width=4pt] 
\tikzstyle{ver}=[] 
\tikzstyle{extra}=[circle, draw, fill=black!50, inner sep=0pt, minimum width=2pt] 
\tikzstyle{edge} = [draw,thick,-] 
\centering
\begin{tikzpicture}[scale=0.5]
\begin{scope}[shift={(-4.5,2)}]
\foreach \x/\y in {72/x_1^{(2m)},144/x_1^{(8)},216/x_1^{(6)},288/x_1^{(4)},0/x_1^{(2)}}
{ \node[ver](\y) at (\x:2.5){\tiny{$\y$}};
    \node[vert] (\y) at (\x:3.2){};} 
    \foreach \x/\y in {108/x_1^{(9)},180/x_1^{(7)},252/x_1^{(5)},324/x_1^{(3)},36/x_1^{(1)}}{ 
    \node[ver] (\y) at (\x:2.5){\tiny{$\y$}};
    \node[vertex] (\y) at (\x:3.2){};}
    \foreach \x/\y in {x_1^{(5)}/x_1^{(6)},x_1^{(7)}/x_1^{(8)},x_1^{(1)}/x_1^{(2)},x_1^{(3)}/x_1^{(4)}}{
\path[edge] (\x) -- (\y);} 

\foreach \x/\y in {x_1^{(5)}/x_1^{(4)},x_1^{(7)}/x_1^{(6)},x_1^{(9)}/x_1^{(8)},x_1^{(3)}/x_1^{(2)},x_1^{(1)}/x_1^{(2m)}}
{\path[edge] (\x) -- (\y);} 

\foreach \x/\y in {x_1^{(5)}/x_1^{(4)},x_1^{(7)}/x_1^{(6)},x_1^{(9)}/x_1^{(8)},x_1^{(3)}/x_1^{(2)},x_1^{(1)}/x_1^{(2m)}}
{\draw [line width=3pt, line cap=round, dash pattern=on 0pt off 2\pgflinewidth]  (\x) -- (\y);} 
\foreach \x/\y in {-0.5/3,0/3,0.5/3}
{\node[extra] () at (\x,\y){};}

\end{scope}
\begin{scope}[shift={(4.5,2)}, rotate=-144]
\foreach \x/\y in {72/x_2^{(5)},144/x_2^{(2n-3)},216/x_2^{(2n-1)},288/x_2^{(1)},0/x_2^{(3)}}
{ \node[ver](\y) at (\x:2.5){\tiny{$\y~~$}};
    \node[vert] (\y) at (\x:3.2){};} 
    \foreach \x/\y in {108/x_2^{(2n-4)},180/x_2^{(2n-2)},252/x_2^{(2n)},324/x_2^{(2)},36/x_2^{(4)}}{ 
    \node[ver] (\y) at (\x:2.5){\tiny{$\y~~$}};
    \node[vertex] (\y) at (\x:3.2){};}
    \foreach \x/\y in {x_2^{(2n)}/x_2^{(2n-1)},x_2^{(2n-2)}/x_2^{(2n-3)},x_2^{(4)}/x_2^{(3)},x_2^{(2)}/x_2^{(1)}}{
\path[edge] (\x) -- (\y);} 

\foreach \x/\y in {x_2^{(2n)}/x_2^{(1)},x_2^{(2n-2)}/x_2^{(2n-1)},x_2^{(2n-4)}/x_2^{(2n-3)},x_2^{(2)}/x_2^{(3)},x_2^{(4)}/x_2^{(5)}}
{\path[edge] (\x) -- (\y);} 

\foreach \x/\y in {x_2^{(2n)}/x_2^{(1)},x_2^{(2n-2)}/x_2^{(2n-1)},x_2^{(2n-4)}/x_2^{(2n-3)},x_2^{(2)}/x_2^{(3)},x_2^{(4)}/x_2^{(5)}}
{\draw [line width=3pt, line cap=round, dash pattern=on 0pt off 2\pgflinewidth]  (\x) -- (\y);} 
\foreach \x/\y in {-0.5/3,0/3,0.5/3}
{\node[extra] () at (\x,\y){};} 

\end{scope}

\begin{scope}[shift={(0,-4)}, rotate=135]
\foreach \x/\y in {180/x_3^{(4)},0/x_3^{(2)}}
{ \node[ver](\y) at (\x:1.3){\tiny{$\y$}};
    \node[vert] (\y) at (\x:2){};} 
    \foreach \x/\y in {270/x_3^{(3)},90/x_3^{(1)}}{ 
    \node[ver] (\y) at (\x:1.3){\tiny{$\y$}};
    \node[vertex] (\y) at (\x:2){};}
    \foreach \x/\y in {x_3^{(1)}/x_3^{(2)},x_3^{(3)}/x_3^{(4)}}{
\path[edge] (\x) -- (\y);} 

\foreach \x/\y in {x_3^{(1)}/x_3^{(4)},x_3^{(3)}/x_3^{(2)}}
{\path[edge] (\x) -- (\y);} 

\foreach \x/\y in {x_3^{(1)}/x_3^{(4)},x_3^{(3)}/x_3^{(2)}}
{\draw [line width=3pt, line cap=round, dash pattern=on 0pt off 2\pgflinewidth]  (\x) -- (\y);} 

\end{scope}

\begin{scope}[rotate=-50]
\foreach \x/\y in {0/x_4^{(2n-4)},50.42/x_4^{(4)},100.48/x_4^{(2)},154.28/x_4^{(2m+2n-6)},
204.70/x_4^{(2n+2)},255.12/x_4^{(2n)},305.54/x_4^{(2n-2)}}
{ \node[ver](\y) at (\x:11.3){\tiny{$\y$}};
    \node[vertex] (\y) at (\x:10.5){};}

\foreach \x/\y in {25.21/x_4^{(5)},75.63/x_4^{(3)},126.05/x_4^{(1)},176.47/x_4^{(2n+3)},
226.89/x_4^{(2n+1)},277.31/x_4^{(2n-1)},327.73/x_4^{(2n-3)}}
{ \node[ver](\y) at (\x:11.3){\tiny{$\y$}};
    \node[vert] (\y) at (\x:10.5){};} 
    
\foreach \x in {160,166,172,5.5,11.5,17.5}
{\node[extra] () at (\x:10.5){};} 
\end{scope} 

\foreach \x/\y in {x_4^{(2n+2)}/x_4^{(2n+1)},x_4^{(2n)}/x_4^{(2n-1)},x_4^{(1)}/x_4^{(2)},x_4^{(3)}/x_4^{(4)},
x_4^{(2n-3)}/x_4^{(2n-2)},x_4^{(2n-2)}/x_4^{(2n-1)}}{\path[edge] (\x) -- (\y);}  

\foreach \x/\y in {x_4^{(2n+3)}/x_4^{(2n+2)},x_4^{(2n+1)}/x_4^{(2n)},x_4^{(2m+2n-6)}/x_4^{(1)},x_4^{(2)}/x_4^{(3)},
x_4^{(4)}/x_4^{(5)},x_4^{(2n-4)}/x_4^{(2n-3)}}{\path[edge] (\x) -- (\y);}
\foreach \x/\y in {x_4^{(2n+3)}/x_4^{(2n+2)},x_4^{(2n+1)}/x_4^{(2n)},x_4^{(2m+2n-6)}/x_4^{(1)},x_4^{(2)}/x_4^{(3)},
x_4^{(4)}/x_4^{(5)},x_4^{(2n-4)}/x_4^{(2n-3)}}{\draw [line width=3pt, line cap=round, dash pattern=on 0pt off 2\pgflinewidth]  (\x) -- (\y);} 

\foreach \x/\y in {x_1^{(1)}/x_2^{(1)},x_1^{(2)}/x_2^{(2)},x_1^{(3)}/x_3^{(2)},x_1^{(4)}/x_4^{(2n-2)},x_2^{(3)}/x_3^{(3)},x_3^{(1)}/x_4^{(2n-3)},
x_1^{(2m)}/x_4^{(2m+2n-6)},x_1^{(9)}/x_4^{(2n+3)},x_1^{(8)}/x_4^{(2n+2)},x_1^{(7)}/x_4^{(2n+1)},
x_1^{(6)}/x_4^{(2n)},x_1^{(5)}/x_4^{(2n-1)},x_2^{(2n)}/x_4^{(1)},x_2^{(2n-1)}/x_4^{(2)},x_2^{(2n-2)}/x_4^{(3)},x_2^{(2n-3)}/x_4^{(4)},
x_2^{(2n-4)}/x_4^{(5)},x_2^{(5)}/x_4^{(2n-4)},x_3^{(4)}/x_2^{(4)}}{\path[edge, dashed] (\x) -- (\y);}   

\draw[edge, dotted] plot [smooth,tension=1] coordinates{(x_1^{(3)}) (0,3) (x_4^{(1)})}; 
\draw[edge, dotted] plot [smooth,tension=1] coordinates{(x_1^{(2)}) (-1,5) (x_4^{(2m+2n-6)})};   
\draw[edge, dotted] plot [smooth,tension=1] coordinates{(x_1^{(9)}) (-8,4)  (x_4^{(2n+1)})};   
\draw[edge, dotted] plot [smooth,tension=1] coordinates{(x_1^{(8)}) (-8.5,2) (x_4^{(2n)})}; 
\draw[edge, dotted] plot [smooth,tension=1] coordinates{(x_1^{(6)}) (-5,-4) (x_3^{(1)})}; 
\draw[edge, dotted] plot [smooth,tension=1] coordinates{(x_1^{(7)}) (-5,-5) (x_3^{(4)})}; 
\draw[edge, dotted] plot [smooth,tension=1] coordinates{(x_3^{(2)}) (1,3.5) (x_2^{(2n)})};
\draw[edge, dotted] plot [smooth,tension=1] coordinates{(x_2^{(1)}) (4,2) (3,-6)(x_4^{(2n-2)})};
\draw[edge, dotted] plot [smooth,tension=1] coordinates{(x_3^{(3)}) (2,-6) (x_4^{(2n-1)})};
\draw[edge, dotted] plot [smooth,tension=1] coordinates{(x_2^{(2)}) (-3,-4) (x_4^{(2n-3)})};
\draw[edge, dotted] plot [smooth,tension=1] coordinates{(x_2^{(3)}) (3,-3) (x_4^{(2n-4)})};
\draw[edge, dotted] plot [smooth,tension=1] coordinates{(x_2^{(4)}) (6.5,-3.2) };
\draw[edge, dotted] plot [smooth,tension=1] coordinates{(x_4^{(5)}) (8.5,-4)};
\draw[edge, dotted] plot [smooth,tension=1] coordinates{(x_2^{(5)}) (6,-1.3)};
\draw[edge, dotted] plot [smooth,tension=1] coordinates{(7.5,-1.5) (9,-1) (x_4^{(4)})};
\draw[edge, dotted] plot [smooth,tension=1] coordinates{(x_2^{(2n-4)}) (9,3) (x_4^{(3)})};
\draw[edge, dotted] plot [smooth,tension=1] coordinates{(x_2^{(2n-3)}) (8,5) (x_4^{(2)})};
\draw[edge, dotted] plot [smooth,tension=0.5] coordinates{(x_1^{(5)}) (1,5.5) (x_2^{(2n-1)})};
\draw[edge, dotted] plot [smooth,tension=0.5] coordinates{(x_1^{(4)})(1,1) (5,1)(x_2^{(2n-2)})};
\draw[edge, dotted] plot [smooth,tension=1] coordinates{(x_4^{(2n+2)}) (-7,6) (-5,5.8)};
\draw[edge, dotted] plot [smooth,tension=0.5] coordinates{(x_4^{(2n+3)})(-5,8)};
\draw[edge, dotted] plot [smooth,tension=0.5] coordinates{(x_1^{(2m)})(-4,5.5)};
\draw[edge, dotted] plot [smooth,tension=1] coordinates{(x_1^{(1)}) (-2,5.5)(-3.5,7.5)};
\end{tikzpicture}
\caption{Crystallization of $M\langle m,n,2 \rangle$ for $m,n \geq 3$}\label{fig:mn2}
\end{figure}
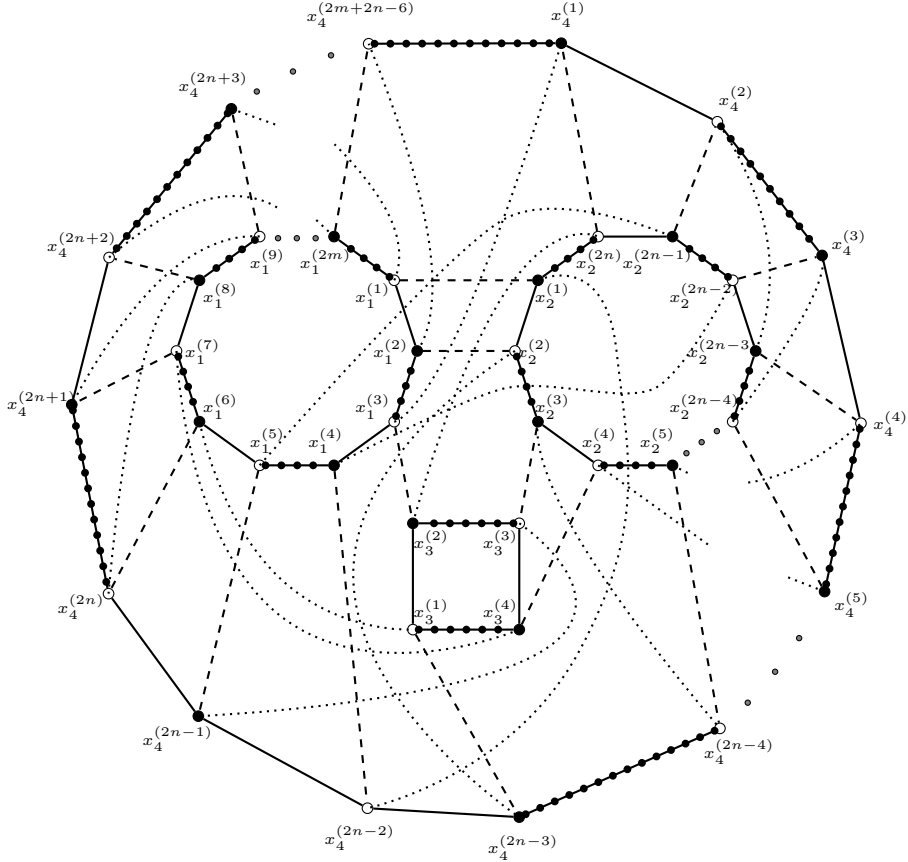

Now, we show that the crystallization $(\Gamma, \gamma)$ is unique.
Here we choose the pair of colors $(2,3)=(i,j)$ as in construction of $\tilde{r}$ for $r\in \mathcal{R}$ (cf. Eq. \eqref{tildar}).
By  similar arguments as in the proof of Theorem \ref{theorem:q4n}, $x^{(2)}_2$ is the starting vertex to yield the relation
$x_1^{m-1}x_3^{-1}x_2^{-1}$ and hence $x^{(2)}_1 x^{(2m+2n-6)}_4, x^{(1)}_1 x^{(2m+2n-5)}_4,\dots, 
x^{(8)}_1 x^{(2n)}_4 \in \gamma^{-1}(3)$ and  $x^{(7)}_1 x^{(2n-1)}_4 \not \in \gamma^{-1}(3)$. Therefore, $x^{(3)}_1 x^{(1)}_4\in \gamma^{-1}(3)$ as $m_{14}^{(3)}=2m-4$. 
Since $m_{34}^{(2)}=1$, to yield the relation $x_1^{m-1}x_3^{-1}x_2^{-1}$, we have $x^{(6)}_1 x^{(1)}_3, x^{(2)}_2 x^{(2n-3)}_4 \in \gamma^{-1}(3)$.
Similarly, $x^{(3)}_3$ is the starting vertex to yield the relation
$x_2^{n-1}x_1^{-1}x_3^{-1}$ and hence $x^{(3)}_2 x^{(2n-4)}_4, x^{(4)}_2 x^{(2n-5)}_4,\dots, 
x^{(2n-3)}_2 x^{(2)}_4$ $\in \gamma^{-1}(3)$ and  $x^{(2n-2)}_2 x^{(1)}_4 \not \in \gamma^{-1}(3)$. 
Therefore, $x^{(2)}_2 x^{(2n-3)}_4, x^{(1)}_2 x^{(2n-2)}_4 \in \gamma^{-1}(3)$ as $m_{24}^{(3)}$ $=2n-3$ and hence
$x^{(7)}_1 x^{(4)}_3, x^{(4)}_1 x^{(2n-2)}_2, x^{(1)}_2 x^{(2n-2)}_4 \in \gamma^{-1}(3)$ to yield the relation $x_1^{m-2}x_3^{-1}x_2^{n-2}x_1^{-1}$ $x_2^{-1}$.
Now, there is unique way to choose the remaining edges of color $3$ as in Figure \ref{fig:mn2}. 
Since $x_1^{m-2}x_3^{-1}x_2^{n-2}x_1^{-1}x_2^{-1}$ is the only independent element in $\overline{R_{mn2}}$ of minimum weight, the theorem follows.
\end{proof}

\begin{theo} \label{theorem:mnk}
For $m,n,k \geq 3$, $M \langle m,n,k \rangle$ has a crystallization with $4(m+n+k-3)$ vertices which
is unique and minimal with respect to  $(\langle S \mid R_{mnk} \rangle, R_{mnk})$.
\end{theo}

\begin{figure}[ht]
\tikzstyle{vert}=[circle, draw, fill=black!100, inner sep=0pt, minimum width=4pt] 
\tikzstyle{vertex}=[circle, draw, fill=black!00, inner sep=0pt, minimum width=4pt] 
\tikzstyle{ver}=[] 
\tikzstyle{extra}=[circle, draw, fill=black!50, inner sep=0pt, minimum width=2pt] 
\tikzstyle{edge} = [draw,thick,-] 
\centering
\begin{tikzpicture}[scale=0.55]
\begin{scope}[shift={(-4.5,2)}]
\foreach \x/\y in {72/x_1^{(2m)},144/x_1^{(8)},216/x_1^{(6)},288/x_1^{(4)},0/x_1^{(2)}}
{ \node[ver](\y) at (\x:2.5){\tiny{$\y$}};
    \node[vert] (\y) at (\x:3.2){};} 
    \foreach \x/\y in {108/x_1^{(9)},180/x_1^{(7)},252/x_1^{(5)},324/x_1^{(3)},36/x_1^{(1)}}{ 
    \node[ver] (\y) at (\x:2.5){\tiny{$\y$}};
    \node[vertex] (\y) at (\x:3.2){};}
    \foreach \x/\y in {x_1^{(5)}/x_1^{(6)},x_1^{(7)}/x_1^{(8)},x_1^{(1)}/x_1^{(2)},x_1^{(3)}/x_1^{(4)}}{
\path[edge] (\x) -- (\y);} 

\foreach \x/\y in {x_1^{(5)}/x_1^{(4)},x_1^{(7)}/x_1^{(6)},x_1^{(9)}/x_1^{(8)},x_1^{(3)}/x_1^{(2)},x_1^{(1)}/x_1^{(2m)}}
{\path[edge] (\x) -- (\y);} 

\foreach \x/\y in {x_1^{(5)}/x_1^{(4)},x_1^{(7)}/x_1^{(6)},x_1^{(9)}/x_1^{(8)},x_1^{(3)}/x_1^{(2)},x_1^{(1)}/x_1^{(2m)}}
{\draw [line width=3pt, line cap=round, dash pattern=on 0pt off 2\pgflinewidth]  (\x) -- (\y);} 
\foreach \x/\y in {-0.5/3,0/3,0.5/3}
{\node[extra] () at (\x,\y){};}

\end{scope}
\begin{scope}[shift={(4.5,2)}, rotate=-144]
\foreach \x/\y in {72/x_2^{(5)},144/x_2^{(2n-3)},216/x_2^{(2n-1)},288/x_2^{(1)},0/x_2^{(3)}}
{ \node[ver](\y) at (\x:2.5){\tiny{$\y~~$}};
    \node[vert] (\y) at (\x:3.2){};} 
    \foreach \x/\y in {108/x_2^{(2n-4)},180/x_2^{(2n-2)},252/x_2^{(2n)},324/x_2^{(2)},36/x_2^{(4)}}{ 
    \node[ver] (\y) at (\x:2.5){\tiny{$\y~~$}};
    \node[vertex] (\y) at (\x:3.2){};}
    \foreach \x/\y in {x_2^{(2n)}/x_2^{(2n-1)},x_2^{(2n-2)}/x_2^{(2n-3)},x_2^{(4)}/x_2^{(3)},x_2^{(2)}/x_2^{(1)}}{
\path[edge] (\x) -- (\y);} 

\foreach \x/\y in {x_2^{(2n)}/x_2^{(1)},x_2^{(2n-2)}/x_2^{(2n-1)},x_2^{(2n-4)}/x_2^{(2n-3)},x_2^{(2)}/x_2^{(3)},x_2^{(4)}/x_2^{(5)}}
{\path[edge] (\x) -- (\y);} 

\foreach \x/\y in {x_2^{(2n)}/x_2^{(1)},x_2^{(2n-2)}/x_2^{(2n-1)},x_2^{(2n-4)}/x_2^{(2n-3)},x_2^{(2)}/x_2^{(3)},x_2^{(4)}/x_2^{(5)}}
{\draw [line width=3pt, line cap=round, dash pattern=on 0pt off 2\pgflinewidth]  (\x) -- (\y);} 
\foreach \x/\y in {-0.5/3,0/3,0.5/3}
{\node[extra] () at (\x,\y){};} 

\end{scope}

\begin{scope}[shift={(0,-4.5)}, rotate=108]
\foreach \x/\y in {72/x_3^{(2k)},144/x_3^{(8)},216/x_3^{(6)},288/x_3^{(4)},0/x_3^{(2)}}
{ \node[ver](\y) at (\x:2.5){\tiny{$\y$}};
    \node[vert] (\y) at (\x:3.2){};} 
    \foreach \x/\y in {108/x_3^{(9)},180/x_3^{(7)},252/x_3^{(5)},324/x_3^{(3)},36/x_3^{(1)}}{ 
    \node[ver] (\y) at (\x:2.5){\tiny{$\y$}};
    \node[vertex] (\y) at (\x:3.2){};}
    \foreach \x/\y in {x_3^{(5)}/x_3^{(6)},x_3^{(7)}/x_3^{(8)},x_3^{(1)}/x_3^{(2)},x_3^{(3)}/x_3^{(4)}}{
\path[edge] (\x) -- (\y);} 

\foreach \x/\y in {x_3^{(5)}/x_3^{(4)},x_3^{(7)}/x_3^{(6)},x_3^{(9)}/x_3^{(8)},x_3^{(3)}/x_3^{(2)},x_3^{(1)}/x_3^{(2k)}}
{\path[edge] (\x) -- (\y);} 

\foreach \x/\y in {x_3^{(5)}/x_3^{(4)},x_3^{(7)}/x_3^{(6)},x_3^{(9)}/x_3^{(8)},x_3^{(3)}/x_3^{(2)},x_3^{(1)}/x_3^{(2k)}}
{\draw [line width=3pt, line cap=round, dash pattern=on 0pt off 2\pgflinewidth]  (\x) -- (\y);} 
\foreach \x/\y in {-0.5/3,0/3,0.5/3}
{\node[extra] () at (\x,\y){};} 

\end{scope}

\begin{scope}[rotate=-20]
\foreach \x/\y in {0/x_4^{(2n-4)},40/x_4^{(4)},80/x_4^{(2)},120/x_4^{(2m+2n+2k-12)},
160/x_4^{(2n+2k-4)},200/x_4^{(2n+2k-6)},240/x_4^{(2n+2k-8)},280/x_4^{(2n)},320/x_4^{(2n-2)}}
{ \node[ver](\y) at (\x:11.8){\tiny{$\y$}};
    \node[vertex] (\y) at (\x:10.5){};}

\foreach \x/\y in {20/x_4^{(5)},60/x_4^{(3)},100/x_4^{(1)},140/x_4^{(2n+2k-3)},
180/x_4^{(2n+2k-5)},220/x_4^{(2n+2k-7)},260/x_4^{(2n+1)},300/x_4^{(2n-1)},340/x_4^{(2n-3)}}
{ \node[ver](\y) at (\x:11.8){\tiny{$\y$}};
    \node[vert] (\y) at (\x:10.5){};} 
    
\foreach \x in {125,130,135,245,250,255,5,10,15}
{\node[extra] () at (\x:10.5){};} 
\end{scope} 

\foreach \x/\y in {x_4^{(2n+2k-4)}/x_4^{(2n+2k-5)},x_4^{(2n+2k-6)}/x_4^{(2n+2k-7)},x_4^{(1)}/x_4^{(2)},x_4^{(3)}/x_4^{(4)},
x_4^{(2n-3)}/x_4^{(2n-2)},x_4^{(2n-1)}/x_4^{(2n)}}{\path[edge] (\x) -- (\y);}  

\foreach \x/\y in {x_4^{(2n+2k-3)}/x_4^{(2n+2k-4)},x_4^{(2n+2k-5)}/x_4^{(2n+2k-6)},x_4^{(2m+2n+2k-12)}/x_4^{(1)},x_4^{(2)}/x_4^{(3)},
x_4^{(4)}/x_4^{(5)},x_4^{(2n-4)}/x_4^{(2n-3)},x_4^{(2n-2)}/x_4^{(2n-1)},x_4^{(2n)}/x_4^{(2n+1)},x_4^{(2n+2k-8)}/x_4^{(2n+2k-7)}}{\path[edge] (\x) -- (\y);}
\foreach \x/\y in {x_4^{(2n+2k-3)}/x_4^{(2n+2k-4)},x_4^{(2n+2k-5)}/x_4^{(2n+2k-6)},x_4^{(2m+2n+2k-12)}/x_4^{(1)},x_4^{(2)}/x_4^{(3)},
x_4^{(4)}/x_4^{(5)},x_4^{(2n-4)}/x_4^{(2n-3)},x_4^{(2n-2)}/x_4^{(2n-1)},x_4^{(2n)}/x_4^{(2n+1)},x_4^{(2n+2k-8)}/x_4^{(2n+2k-7)}}{\draw [line width=3pt, line cap=round, dash pattern=on 0pt off 2\pgflinewidth]  (\x) -- (\y);} 

\foreach \x/\y in {x_1^{(1)}/x_2^{(1)},x_1^{(2)}/x_2^{(2)},x_1^{(3)}/x_3^{(2)},x_1^{(4)}/x_3^{(1)},x_2^{(3)}/x_3^{(3)},x_2^{(4)}/x_3^{(4)},
x_1^{(2m)}/x_4^{(2m+2n+2k-12)},x_1^{(9)}/x_4^{(2n+2k-3)},x_1^{(8)}/x_4^{(2n+2k-4)},x_1^{(7)}/x_4^{(2n+2k-5)},
x_1^{(6)}/x_4^{(2n+2k-6)},x_1^{(5)}/x_4^{(2n+2k-7)},x_2^{(2n)}/x_4^{(1)},x_2^{(2n-1)}/x_4^{(2)},x_2^{(2n-2)}/x_4^{(3)},x_2^{(2n-3)}/x_4^{(4)},
x_2^{(2n-4)}/x_4^{(5)},x_2^{(5)}/x_4^{(2n-4)},x_3^{(5)}/x_4^{(2n-3)},x_3^{(6)}/x_4^{(2n-2)},x_3^{(7)}/x_4^{(2n-1)},x_3^{(8)}/x_4^{(2n)},
x_3^{(9)}/x_4^{(2n+1)},x_3^{(2k)}/x_4^{(2n+2k-8)}}{\path[edge, dashed] (\x) -- (\y);}   

\draw[edge, dotted] plot [smooth,tension=1] coordinates{(x_1^{(3)}) (0,3) (x_4^{(1)})}; 
\draw[edge, dotted] plot [smooth,tension=1] coordinates{(x_1^{(2)}) (-1,5) (x_4^{(2m+2n+2k-12)})};   
\draw[edge, dotted] plot [smooth,tension=1] coordinates{(x_1^{(9)})  (x_4^{(2n+2k-5)})};   
\draw[edge, dotted] plot [smooth,tension=1] coordinates{(x_1^{(8)})  (x_4^{(2n+2k-6)})}; 
\draw[edge, dotted] plot [smooth,tension=0.5] coordinates{(x_3^{(3)}) (-1.5,-1) (x_4^{(2n+2k-7)})}; 
\draw[edge, dotted] plot [smooth,tension=1] coordinates{(x_3^{(2)}) (-4,-2.5) (x_4^{(2n+2k-8)})};
\draw[edge, dotted] plot [smooth,tension=1] coordinates{(x_3^{(9)}) (0,-9.5)(x_4^{(2n-1)})};
\draw[edge, dotted] plot [smooth,tension=1] coordinates{(x_3^{(8)}) (1,-8.5) (x_4^{(2n-2)})};
\draw[edge, dotted] plot [smooth,tension=1] coordinates{(x_2^{(2)}) (1,0) (x_4^{(2n-3)})};
\draw[edge, dotted] plot [smooth,tension=1] coordinates{(x_2^{(3)}) (5,-3) (x_4^{(2n-4)})};
\draw[edge, dotted] plot [smooth,tension=1] coordinates{(x_2^{(4)}) (5,-2) (7.5,-1.5)};
\draw[edge, dotted] plot [smooth,tension=1] coordinates{(x_4^{(5)}) (9,-1)};
\draw[edge, dotted] plot [smooth,tension=1] coordinates{(x_2^{(5)}) (7,-0.6)};
\draw[edge, dotted] plot [smooth,tension=1] coordinates{(8,0) (9.5,2) (x_4^{(4)})};
\draw[edge, dotted] plot [smooth,tension=1] coordinates{(x_2^{(2n-4)}) (9,3) (x_4^{(3)})};
\draw[edge, dotted] plot [smooth,tension=1] coordinates{(x_2^{(2n-3)}) (8,5) (x_4^{(2)})};
\draw[edge, dotted] plot [smooth,tension=0.5] coordinates{(x_1^{(7)})(-5.5,3) (-1,1) (x_3^{(4)})};
\draw[edge, dotted] plot [smooth,tension=0.5] coordinates{(x_1^{(6)})(-4,-3.3)(2,-5) (x_3^{(5)})};
\draw[edge, dotted] plot [smooth,tension=0.5] coordinates{(x_3^{(6)})(3.8,-4) (5,2.5) (x_2^{(2n)})};
\draw[edge, dotted] plot [smooth,tension=0.5] coordinates{(x_3^{(7)})(0,-6.5) (0,2) (x_2^{(1)})};
\draw[edge, dotted] plot [smooth,tension=0.5] coordinates{(x_1^{(5)}) (1,5.5) (x_2^{(2n-1)})};
\draw[edge, dotted] plot [smooth,tension=0.5] coordinates{(x_1^{(4)})(1,1) (5,1)(x_2^{(2n-2)})};
\draw[edge, dotted] plot [smooth,tension=0.5] coordinates{(x_4^{(2n+2k-4)})(-5,5.8)};
\draw[edge, dotted] plot [smooth,tension=0.5] coordinates{(x_4^{(2n+2k-3)})(-4.5,8.5)};
\draw[edge, dotted] plot [smooth,tension=0.5] coordinates{(x_1^{(2m)})(-4,5.5)};
\draw[edge, dotted] plot [smooth,tension=1] coordinates{(x_1^{(1)}) (-2,5.5)(-3.5,7.5)};
\draw[edge, dotted] plot [smooth,tension=1] coordinates{(x_4^{(2n)})(-2.8,-9) (-3.4,-6.5)};
\draw[edge, dotted] plot [smooth,tension=0.5] coordinates{(x_3^{(2k)}) (-3.4,-5.5)};
\draw[edge, dotted] plot [smooth,tension=0.5] coordinates{(x_4^{(2n+1)})(-5,-8)};
\draw[edge, dotted] plot [smooth,tension=0.5] coordinates{(x_3^{(1)})(-4,-4) (-4.6,-6)};
\end{tikzpicture}
\caption{Crystallization of $M\langle m,n,k \rangle$ for $m,n,k \geq 3$}\label{fig:mnk}
\end{figure}
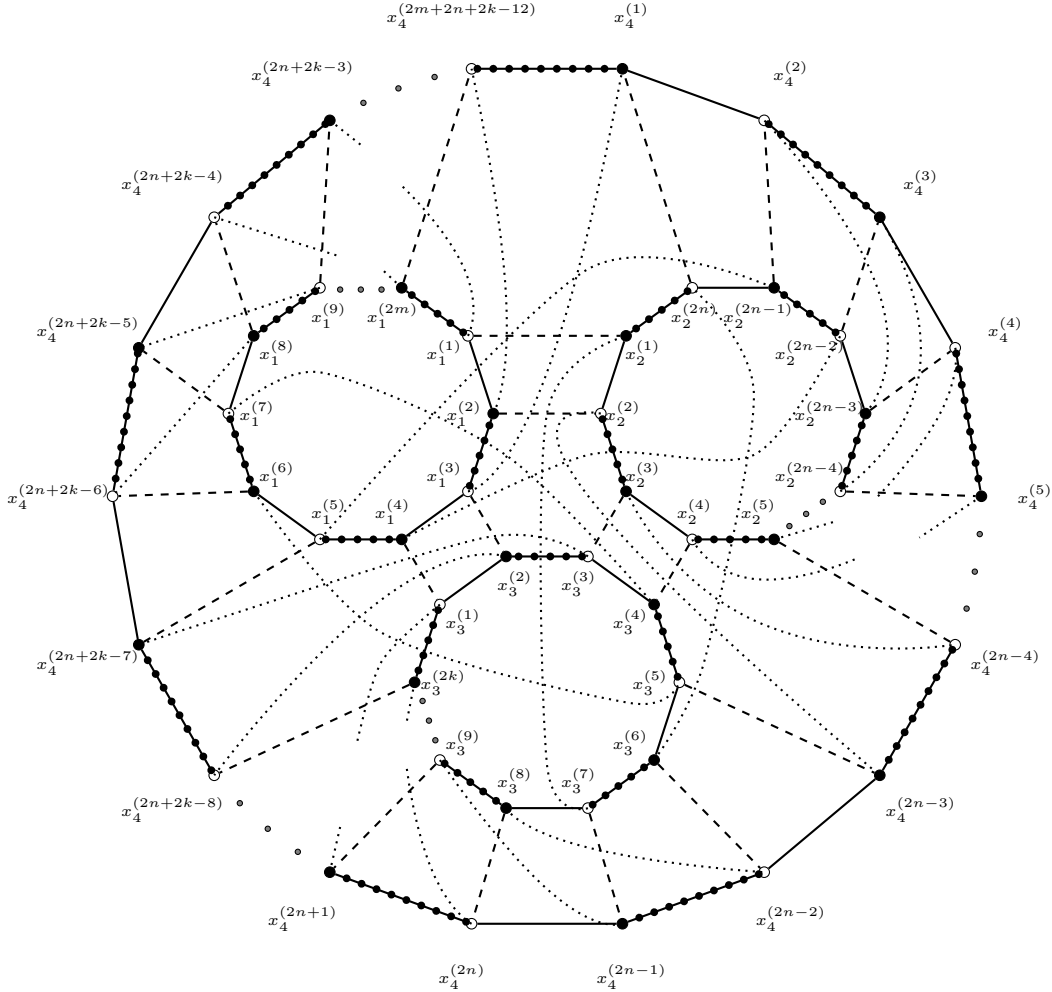
\begin{proof}
Since $C_{R_{mnk}} \neq \emptyset$ for $m,n,k \geq 3$, we can apply  Algorithm $2$. Let $\mathcal{R}= R_{mnk} \cup\{x_1^{m-2}x_3^{-1}x_2^{n-2}x_1^{-1}x_3^{k-2}x_2^{-1}\}$.
Thus, $m_{12}^{(2)}=m_{23}^{(2)}=m_{13}^{(2)}=m_{12}^{(3)}=m_{23}^{(3)}=m_{13}^{(3)}=2$,
$m_{14}^{(2)}=m_{14}^{(3)}=2m-4, m_{24}^{(2)}=m_{24}^{(3)}=2n-4$ and $m_{34}^{(2)}=m_{34}^{(3)}=2k-4$.
Again, we have 
$(n_1,n_2,n_3, n_4)=(2m,2n,2k,2(m+n-6))$ and  $G_i=C_{n_{i}}(x^{(1)}_i, \dots, x^{(n_i)}_i)$ for $1\leq i \leq 4$ as in Figure \ref{fig:mnk}. 
Choose $x_3^{(p)}= x_3^{(2)}$ as in Algorithm $2$, then the $3$-colored graph with the color set $\{0,1,2\}$ is as in Figure \ref{fig:mnk}, which is unique up to an isomorphism. 
For the choices $(q_1,q_2,q_3,q_4)=(4,2n-2,5,1)$ and $(x^{(q_1-1)}_1x^{(q_4)}_4, x^{(q_1+m_{12}^{(3)})}_1 x^{(q_3)}_3)= (x^{(3)}_1x^{(1)}_4, x^{(6)}_1 x^{(6)}_3)$, we get a  $4$-colored graph which yields 
$(\langle S \mid R_{mnk} \rangle, R_{mnk})$. Therefore, for each $m,n,k \geq 3$, we get 
a crystallization $(\Gamma, \gamma)$ of the $3$-manifold $M \langle m,n,k \rangle$.

Now, we show that the crystallization $(\Gamma, \gamma)$ is unique.
Here we choose the pair of colors $(2,3)=(i,j)$ as in the construction of $\tilde{r}$ for $r\in \mathcal{R}$ (cf. Eq. \eqref{tildar}). By similar arguments as in the proofs of  Theorems \ref{theorem:q4n} and \ref{theorem:mn2}, $x^{(2)}_2$,  $x^{(3)}_3$,  $x^{(3)}_1$  are the starting vertices to yield the relations
$x_1^{m-1}x_3^{-1}x_2^{-1}$, $x_2^{n-1}x_1^{-1}x_3^{-1}$, $x_3^{m-1}x_2^{-1}x_1^{-1}$ respectively. Therefore, as in the proofs of previous theorems,
$\{x^{(3)}_3 x^{(2n+2k-7)}_4$, $x^{(2)}_3 x^{(2n+2k-8)}_4,\dots, x^{(8)}_3 x^{(2n-2)}_4\}$,
$\{x^{(2)}_2 x^{(2n-3)}_4$, $ x^{(3)}_2 x^{(2n-4)}_4,\dots, x^{(2n-3)}_2 x^{(2)}_4\}$, 
$\{x^{(3)}_1 x^{(1)}_4$, $x^{(2)}_1 x^{(2m+2n+2k-12)}_4,\dots, x^{(8)}_1 x^{(2n+2k-6)}_4\} \subset \gamma^{-1}(3)$.
Again, $x^{(7)}_1 x^{(4)}_3, x^{(4)}_1 x^{(2n-2)}_2, x^{(1)}_2 x^{(7)}_3 \in$  $\gamma^{-1}(3)$ to yield the relation $x_1^{m-2}x_3^{-1}x_2^{n-2}x_1^{-1}x_3^{k-2}x_2^{-1}$.
Now, there is unique way to choose the remaining edges of color $3$ as in Figure \ref{fig:mnk}. 
Since $x_1^{m-2}x_3^{-1}x_2^{n-2}x_1^{-1}x_3^{k-2}x_2^{-1}$ is the only independent  element in $\overline{R_{mnk}}$ of minimum weight, the theorem follows.
\end{proof}

\begin{remark} \label{remark:minimal}
{\rm  By \cite[Proposition 4]{ca99}, the vertex-minimal crystallizations of prime and handle-free $3$-manifolds with at most $30$ vertices are known (cf. \cite{bccgm11,
cc08, li95}). Thus, our crystallizations of the $3$-manifolds  $M \langle m,2,2 \rangle$ for `$m \geq 3$' and $M \langle m,n,k \rangle$ for `$m \geq 4$
and $n,k \geq 3$' are minimal crystallizations when the number of vertices of the crystallizations are at most $30$. There are no known crystallizations in the literature of  
$M \langle m,2,2 \rangle$ for `$m \geq 3$' and of $M \langle m,n,k \rangle$ for `$m \geq 4$ and $n,k \geq 3$' which have less number of vertices than our constructed ones.
}
\end{remark}

\noindent {\bf Acknowledgement:} The author would like to thank Paola Bandieri and Basudeb Datta for helpful comments. Furthermore, the author would also like to thank the anonymous referees for insightful and helpful remarks.
The author 
is supported by CSIR, India for SPM Fellowship and the UGC Centre for Advanced Studies. 

{\footnotesize

}

\end{document}